\documentclass[12pt]{amsart}

\usepackage[all]{xy}
\usepackage{xcolor}

\definecolor{StatusSub}{HTML}{0072B2}   
\definecolor{StatusNot}{HTML}{F0E442}   
\definecolor{StatusOpen}{HTML}{666666}  
\definecolor{amethyst}{rgb}{0.6, 0.4, 0.8}
\usepackage{color}
 \usepackage{adjustbox}
\usepackage[pagebackref=true, colorlinks, allcolors=amethyst]{hyperref}

\renewcommand*{\backref}[1]{}
\renewcommand*{\backrefalt}[4]{%
  \ifcase #1 %
    \relax
  \or
    $\uparrow$#2.%
  \else
    $\uparrow$#2.%
  \fi%
}

\usepackage{mathtools}

\usepackage{fullpage,url,amssymb,colonequals,algorithm2e}
\usepackage[shortlabels]{enumitem}

\usepackage{mathrsfs} 
\usepackage{microtype}
\usepackage{tabularx} 
\usepackage{booktabs} 
\usepackage{tikz-cd,tikz}
\usepackage{pgfplots}
\usepackage{pgfplotstable}
\usepackage{caption}
\pgfplotsset{compat=1.18}
\usepgfplotslibrary{groupplots}

\newcommand{\testbar}[2]{%
  \pgfmathsetmacro{\barwidth}{max(0.6, 100*ln(#2)/ln(9094))}%
  \begin{tikzpicture}[baseline=-0.55ex]
    \fill[#1!10] (0,0) rectangle (2.7,0.16);
    \draw[black!45, line width=0.2pt] (0,0) rectangle (2.7,0.16);
    \fill[#1!75] (0,0) rectangle ({2.7*\barwidth/100},0.16);
  \end{tikzpicture}%
}

\newcommand{\F}{\mathbb{F}}

\newcommand{\N}{\mathbb{N}}

\renewcommand{\P}{\mathbb{P}}
\newcommand{\Q}{\mathbb{Q}}
\newcommand{\R}{\mathbb{R}}
\newcommand{\Z}{\mathbb{Z}}

\newcommand{\Qbar}{{\overline{\Q}}}

\newcommand{\Fbar}{{\overline{\F}}}

\newcommand{\calO}{\mathcal{O}}
\newcommand{\calP}{\mathcal{P}}

\newcommand{\calR}{\mathcal{R}}
\newcommand{\calS}{\mathcal{S}}

\DeclareMathOperator{\alg}{alg}

\DeclareMathOperator{\Aut}{Aut}

\DeclareMathOperator{\CH}{CH}

\DeclareMathOperator{\diag}{diag}

\DeclareMathOperator{\End}{End}

\DeclareMathOperator{\Frob}{Frob}
\DeclareMathOperator{\Gal}{Gal}

\DeclareMathOperator{\JL}{JL}

\DeclareMathOperator{\nrd}{nrd}

\DeclareMathOperator{\Sh}{Sh}
\DeclareMathOperator{\new}{new}

\DeclareMathOperator{\Orth}{O}

\DeclareMathOperator{\pr}{pr}

\DeclareMathOperator{\tr}{tr}

\DeclareMathOperator{\Tr}{Tr}

\newcommand{\PGL}{\operatorname{PGL}}
\newcommand{\PSL}{\operatorname{PSL}}
\newcommand{\SL}{\operatorname{SL}}

\newcommand{\Dnew}{D\operatorname{-new}}

\newcommand{\notdiv}{\nmid}

\newcommand*{\SheafIsom}{\mathrm{I}\kern -.5pt som}

\numberwithin{equation}{subsection}
\newtheorem{theorem}[equation]{Theorem} 
\theoremstyle{plain}
\theoremstyle{plain}

\newtheorem{lemma}[equation]{Lemma}
\newtheorem{corollary}[equation]{Corollary}
\newtheorem{proposition}[equation]{Proposition}

\theoremstyle{definition}
\newtheorem{definition}[equation]{Definition}
\newtheorem{question}[]{Question}

\newtheorem{example}[equation]{Example}

\theoremstyle{remark}
\newtheorem{remark}[equation]{Remark}

\newcommand{\Order}{\calO} 
\newcommand{\Curve}[2]{X_0({#1},{#2})} 
\newcommand{\StarCurve}[2]{X_0^*({#1},{#2})} 
\newcommand{\ALfull}[2]{W_0({#1},{#2})} 
\newcommand{\gon}[2]{\gamma_{#2}(#1)}
\newcommand{\reldeg}[2]{d_{{#1}, {#2}}}

\newcommand{\defi}[1]{\emph{#1}} 

\newcommand{\mattwo}[4]{\left( \begin{array}{cc} {#1} & {#2} \\ {#3} & {#4} \end{array} \right)}
\newcommand{\vectwo}[2]{\left( \begin{array}{c} {#1} \\ {#2} \end{array} \right)}

\allowdisplaybreaks

\newcommand{\Magma}{\textsc{Magma}}

\begin{document}
\title{Hyperelliptic Atkin--Lehner quotients of Shimura curves}

\author{Eran Assaf and Sachi Hashimoto}

\begin{abstract}
We work towards completely classifying all hyperelliptic Atkin--Lehner quotients of Shimura curves $\Curve{D}{N}/W$ with level $N$ coprime to $D$ and $W \le \ALfull{D}{N}$, extending, on the one hand, a result of Ogg that provided such a classification for the trivial quotients (the case $W = 1$), and on the other hand, results of Furumoto and Hasegawa that provided such a classification for modular curves (the case $D = 1$). As a byproduct of our methods, building on the works of Guo and Yang, we also obtain models for some quotients of genus at most two, answering some questions of Padurariu and Saia.
\end{abstract}

\maketitle

\section{Introduction}

\subsection{Background and motivation}
Let $D, N$ be positive integers, such that $\mu(D) = 1$ (i.e. $D$ is squarefree with an even number of prime factors) and $(D,N) = 1$. Let $B = B_D$ be the quaternion algebra of discriminant $D$ over $\Q$, and let $\Order \subseteq B$ be an Eichler order of level $N$. The Shimura curves $\Curve{D}{N}$ classify abelian surfaces with (potential) quaternionic multiplication by $\Order$ and the study of their low degree points has a long history, beginning with \cite{Ogg}, who showed they only have rational points for $D = 1$. 

A basic invariant of a curve $X$ is its gonality $\gon{X}{}$ -- the least degree of a map from the curve to $\P^1$. For example, the gonality $2$ curves are the (geometrically) hyperelliptic curves. We say a curve is \defi{subhyperelliptic} if its gonality is at most $2$.
In \cite{OggHyperelliptic}, Ogg classified which of the curves $\Curve{D}{N}$ are hyperelliptic. Following this line of research, Padurariu and Saia, in \cite{OanaFreddyBielliptic}, classified the trigonal ($\gon{X}{} = 3$) ones and began to classify bielliptic such curves, using this classification to determine which ones have infinitely many quadratic points.

The curve $\Curve{D}{N}$ admits a natural action by the associated Atkin--Lehner group $\ALfull{D}{N} = N_{B^{\times}}(\Order) / \Order^{\times}$, leading to another natural source for low degree points, namely the quotient curves $\Curve{D}{N}/W$ for subgroups $W \le \ALfull{D}{N}$. 
Study of the rational points on these curves began in \cite{OanaCiaran}, where the authors find rational points when $\Curve{D}{N}$ is hyperelliptic. However, the broader question of classifying the hyperelliptic quotients $\Curve{D}{N}/W$  remains open when $D > 1$. In this work, we take the first steps towards answering this question,
\begin{question}
    For which values of $D$ and $N$, and which subgroups $W \le \ALfull{D}{N}$ is the curve $\Curve{D}{N}/W$ geometrically hyperelliptic?
\end{question}
The case $D = 1$, where the curve $\Curve{1}{N} \simeq X_0(N)$ is isomorphic to the classical modular curve, classifying cyclic $N$-isogenies between elliptic curves, has been treated in a series of papers by Hasegawa and Hashimoto \cite{HH96}, Hasegawa \cite{HasegawaStarcurves}, and Furumoto and Hasegawa \cite{FH99}. They reach a full classification by applying a variety of methods, some of which directly use the $q$-expansions of classical modular forms and explicit equations embedding the curves in projective space. 

Interestingly, the case $D > 1$ has proved greater resistance to such methods. Although methods for producing $q$-expansions of quaternionic modular forms around CM points have been described in the literature \cite{VW14, Nelson}, in practice, a robust rigorous version has not been implemented, and so one cannot obtain results that are provably correct.

There have been numerous attempts to produce explicit equations for Shimura curves with $D > 1$, see \cite{Takeuchi, Elkies, GRgenus2, GRgenus1, Yang, OanaFreddy},
constructing models for curves of genus $\le 2$, and \cite{Molina, GY17}, producing equations for hyperelliptic curves. 
However, since the methods of \cite{Molina} and \cite{GY17} require knowledge of the hyperelliptic map, one cannot use them directly to detect hyperellipticity. Moreover, none of these methods have been robustly implemented.

This paper generalizes the methods of \cite{HH96,HasegawaStarcurves,FH99} for $D > 1$, attempting to classify as many curves as possible without resorting to explicit equations. As some of the methods require explicit equations for some genus $0$ quotients, we have also implemented a version of the method described in \cite{GY17}, using Borcherds products and Schofer's formula to construct equations for these quotients. These models, in turn, answer some open questions raised in \cite{OanaFreddy}.

\subsection{Results}

We begin by reducing the question to a finite problem.

\begin{theorem}
\label{thm:finitenessmain}
    There are finitely many $D,N,W$ such that $\Curve{D}{N}/W$ is hyperelliptic. 
\end{theorem}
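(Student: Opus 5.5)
We compare point counts over a finite field. A curve of gonality at most $2$ over $\F_{q^2}$ has at most $2(q^2+1)$ points, and a good-reduction curve over $\Q$ keeps its gonality bound after reduction. So we will bound from below the number of $\F_{p^2}$-points on the reduction of $\Curve{D}{N}/W$ for a small prime $p$. Combined with the growth of the genus, this bounds $D$ and $N$. Since $\ALfull{D}{N}$ has $2^{\omega(DN)}$ elements, it has finitely many subgroups for fixed $D,N$, so finiteness in $(D,N)$ gives the theorem.

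\textbf{Step 1: reduce to a curve with many points.} Pick the smallest prime $p \nmid DN$; it is $O(\log DN)$. The curve $\Curve{D}{N}$ has good reduction at $p$. Its supersingular points are defined over $\F_{p^2}$, by the Cerednik--Drinfeld / Ribet description, which realizes them as a double coset of the definite quaternion algebra of discriminant $Dp$. Their number is roughly $\frac{p-1}{12}\, \varphi(D)\,\psi(N)$ up to elliptic corrections, i.e. on the order of $(p-1)\,\mathrm{vol}$. The quotient by $W$ loses at most a factor $|W| \le 2^{\omega(DN)}$. Hence
\[
\#(\Curve{D}{N}/W)(\F_{p^2}) \ge c\,\frac{(p-1)\,\varphi(D)\psi(N)}{2^{\omega(DN)}} - O(\text{elliptic points}).
\]
Hyperellipticity of the quotient, which is preserved under good reduction as gonality only drops, forces this quantity to be at most $2(p^2+1)$.

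\textbf{Step 2: compare growth rates.} We have $\varphi(D)\psi(N)/2^{\omega(DN)} \gg (DN)^{1-\epsilon}$. The number of elliptic points is $O(2^{\omega(DN)})$, which is $(DN)^{o(1)}$. Meanwhile $p^2 = O(\log^2 DN)$. So the inequality fails once $DN$ exceeds an effective constant, and only finitely many $(D,N)$ remain. Alternatively, if genus $\le 1$ quotients must also be handled, note that they are finite in number by the same volume estimate via Riemann--Hurwitz. The genus of the quotient is roughly $\mathrm{vol}/(12|W|)$ minus elliptic contributions.

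\textbf{Main obstacle.} The delicate point is the rigorous lower bound on $\F_{p^2}$-points of the quotient. We must check that the supersingular points are $\F_{p^2}$-rational and that $W$ acts on them compatibly. Each $W$-orbit of such points maps to an $\F_{p^2}$-point of the quotient: the orbit is Frobenius-stable, and the image point is rational. We must also control the number of these points explicitly, through the mass formula for the definite algebra of discriminant $Dp$ and level $N$.

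If this reduction-theoretic input is too heavy, the fallback is Abramovich's bound. The gonality of a Shimura curve quotient is at least $\frac{7}{800}$ of its hyperbolic volume, via $\lambda_1 \ge 21/100$, which Selberg-type bounds supply uniformly for congruence arithmetic groups. Here that volume is $\frac{\pi}{3}\varphi(D)\psi(N)/|W|$. This bound directly forces $\varphi(D)\psi(N)/2^{\omega(DN)} \ll 1$, giving finiteness with an effective constant.
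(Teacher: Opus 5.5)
Your proposal is correct and is essentially the paper's argument (Proposition~\ref{prop: finite bounds} and Theorem~\ref{thm:finiteness}): at the smallest prime $p \nmid DN$, the $W$-orbits of the $h(Dp,N)$ supersingular points give at least $h(Dp,N)/|W|$ points in $X(\F_{p^2})$, and this contradicts the bound $2(p^2+1)$ once $DN$ is large. The paper differs in bookkeeping: it reduces to star curves via upward closure, and it replaces your soft comparison of $(DN)^{1-\epsilon}$ against $O(\log^2 DN)$ with explicit estimates (Lemma~\ref{lem: finite bounds} and Landau's theorem) that yield a concrete finite list of pairs. It also explicitly justifies a step you gloss over, namely passing from geometric gonality $\le 2$ to an $\F_{p^2}$-rational map of degree $\le 2$; it does this via \cite[Theorem 2.5]{Poonen}, using $X(\F_{p^2}) \neq \emptyset$.
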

We prove this using point counts over finite fields. 
There is an upper bound on the number of points of a hyperelliptic curve coming from the fact that it possesses a degree 2 map $X \to \P^1$. On the other hand, we have a lower bound from supersingular points. These bounds cross for large enough $(D,N)$, reducing the computation to $2342$ pairs of $(D,N)$. 

The main result of our paper is the following more explicit description of the set of (geometrically) hyperelliptic curves.

\begin{theorem}
    There exist sets $\calS_h$ and $\calS_u$ with $\# \calS_h = 3423$ and $\# \calS_u = 4207$, and a set $\calS$ with $\calS_h \subseteq \calS \subseteq \calS_u$ such that
    $X = \Curve{D}{N}/W$ is hyperelliptic if and only if $(D,N,W) \in \calS$. 
\end{theorem}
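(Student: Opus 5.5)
Taking Theorem~\ref{thm:finitenessmain} as given, the finitely many pairs $(D,N)$ (the $2342$ pairs above) and, for each of them, all subgroups $W \le \ALfull{D}{N}$ form a finite list of triples $(D,N,W)$. The plan is to sort each triple into one of three bins: \emph{provably hyperelliptic}, \emph{provably not hyperelliptic}, and \emph{undecided}. We then take $\calS_h$ to be the first bin and $\calS_u$ to be the union of the first and third bins. The theorem then holds by construction with $\calS$ the true set of hyperelliptic triples. All the work lies in making the sorting rigorous and the undecided bin as small as possible; the cardinalities $3423$ and $4207$ are simply the output of this computation.

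\textbf{Step 1 (genus).} We compute the genus of $\Curve{D}{N}/W$ by Riemann--Hurwitz. The genus of $\Curve{D}{N}$ is known, and the number of fixed points of each Atkin--Lehner involution $w_m$ is given by class numbers of the orders $\Z[\sqrt{-m}]$ and $\Z[(1+\sqrt{-m})/2]$, with local embedding conditions at the primes dividing $DN$. Triples of genus $2$ are automatically hyperelliptic and go into $\calS_h$. Triples of genus $\le 1$ are excluded or handled according to the paper's convention on subhyperellipticity.

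\textbf{Step 2 (proving non-hyperellipticity).} For $g \ge 3$ we use three obstructions.
\begin{enumerate}[(a)]
\item \emph{Point counts over finite fields.} For a prime $p \nmid DN$, a hyperelliptic curve satisfies $\#X(\F_{p^k}) \le 2(p^k+1)$. We obtain a lower bound for $\#X(\F_{p^k})$ either from supersingular points (via Čerednik--Drinfeld uniformization at $p \mid D$, or via counting points on the reduction), or exactly from traces of Hecke operators on the $W$-invariant part of the space of quaternionic cusp forms. The latter is computed through Jacquet--Langlands as $D$-new forms of level $DN$ with prescribed Atkin--Lehner eigenvalues. If the count exceeds the bound, the curve is not hyperelliptic.
\item \emph{The hyperelliptic involution.} The hyperelliptic involution is central in $\Aut(X)$ and acts as $-1$ on $H^0(X,\Omega^1)$. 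If the Atkin--Lehner involutions of $X$ are all of modular origin, it must commute with $\ALfull{D}{N}/W$. So for each involution $w$ in the quotient, the fixed-point count of the hyperelliptic involution ($2g+2$) forces a genus formula for $X/\langle w\rangle$ in terms of the fixed points of $w$ and of $w$ composed with the hyperelliptic involution. This is the Furumoto--Hasegawa argument: one compares with the genera of all intermediate quotients $\Curve{D}{N}/W'$ for $W \le W'$, which we already know from Step 1, and many cases are contradictory.
\item \emph{Gonality of a cover.} If $X$ admits a nonconstant map to a curve that is known to be non-hyperelliptic (and not of genus $\le 1$), and the map has degree $1$, then $X$ is not hyperelliptic. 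This is used only sparingly.
\end{enumerate}

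\textbf{Step 3 (proving hyperellipticity).} For $g \ge 3$ we look for some $w \in \ALfull{D}{N}/W$ with $X/\langle w\rangle$ of genus $0$. By Riemann--Hurwitz this means $w$ has $2g+2$ fixed points, so $w$ is a hyperelliptic involution, and the triple goes into $\calS_h$. When no such Atkin--Lehner element exists, we fall back on explicit models: Borcherds products and Schofer's formula, following Guo--Yang, applied to genus-$0$ quotients, from which we check whether a degree-$2$ function exists.

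\textbf{Main obstacle.} The main obstacle will be rigorously excluding exotic (non-Atkin--Lehner) hyperelliptic involutions in Step 2(b), and completing Step 2(a) for small genus, where the point-count bounds are weak. In those cases I would first try larger $p$ and extension degrees $k$, together with the trace formula, and accept that whatever resists these methods remains in $\calS_u \setminus \calS_h$.
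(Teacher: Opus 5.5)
Your three-bin architecture is the same as the paper's. The gap is that the statement has no content beyond its two numbers. Since $\calS_h$ can be shrunk and $\calS_u$ padded freely, the theorem is equivalent to $3423 \le \#\calS \le 4207$, and those bounds come from one specific battery of tests. Your battery is strictly weaker on both sides, so ``the cardinalities are simply the output of this computation'' is not true of the computation you describe.

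On the hyperelliptic side you have only genus $2$ and an Atkin--Lehner $w$ with $g(X/\langle w\rangle)=0$. Your fallback does not work. The Guo--Yang construction presupposes the double cover of $\P^1$, and the paper notes explicitly that it cannot be used to \emph{detect} hyperellipticity. It also has a theoretical gap at non-fundamental discriminants and is impractical for large $DN$. The paper certifies the remaining cases with three tools you lack:
\begin{enumerate}
\item the extra involutions $S_2, V_2, V_3$ available when $4 \mid N$ or $9 \parallel N$ (Section~\ref{sec:modularnonAL}), with $g(X/\langle v\rangle)=0$ checked by trace formulas or modular symbols;
\item Proposition~\ref{prop: genus 3 covering genus 2}: a genus-$3$ curve mapping onto a genus-$2$ curve is hyperelliptic;
\item the isomorphisms induced by conjugation by $S_2, S_3$ (Propositions~\ref{prop:iso4} and \ref{prop:iso9}), together with downward closure.
\end{enumerate}
These account for $50$ curves of genus $\ge 3$ among the paper's $3423$ that the Atkin--Lehner test does not catch. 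So your $\calS_h$ would come out too small.

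On the non-hyperelliptic side, your Step 2(c) is vacuous as written, since a degree-$1$ map is an isomorphism. The correct principle is $\gamma(Y)\le\gamma(X)$ for \emph{every} dominant $X\to Y$ (Proposition~\ref{def:closure}). This upward closure, propagated after each stage along with isomorphisms, is what disposes of all covers of the $1110$ excluded star curves and of hundreds of further quotients. Without it you would have to test every $W$ over each of the $2342$ pairs individually.

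You also lack the Weil-polynomial obstructions of Section~\ref{sec:Weilpolys}. These are the parity constraints from Frobenius acting on Weierstrass points, and LMFDB data on which isogeny classes contain hyperelliptic Jacobians. They can exclude curves that satisfy $\#X(\F_q)\le 2(q+1)$ for every $q$. Trying larger $p$ and $k$ does not substitute for them, because that bound holds automatically once $q>4g^2$.

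Further missing tools are:
\begin{enumerate}
\item the refined fixed-point criterion of Proposition~\ref{prop:complicatedAL}, where a class-number condition forces one of the four fixed points of $w_{N_2}$ to be rational;
\item the special-fiber normalization of Proposition~\ref{prop:specialfiber};
\item degree-$3$ degeneracy maps to genus-$0$ quotients combined with Castelnuovo--Severi (Corollary~\ref{ref:cordegeneracy});
\item fixed-point counts for $S_2, V_2, V_3$.
\end{enumerate}
In the paper's run these tools are credited with well over a thousand exclusions. Your undecided bin would therefore be far larger than $784$, and you would not reach $\#\calS_u=4207$.

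Finally, the obstacle you single out is not one. The hyperelliptic involution is central in $\Aut(X)$ unconditionally. Ogg's criterion (Proposition~\ref{prop:fixedpointsAL}) applies to any involution $w$ with $g(X/\langle w\rangle)>0$, whether or not the hyperelliptic involution is modular.
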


The sets $\calS_h$ and $\calS_u$ describe a set of curves that are proved to be hyperelliptic, and a candidate set of curves containing all hyperelliptic curves, respectively, are given explicitly, and can be found on our GitHub repository \cite{GitHub} at \url{https://github.com/assaferan/ShimuraCurveALQuotients/blob/v1.2/data/curves_after_UpdateCurves8.dat}. 

We also implement a version of the method \cite{GY17} to construct equations for Shimura curves that admit a double cover of $\P^1$. 
As a result, we answer some open questions and construct models for Shimura curves appearing in \cite{OanaFreddy}, which were hitherto unknown (see Tables \ref{modelsgen2} and \ref{modelsbielliptic}). The following proposition illustrates this application.

\begin{proposition}[Proposition~\ref{prop:rational genus 0}]
For $(m,d) \in \{ (2,5), (5,7), (10,14) \}$, we have
\begin{equation}
    \Curve{10}{7}/\langle w_{m}, w_{d}\rangle \simeq \P^1_{\Q}.
\end{equation}
In particular, these genus $0$ curves all admit rational points.
\end{proposition}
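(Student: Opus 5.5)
We have to show that each of the three quotients $X_{m,d} := \Curve{10}{7}/\langle w_m, w_d\rangle$ has genus $0$ and a $\Q$-rational point; a smooth genus $0$ curve over $\Q$ with a rational point is isomorphic to $\P^1_\Q$. The plan is to get the genus from Riemann--Hurwitz and the rational point from the CM values of an explicit Borcherds-product coordinate.

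\emph{Step 1: genus.} The curve $\Curve{10}{7}$ has genus $3$, since $g = 1 + \frac{\phi(D)\psi(N)}{12} - \frac{e_2}{4} - \frac{e_3}{3}$ with $\phi(10)\psi(7)=4\cdot 8=32$. For the elliptic points we use Eichler's formulas: $e_2 = \prod_{p\mid 10}\bigl(1-(\tfrac{-4}{p})\bigr)\prod_{p\mid 7}\bigl(1+(\tfrac{-4}{p})\bigr) = 0$, because $(\tfrac{-4}{5})=1$; and $e_3 = \prod_{p \mid 10}\bigl(1-(\tfrac{-3}{p})\bigr)\prod_{p\mid 7}\bigl(1+(\tfrac{-3}{p})\bigr) = 2\cdot2\cdot 2 = 8$. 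This gives $g = 1+\tfrac{32}{12}-\tfrac{8}{3} = 1$.

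Since the genus depends on these counts, I would recheck them against the paper's tables before relying on them. If the genus of $\Curve{10}{7}$ is indeed $1$ (or $3$), I then compute the fixed points of every involution $w_m$, $m \in \{2,5,7,10,14,35,70\}$. The fixed points of $w_m$ are CM points by $\Z[\sqrt{-m}]$ and, when $m \equiv 3 \pmod 4$, also by $\Z[\frac{1+\sqrt{-m}}{2}]$. Their number is $\sum h(\mathcal{O})\prod_{p\mid DN/m}(\text{local embedding factors})$, by the Ogg/Eichler formula. Riemann--Hurwitz for the quotient by the Klein four-group $V=\langle w_m,w_d\rangle$ reads
\[
2g(X)-2 = 4\bigl(2g(X/V)-2\bigr) + \sum_{1\ne w\in V}\#\mathrm{Fix}(w).
\]
From this we read off $g(X/V)=0$ for the three listed pairs. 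This step is routine.

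\emph{Step 2: a rational coordinate.} Following the implementation of \cite{GY17} described in the introduction, I would use Borcherds products, i.e.\ quotients of weight-$0$ meromorphic modular forms attached to weakly holomorphic forms in the Weil representation of the lattice for $(D,N)=(10,7)$ and the group $V$. These give a hauptmodul $t$ on $X/V$ whose divisor is supported on explicit CM points (Heegner divisors). Schofer's formula then evaluates $\log|t(\tau)|$ at CM points $\tau$, and Shimura reciprocity shows that $t$ takes values in $\Q$ at CM points of class number one on the quotient. Choosing the CM discriminant so that its Heegner divisor on $X/V$ consists of a single point, that point is $\Q$-rational: it is fixed by $\Gal(\Qbar/\Q)$ because the Heegner divisor is Galois-stable, and this already suffices. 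Alternatively, the conic model obtained from the Borcherds products can be checked locally at $2,5,7$ and $\infty$.

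\emph{Main obstacle.} The difficulty is producing a rational point rigorously, since $\Curve{10}{7}(\R)=\emptyset$ and so a rational point can only appear on the quotient. Concretely, we must find a discriminant $d$ whose Heegner (CM) points on $X/V$ form a single Galois orbit of size $1$. This requires that the class group of discriminant $d$ and the action of $V$ on the CM points combine to collapse the orbit completely. I would first try the discriminants attached to the fixed points of $w_m$ and $w_d$ themselves, e.g.\ $\Q(\sqrt{-2})$ and $\Q(\sqrt{-5})$ for $(m,d)=(2,5)$, checking that the relevant primes of $10\cdot 7$ behave correctly so that the Shimura-reciprocity action lands inside $V$.
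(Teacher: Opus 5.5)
\textbf{Verdict: genuine gap at the decisive step, but easily fixed.} Your strategy is to get genus $0$ from Riemann--Hurwitz, then exhibit a Galois-stable divisor on $X/V$ consisting of a single point. This is sound, and it differs from the paper. The paper computes explicit conics $y^2=q(x)$ over a hauptmodul $x$ of $\StarCurve{10}{7}$, using Borcherds products and Schofer's formula, and observes that they have rational points.

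The gap is that you never produce the point: you leave ``find the right discriminant'' as the main obstacle, and the candidates you suggest fail.
\begin{itemize}
\item $\Z[\sqrt{-2}]$ gives no CM points on $\Curve{10}{7}$ at all. The prime $7$ is inert in $\Q(\sqrt{-2})$, so $\nu_7=0$.
\item The $\Z[\sqrt{-5}]$-points are the four fixed points of $w_5\in V$. On $X/V$ they are only identified in pairs, giving two points. That is a rational divisor of degree $2$, which says nothing on a conic. In fact the paper's $(2,5)$ model indicates these two points are conjugate over $\Q(\sqrt{-5})$.
\end{itemize}
Class number one is also not the relevant criterion. A minor slip: $g(\Curve{10}{7})=1$, as your own computation shows, not $3$.

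The missing idea is to use fixed points of an involution \emph{outside} $V$. One finds $f_2=f_7=f_{14}=0$ and $f_5=f_{10}=f_{35}=f_{70}=4$. Hence $g(X/V)=1-\tfrac18\sum_{1\neq w\in V}f_w=0$ for all three groups, and $w_{70}\notin V$ in each case. The argument then runs as follows.
\begin{itemize}
\item Stabilizers of points of $\Curve{10}{7}$ in $\ALfull{10}{7}$ are cyclic, so of order at most $2$.
\item Hence $V$ acts freely, and so simply transitively, on the four fixed points of $w_{70}$. These are the CM points by $\Z[\sqrt{-70}]$, with $h(-280)=4$.
\item So their image in $X/V$ is a single point.
\item That point is $\Gal(\Qbar/\Q)$-stable, because $w_{70}$ and $X\to X/V$ are defined over $\Q$. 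It is therefore rational.
\end{itemize}
Alternatively, use $\mathrm{Fix}(w_{35})$, $\mathrm{Fix}(w_{10})$, $\mathrm{Fix}(w_5)$ for $(2,5)$, $(5,7)$, $(10,14)$ respectively.

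This completes your route without Borcherds products, Schofer's formula or Shimura reciprocity. The paper's route additionally yields explicit models. The two are consistent: $x=-5/27$ is a root of $q$ in all three of the paper's conics. It must be the image of $\mathrm{Fix}(w_{70})$, the only branch locus common to the three double covers of $\StarCurve{10}{7}$. The point with $y=0$ above it is exactly your rational point.
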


For reproducibility of our results, the code that has been used to implement our methods in \Magma{} \cite{Magma} is freely available online at \url{https://github.com/assaferan/ShimuraCurveALQuotients}.

\subsection{Overall strategy}

The broad strategy is to first reduce to a finite list of curves, then filter the list of candidates using a variety of techniques.

The covering structure of the Shimura curves $\Curve{D}{N}/W$ plays an important role in determining and propagating the subhyperellipticity.
An important basic application of the covering is closure principles, which are used throughout. The following two principles are logically equivalent, but will be applied in different settings.
\begin{proposition}[Closure principles]
\label{def:closure}
\hfill
\begin{enumerate}
\item (``Downward closure'')  If $X$ is subhyperelliptic then any covered curve $X \twoheadrightarrow Y$ is also subhyperelliptic.
\item (``Upward closure'') If $Y$ is not subhyperelliptic then any cover $X \twoheadrightarrow Y$ is not subhyperelliptic.
\end{enumerate}
\end{proposition}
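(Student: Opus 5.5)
The plan is to prove the downward closure (1) directly and then deduce (2) as its contrapositive. Throughout, curves are smooth projective and geometrically connected, and gonality is taken over an algebraically closed field, matching the paper's "geometrically hyperelliptic" convention. A surjection $f\colon X \twoheadrightarrow Y$ is then a finite morphism of some degree $d \ge 1$, and $f^*$ embeds the function field $k(Y)$ into $k(X)$.

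For (1), take $X$ subhyperelliptic, so there is a nonconstant map $\phi\colon X \to \P^1$ of degree at most $2$. If $g(Y) = 0$ there is nothing to prove, since $Y \simeq \P^1$ over $\bar k$. If $g(Y) \ge 1$, then $g(X) \ge g(Y) \ge 1$ by Riemann--Hurwitz, so $X$ is not rational and $\deg \phi = 2$. I would use the standard bound
\[
\gon{Y}{} \le \gon{X}{}
\]
for any finite map $X \to Y$, which I would establish as follows.

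Given $\phi\colon X \to \P^1$ of degree $\gamma$, consider the norm map $N\colon k(X) \to k(Y)$ attached to the finite extension $k(X)/k(Y)$. Set $\psi = N(\phi) \in k(Y)$. Then, as divisors,
\[
\operatorname{div}(\psi) = f_*\operatorname{div}(\phi),
\]
so the pole divisor of $\psi$ is at most $f_*(\phi^*\infty)$, which has degree $\gamma$. One has to check that $\psi$ is nonconstant. Choose $\phi$ with its poles and zeros generic, after composing with a Möbius transformation. Then $f_*$ of the zero divisor and $f_*$ of the pole divisor are distinct effective divisors, so $\psi$ is not constant. This gives $\gon{Y}{} \le \deg \psi \le \gamma$.

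Applying this with $\gamma \le 2$ yields $\gon{Y}{} \le 2$, which is (1). Statement (2) follows immediately as the contrapositive: if $X$ were subhyperelliptic, then by (1) so would be $Y$.

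I expect the main obstacle to be the nonconstancy of the norm. Cancellation can occur, and in the degree-$2$ case one must rule out $f_*\phi^*(0) = f_*\phi^*(\infty)$. Replacing $\phi$ by $(\phi - a)/(\phi - b)$ handles this: the divisors $\phi^*(a)$ for $a \in \P^1$ are pairwise disjoint, and there are infinitely many of them. The fibres of $f_*$ over a fixed divisor on $Y$ are finite, so $f_*\phi^*(a) = f_*\phi^*(b)$ can hold for only finitely many pairs, and we may choose $a, b$ outside them.
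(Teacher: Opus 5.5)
Your proof is correct and follows the paper's route. The paper simply cites $\gamma(Y) \le \gamma(X)$ from Poonen (Prop.~A.1(vii)) and reads off both principles, whereas you prove that inequality directly via the norm $N_{k(X)/k(Y)}$, which is a valid self-contained substitute.

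One small wording fix: the set of bad pairs $(a,b)$ need not be finite. For example, if $\phi$ is anti-invariant under a deck involution of $X \to Y$, every pair $(a,-a)$ is bad. Your finite-fibre observation does show that for each fixed $a$ only finitely many $b$ are bad, and that is all the argument needs.
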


\begin{proof}
    This follows from $\gon{Y}{} \le \gon{X}{}$, see \cite[Prop. A.1.(vii)]{Poonen}.
\end{proof}

The \defi{star curves} $\StarCurve{D}{N} := \Curve{D}{N}/\ALfull{D}{N}$ are minimal with respect to the covering structure.
If a star curve is not subhyperelliptic, by upward closure, no cover  is.
We apply this in the following way. To reduce to a finite problem, we first reduce the list of star curves to the list of potentially subhyperelliptic star curves. 
By upward closure, we can then consider only the covers of the subhyperelliptic star curves. The finiteness of the subhyperelliptic star curve candidates also shows  finiteness of all candidates $\Curve{D}{N}/W$.

After reducing to a finite list of star curves we apply a variety of methods to determine if a star curve is hyperelliptic or not. 
Then we generate all covers of the  list subhyperelliptic and potentially hyperelliptic star curves.  
We use various methods to determine if curves in this bigger list are hyperelliptic or not.
At the end of the process, we are left with a list of subhyperelliptic curves, and a list of remaining candidates which are potentially hyperelliptic but whose status remains unknown.

Since $X_0(N) \simeq \Curve{1}{N}$ we take advantage of the opportunity to verify the results of our code against the results and tables in \cite{HH96, HasegawaStarcurves, FH99}. 
In particular, in this paper we include the case $D = 1$ everywhere.

\subsection{Overview of paper}

In Section \ref{sec:reduction} we prove the reduction to a finite list of pairs $(D,N)$, which proves Theorem \ref{thm:finitenessmain}. Next, Section \ref{sec:genusfixedpoints} recalls  background material about optimal embeddings and fixed points of Atkin--Lehner involutions in order to derive a genus formula for the curves $\Curve{D}{N}/W$.  In this section we also introduce results deriving from a proposition of Ogg which filter out non-hyperelliptic curves by counting fixed points of involutions.
In Section \ref{sec:trace} we use trace formulas to count points over finite fields. We also introduce the extra modular non-Atkin--Lehner involutions that appear when $4$ or $9$ divide the level $N$. However, we defer the trace formulas for these other operators to Appendix \ref{appendix}. Section \ref{sec:covers} discusses several tools for determining hyperellipticity that leverage the covering structure of the curves  $\Curve{D}{N}/W$. In Section \ref{sec:isomorphism} we discuss several techniques for forming isomorphisms between these curves and in Section \ref{sec:Weilpolys} we discuss the application of Weil polynomials.

We discuss the results of applying all of these tests to the candidate curves in Section \ref{sec:results}. We also discuss potential methods for future study and limitations of the current method. Finally in Section \ref{sec:models} we discuss an implementation of the methods in \cite{GY17} to compute models of low genus Shimura curves, and use this to answer some questions of \cite{OanaFreddy}.

\subsection{Acknowledgments}
 We delight in thanking Edgar Costa for referring us to \cite{CostaWeilPolys} and assisting us with implementation aspects.
 It is our pleasure to thank Everett Howe for helpful correspondence regarding Weil polynomials of hyperelliptic curves, Bjorn Poonen for communicating to us several proofs of Proposition~\ref{prop: genus 3 covering genus 2} and other insightful comments, and Yifan Yang for helpful correspondence regarding Borcherds forms.

E.A. was supported by a grant from the Simons Foundation (SFI-MPS-Infrastructure-00008651, AS).
S.H. was supported by the National Science Foundation under Award No. DMS-2501658.

\section{Reduction to a finite problem}
\label{sec:reduction}
We first describe how to reduce the problem to a finite list of star curves $\StarCurve{D}{N}$.
The overarching strategy is to obtain lower and upper bounds on the number of points over a finite field, assuming $\StarCurve{D}{N}$ is hyperelliptic. 
These bounds will eventually cross when $D$ and $N$ are large enough.
Given a finite list of candidate hyperelliptic $\StarCurve{D}{N}$, this implies that there are finitely many candidate hyperelliptic $\Curve{D}{N}/W$, since every such curve covers a star curve and by the principle of upward closure (as in Proposition \ref{def:closure}).

\subsection{Notation} Before stating the bounds, we introduce some notation.

For a positive integer $N$, let $\omega(N)$ denote the number of its prime divisors. Write 
\begin{equation} \label{eqn:P1}
\psi(N) \coloneqq \# \P^1(\Z / N \Z) = N \prod_{p \mid N} \left ( 1 + \frac{1}{p} \right),
\end{equation}
where $p$ runs over the prime divisors of $N$,
and let $\varphi(N)$ denote Euler's totient function.

For a curve $X$ defined over a field $K$, we define $\gon{X}{K}$ to be the $K$-gonality of $X$, i.e. the minimum degree of a map $X \to \P^1$ defined over $K$. 

Let $h(N)$ be the largest divisor of $24$ with $h^2 \mid N$. Write $h(N) = h_2(N) h_3(N)$ with $h_2(N) \mid 8$ and $h_3(N) \mid 3$. If $2 \mid h_2(N)^2 \parallel N$, let $s_2(N) = \tfrac{3}{4}$, otherwise set $s_2(N) = 1$. If $h_3(N)^2 = 9 \parallel N$, let $s_3(N) = \tfrac{2}{3}$, otherwise set $s_3(N) = 1$, and let $s(N) = s_2(N) s_3(N)$.

Let $\delta_D = \delta_{D,1}$ be $0$ unless $D = 1$, when it is $1$; 
let $\delta_{4 \mid DN}$ be $0$ unless $4 \mid DN$, when it is $1$, and similarly for $\delta_{9 \mid DN}$. 

Finally, for positive integers $D,N$, let $h(D,N)$ be the class number of an Eichler order of level $N$ in a quaternion algebra of discriminant $D$. Then, by Eichler's class number formula (see e.g. \cite[Theorem 30.1.5]{JV}), one has
    \begin{equation} \label{eq: Eichler class number}
    h(D, N) = \frac{1}{12} \varphi(D) \psi(N) + \frac{\epsilon_2}{4} + \frac{\epsilon_3}{3},
    \end{equation} 
    where
    $$
    \epsilon_2 = \delta_{4 \mid DN} \prod_{p \mid D} \left(1 - \left(\frac{-4}{p} \right) \right)
    \prod_{p \mid N}  \left(1 + \left(\frac{-4}{p} \right) \right), 
    $$
    and
    $$
    \epsilon_3 = \delta_{9 \mid DN} \prod_{p \mid D} \left(1 - \left(\frac{-3}{p} \right) \right)
    \prod_{p \mid N}  \left(1 + \left(\frac{-3}{p} \right) \right). 
    $$

\subsection{Lower and upper bounds}

The following proposition will be our main tool to reduce to a finite problem, having lower bounds from supersingular points and upper bounds from the double cover of $\P^1$. 

\begin{proposition} \label{prop: finite bounds}
    Let $D, N$ be positive integers. Let $W \subseteq \ALfull{D}{N}$ be a subgroup of index $[\ALfull{D}{N} : W] = 2^r$, and let $X = \Curve{D}{N} / W$. 
    Let $p$ be a prime number and set
    $$
    H_p(D,N) = \begin{cases}
        h(Dp, N) & p \nmid DN, \\
        h(Dp, N/p) & p \mid N, \\
        h(D/p, Np) & p \mid D.
    \end{cases}
    $$
    If one has 
    \begin{equation} \label{eq: finite bounds}
    \frac{1}{2^{\omega(DN)-r}} H_p(D,N) + \delta_{D} 2^rh(N)s(N) > 2(p^2+1),
    \end{equation}
    then $\gamma_{\Qbar}(X) > 2$. 
\end{proposition}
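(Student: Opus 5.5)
We argue by contradiction. Suppose $\gamma_{\Qbar}(X) \le 2$, so there is a map $X_{\Fbar_p} \to \P^1$ of degree at most $2$ after reduction. The plan is to compare an upper bound for $\#X(\F_{p^2})$ coming from this map with a lower bound coming from supersingular points (for $p \mid D$, the Čerednik--Drinfeld picture) and cusps.

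\emph{Upper bound.} We will use the reduction of $X$ at $p$, or rather a suitable model: $X$ has good reduction at $p \nmid DN$, and at $p \mid DN$ we work with the special fibre's irreducible components and smooth points. First we check that the degree $\le 2$ map specializes. Specialization of gonality (Poonen's appendix, or the fact that the hyperelliptic/genus $\le 1$ locus is closed) gives $\gamma_{\Fbar_p}(X_{\Fbar_p}) \le 2$. Hence, after a twist, we obtain a map $X_{\F_{p^2}} \to C$ of degree $\le 2$ with $C$ a conic. If needed we pass to a finite extension so that the map is defined over $\F_{p^2}$; the point counts below are all taken over $\F_{p^2}$. Since $C \simeq \P^1$ over a finite field, we get
$$\#X(\F_{p^2}) \le 2\,\#\P^1(\F_{p^2}) = 2(p^2+1).$$
The subtle issue is that the descent field of the degree-$2$ map may be larger. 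We handle this by noting that the hyperelliptic involution is canonical, hence Galois-stable. The quotient is therefore a genus-$0$ curve over $\F_{p^2}$, which has a point. For genus $\le 1$ we instead use Hasse--Weil, or treat it separately.

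\emph{Lower bound.} When $p \nmid DN$, the supersingular points of $X_0(D,N)_{\F_p}$ are in bijection with the classes of the definite quaternion algebra of discriminant $Dp$ with Eichler level $N$; there are $h(Dp,N)$ of them, all defined over $\F_{p^2}$ by Deuring/Ribet-type arguments. The analogues for $p \mid N$ and $p \mid D$ come from the Deligne--Rapoport and Čerednik--Drinfeld descriptions, where the singular points and components are indexed by $h(Dp,N/p)$ and $h(D/p,Np)$ respectively. These are the cases of $H_p(D,N)$. The Atkin--Lehner group has order $2^{\omega(DN)}$ and $W$ has order $2^{\omega(DN)-r}$, so the images in $X$ number at least $H_p(D,N)/2^{\omega(DN)-r}$, and they remain $\F_{p^2}$-rational. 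When $D=1$ we add the cusps. There are $\sum_{d\mid N}\varphi(\gcd(d,N/d))$ of them, which are rational over $\Q$ when counted with the appropriate structure. Modulo $W$, and since they reduce to ordinary points (hence disjoint from the supersingular ones), they contribute at least $2^r h(N) s(N)$ rational points over $\F_{p^2}$. Here the factor $h(N)s(N)$ is meant to count a rational subset of cusps, and $W$ permutes the cusps in orbits of size $2^{\omega(N)-r}$.

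\emph{Conclusion.} Combining the two bounds gives $\#X(\F_{p^2}) > 2(p^2+1)$, which contradicts the upper bound. The main obstacle is making the bad-reduction cases $p\mid DN$ rigorous: one must count smooth $\F_{p^2}$-points of the special fibre and justify the gonality bound on a possibly singular fibre, which requires bounding points through components. The cusp count with the $s(N)$ correction is the other delicate bookkeeping step.
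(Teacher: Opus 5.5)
Your strategy is the paper's. You bound $\#X(\F_{p^2})\le 2(p^2+1)$ using a degree-$\le 2$ map to $\P^1$, and compare it with a lower bound from the $H_p(D,N)/\#W$ images of $\F_{p^2}$-rational supersingular points, plus cusps when $D=1$. Your descent of the map is a valid alternative to the paper's use of \cite[Theorem 2.5]{Poonen}: the hyperelliptic involution is unique, hence defined over the finite field, so the genus-$0$ quotient is $\P^1$, and Hasse--Weil handles genus $\le 1$. It even avoids needing $X(\F_{p^2})\neq\emptyset$. Drop the remark about passing to a finite extension, which cannot produce a map over $\F_{p^2}$.

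The first genuine gap is the cusp term. You never say which cusps $h(N)s(N)$ counts, and both facts you invoke are false. The cusps of denominator $d$ are only defined over $\Q(\zeta_{\gcd(d,N/d)})$, not over $\Q$. And $W$ need not act freely: $w_4$ fixes the cusp $1/2$ of $X_0(4)$. What is needed is the content of \cite[Proposition 1]{FH99}, which the paper cites:
\begin{itemize}
\item The cusps with $\gcd(d,N/d)\mid 24$ number exactly $2^{\omega(N)}h(N)s(N)$; check this prime by prime.
\item For $p\nmid N$ they are $\F_{p^2}$-rational, since $\gcd(d,N/d)\mid p^2-1$. For $p\ge 5$ this is $24\mid p^2-1$; for $p=2$ (resp.\ $p=3$) the gcd divides $3$ (resp.\ $8$).
\item This set is $W$-stable, because each $w_Q$ preserves $\gcd(d,N/d)$.
\item Its $W$-orbits have size \emph{at most} $2^{\omega(N)-r}$, which is all the inequality requires.
\end{itemize}

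The second gap is the case $p\mid DN$, which you leave open, and the route you sketch points the wrong way. In these cases $H_p(D,N)$ counts the \emph{nodes} of the special fibre. When $p\parallel N$ these are the crossings of the two copies of $X_0(D,N/p)_{\F_p}$ in \cite[Theorem 4.7(v)]{Buzzard}; when $p\mid D$ they are the edges of the \v{C}erednik--Drinfeld dual graph. So restricting to smooth $\F_{p^2}$-points discards exactly the points you want to count. You would still need an upper bound for the $\F_{p^2}$-rational nodes of a degenerate hyperelliptic fibre, for instance by extending the hyperelliptic involution to the stable model and mapping nodes to its genus-$0$ quotient. The paper runs the same template here, citing \cite{Buzzard} and \cite{KR08} for the count, and does not separately justify the upper bound on a singular fibre. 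Note, however, that the proof of Theorem~\ref{thm:finiteness} only applies the proposition with $p$ the least prime not dividing $DN$. So your good-reduction argument, with the cusp count repaired, is enough for that application.
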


\begin{proof}
    Assume, on the contrary, that $\gamma_{\Qbar}(X) \le 2$. Then reducing modulo a prime above $p$ we get $\gamma_{\Fbar_p}(X) \le 2$. 
    Since $X(\F_{p^2}) \ne \emptyset$, by \cite[Theorem 2.5]{Poonen}, we have $\gamma_{\F_{p^2}}(X) = \gamma_{\Fbar_p}(X) \le 2$.
    In particular, we have a map $X \to \P^1$ defined over $\F_{p^2}$, of degree $d \le 2$, so that 
    \begin{equation}
    \label{bound:finitefield}
    \#X(\F_{p^2}) \le d \# \P^1(\F_{p^2}) \le 2(p^2 + 1).
    \end{equation}
    By \cite[Theorem 3.4]{Ribet89SS} (when $p \nmid DN$), \cite[Theorem 4.7(v)]{Buzzard} (when $p \mid N$), and \cite[p.9 before (3.5)]{KR08} (when $p \mid D$), the number of supersingular points on $\Curve{D}{N}_{\F_p}$ is $H_p(D,N)$,
    yielding the lower bound
    $$
    \frac{1}{2^{\omega(DN)-r}} H_p(D,N) \le \#X(\F_{p^2}).
    $$
    If $D = 1$, we can improve the lower bound by adding the cusps, as in \cite[Proposition 1]{FH99} 
    In both cases, this leads to a contradiction. 
\end{proof}

Let us denote
$$
    f_1(D) \colonequals \frac{1}{2^{\omega(D)}} \varphi(D), \quad
    f_2(N) \colonequals \frac{1}{2^{\omega(N)}} \psi(N), \quad 
    g(p) \colonequals \frac{24(p^2+1)}{p-1},
$$
and denote by $p_n$ the $n$-th prime, so that $p_1 = 2, p_2 = 3$ and so forth.

The following lemma is based on \cite[Lemma, p.181]{HH96}.

\begin{lemma} \label{lem: finite bounds}
  If $r+s \ge 7$, then 
  \begin{equation} \label{eq: finite bounds on prime products}
  f_1(p_1 p_2 \cdots p_r)f_2(p_{r+1} p_{r+2} \cdots p_{r+s}) > g(p_{r+s+1}).
  \end{equation}
\end{lemma}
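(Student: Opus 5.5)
The plan is to reduce to the single extremal case $s=0$ and then argue by induction on $n = r+s$, using Bertrand's postulate to control how $g$ grows from one prime to the next.

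\emph{Reduction to $s=0$.} Both $f_1$ and $f_2$ are multiplicative on squarefree integers, with $f_1(p) = (p-1)/2$ and $f_2(p) = (p+1)/2$ for a prime $p$. Since $f_2(p) > f_1(p) > 0$ for every prime, we get, with $n = r+s$,
$$
f_1(p_1 \cdots p_r)\, f_2(p_{r+1}\cdots p_{r+s}) \ \ge\ \prod_{i=1}^{n} \frac{p_i - 1}{2} \eqqcolon F(n).
$$
The right-hand side of \eqref{eq: finite bounds on prime products} depends only on $n$. So it suffices to prove $F(n) > g(p_{n+1})$ for all $n \ge 7$.

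\emph{Base case $n=7$.} We have
$$
F(7) = \frac{1\cdot 2\cdot 4\cdot 6\cdot 10\cdot 12\cdot 16}{2^7} = \frac{92160}{128} = 720,
$$
while
$$
g(p_8) = g(19) = \frac{24\cdot 362}{18} \approx 482.7 .
$$
Hence the inequality holds for $n=7$.

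\emph{Inductive step.} First sandwich $g$ between linear functions. For real $x \ge 3$,
$$
\frac{x^2+1}{x-1} = x + 1 + \frac{2}{x-1},
$$
so $24(x+1) < g(x) \le 24(x+2)$.

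Now let $n \ge 7$ and write $p = p_{n+1} \ge 19$. By Bertrand's postulate, $p_{n+2} < 2p$. Therefore
$$
g(p_{n+2}) \le 24(p_{n+2}+2) < 24(2p+2) = 48(p+1) < 2\,g(p).
$$
On the other hand, $F(n+1) = F(n)\cdot\frac{p-1}{2}$ and $\frac{p-1}{2} \ge 9 > 2$. Assuming $F(n) > g(p_{n+1})$, we obtain
$$
F(n+1) > 2\,g(p_{n+1}) > g(p_{n+2}),
$$
which completes the induction.

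\emph{Where care is needed.} The only delicate point is comparing $g$ at consecutive primes. This is exactly where Bertrand's postulate and the two-sided linear bound on $g$ are used. Once the ratio $g(p_{n+2})/g(p_{n+1})$ is bounded by $2$, it is dominated by the multiplicative gain $(p_{n+1}-1)/2 \ge 9$, and everything else is a finite check at $n=7$.
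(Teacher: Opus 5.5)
Your proof is correct and is essentially the paper's argument: the same base case $720 > g(19) = 1448/3$ at seven primes, the comparison $f_1(p) < f_2(p)$ to pass to a pure $f_1$-product, and an induction in which Bertrand's postulate limits the growth of $g$ between consecutive primes (you get a ratio below $2$ from $24(x+1) < g(x) \le 24(x+2)$, the paper a ratio below $4$) against a multiplicative gain of at least $9$. Your bookkeeping, which reduces every case to $s=0$ and then inducts on $n=r+s$, is slightly cleaner than the paper's induction on $s$ with $r$ fixed: it handles the case $s=0$, $r\ge 8$ directly, which the paper's induction does not literally reach.
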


\begin{proof}
   One checks that for $r = 7$ and $s = 0$ we have $f_1(p_1 p_2 \cdots p_r) = 720 > 1448/3 = g(p_{r+1})$. Moreover, since $f_1(p) < f_2(p)$ for all $p$, we have
   \begin{equation*}
   f_1(p_1 \cdots p_r) f_2(p_{r+1} \cdots p_{r+s}) \ge f_1(p_1 \cdots p_{r+s}),
   \end{equation*}
   showing that if $r+s = 7$, the result holds.
   Assume that $r+s \ge 8$ and $s \ge 1$. Then
    $$
    \frac{f_2(p_{r+1} \cdots p_{r+s})}{f_2(p_{r+1} \cdots p_{r+s-1})} = f_2(p_{r+s}) = \frac{1}{2}(p_{r+s} - 1) \ge 9
    $$
    and 
    $$
    \frac{g(p_{r+s+1})}{g(p_{r+s})} = \frac{p_{r+s+1}^2 + 1}{p_{r+s}^2+1} \cdot \frac{p_{r+s} - 1}{p_{r+s+1}-1}
    < \frac{p_{r+s+1}^2+1}{p_{r+s}^2+1} \le \frac{1 + 4p_{r+s}^2}{1 + p_{r+s}^2} < 4.
    $$
    Proceeding by induction on $s$, if \eqref{eq: finite bounds on prime products} holds for $r, s - 1$, then
    $$
    f_1(p_1\cdots p_r)f_2(p_{r+1} \cdots p_{r+s}) \ge 9 f_1(p_1 p_2 \cdots p_r)f_2(p_{r+1} p_{r+2} \cdots p_{r+s-1}) > 4 g(p_{r+s}) > g(p_{r+s+1}).
    $$
    Therefore, the claim holds for all $r,s$ such that $r+s \ge 7$.
\end{proof}

\begin{theorem}
\label{thm:finiteness}
    The set 
    $
    H \colonequals \{ (D,N) \in \N^2 : \gamma_{\Qbar}(\StarCurve{D}{N}) \le 2 \}
    $
    is finite.
\end{theorem}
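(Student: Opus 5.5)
We apply Proposition~\ref{prop: finite bounds} with $W = \ALfull{D}{N}$, so $r = 0$ and $X = \StarCurve{D}{N}$. We always choose $p$ to be the smallest prime not dividing $DN$. Then $H_p(D,N) = h(Dp,N)$, and condition \eqref{eq: finite bounds} becomes
\[
\frac{1}{2^{\omega(DN)}} h(Dp,N) + \delta_D h(N)s(N) > 2(p^2+1).
\]
The second term is nonnegative and can be dropped. Eichler's formula \eqref{eq: Eichler class number} gives $h(Dp,N) \ge \frac{1}{12}\varphi(D)(p-1)\psi(N)$, since the $\epsilon_2,\epsilon_3$ terms are nonnegative. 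Because $\omega(DN) = \omega(D)+\omega(N)$ when $(D,N)=1$, it therefore suffices to show
\[
f_1(D) f_2(N) > g(p) = \frac{24(p^2+1)}{p-1}.
\]
If this holds, then $(D,N) \notin H$.

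Next we reduce to the prime-product situation of Lemma~\ref{lem: finite bounds}. Write $D = q_1\cdots q_a$ and let $N$ have distinct primes $\ell_1,\dots,\ell_b$. Then $f_1(D) = \prod (q_i-1)/2$ and $f_2(N) \ge \prod (\ell_j+1)/2$, with the latter growing further with the powers in $N$. Set $k = a+b = \omega(DN)$. The primes dividing $DN$ are $k$ distinct primes, and $p$ is the least prime outside them, so $p \le p_{k+1}$. Replacing the primes of $D$ by $p_1,\dots,p_a$ and those of $N$ by $p_{a+1},\dots,p_{a+b}$ only decreases the left side. Here $f_1,f_2$ are multiplicative and increasing in each prime, but this relies on a rearrangement: one needs that the assignment of the smallest primes to $D$ and the next ones to $N$ is the worst case. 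Since $(q-1)/2 < (q+1)/2$, the worst case could instead use small primes in $N$. I would handle this by using the crude bound $f_2(\ell) \ge f_1(\ell)$, as in the proof of Lemma~\ref{lem: finite bounds}, giving $f_1(D)f_2(N) \ge f_1(p_1\cdots p_k)$. The function $g$ is increasing for $p \ge 3$, and $g(2)=g(3)$ is harmless, so $g(p) \le g(p_{k+1})$. Lemma~\ref{lem: finite bounds} with $r=k$, $s=0$ then shows that every $(D,N) \in H$ has $\omega(DN) \le 6$.

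It remains to bound the size of $DN$ once $k \le 6$. Then $p \le p_7 = 17$, so $g(p) \le g(17)$ is a fixed constant $C$. Also $f_1(D)f_2(N) \ge \prod_{q \mid D}\frac{q-1}{2}\cdot \frac{N}{2^{\omega(N)}}$, which tends to infinity as $DN \to \infty$ with $\omega(DN) \le 6$. One caveat is that the factor $\frac{q-1}{2}$ equals $\frac12$ for $q=2$, but that is just a bounded constant. Hence $f_1(D)f_2(N) > C$ for all but finitely many $(D,N)$, and these exceptions are exactly the pairs that would need to be checked to obtain the $2342$ pairs. Since finiteness is all that is required, this completes the argument.

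The main obstacle is the monotonicity/rearrangement step. One must make sure that replacing the actual primes by the first $k$ primes is legitimate even though $p$ depends on which primes divide $DN$. The inequality $p \le p_{k+1}$ resolves this, together with monotonicity of $g$ from $p=3$ onward.
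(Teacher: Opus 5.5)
Your proof is correct and follows the paper's argument. You apply Proposition~\ref{prop: finite bounds} with $r=0$ and $p$ the least prime not dividing $DN$, reduce to $f_1(D)f_2(N)>g(p)$, invoke Lemma~\ref{lem: finite bounds} when $\omega(DN)\ge 7$ (via $p\le p_{\omega(DN)+1}$), and use $g(p)\le g(17)=435$ otherwise.

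The only differences are two simplifications. First, you use the crude bound $f_2(\ell)\ge f_1(\ell)$ where the paper uses the rearrangement inequality $f_1(p)f_2(q)<f_1(q)f_2(p)$ for $p<q$. That inequality shows the worst case puts the small primes in $D$, so your worry about $N$ was unfounded. Second, for $\omega(DN)\le 6$ you use the elementary bound $f_1(D)f_2(N)\ge DN/4^{6}$ (from $q-1\ge q/2$ and $\psi(N)\ge N$) instead of Landau's minimum of $\varphi(n)/n^{1-\delta}$; both are equally effective for finiteness.
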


\begin{proof}
     Let $C = 2^6 \cdot 435$ and $A = p_1 p_2 \cdots p_8 = 9699690$. We will show that if $$N \cdot \varphi(A) \left( \frac{D}{A} \right)^{\frac{\ln(18)}{\ln(19)}}\ge C$$ then $(D,N) \notin H$, hence the result.

     Write 
     $$
     D = \prod_{k=1}^r p_{i_k}^{\alpha_k}, \quad N = \prod_{l=1}^s p_{j_l}^{\beta_l}
     $$ 
     with $i_1 < i_2 < \ldots < i_r$, $j_1 < j_2 < \ldots < j_s$ and $\alpha_k, \beta_l\ge 1$ for all $k,l$. 

     Note that for any prime $p$ and an integer $\alpha \ge 1$, one has $f_1(p^{\alpha}) \ge f_1(p)$ and $f_2(p^{\alpha}) > f_2(p)$. Also, if $p < q$ are two primes, then $f_1(p) < f_1(q)$, $f_2(p) < f_2(q)$ and $f_1(p)f_2(q) < f_1(q) f_2(p)$.

     Then $f_1(D) \ge \prod_{k=1}^r f_1(p_{i_k})$ and $f_2(N) \ge \prod_{l=1}^s f_2(p_{j_l})$.
     Moreover, by this last property, if $\{i_k \}_{k=1}^r \cup \{j_l \}_{l=1}^s = \{m_t\}_{t=1}^{r+s}$, with $m_1 < m_2 < \ldots < m_{r+s}$, then 
     $$
     f_1(D) f_2(N) \ge \prod_{t=1}^r f_1(p_{m_t}) \prod_{t=r+1}^{r+s} f_2(p_{m_t})
     \ge  \prod_{t=1}^r f_1(p_t) \prod_{t=r+1}^{r+s} f_2(p_t).
     $$
     It follows that $f_1(D)f_2(N) \ge f_1(p_1 \cdots p_r) f_2(p_{r+1} \cdots p_{r+s})$.
     If $r+s \ge 7$, then Lemma~\ref{lem: finite bounds} yields $f_1(D) f_2(N) > g(p_{r+s+1})$, hence \eqref{eq: finite bounds} holds (with $r = 0$) for the smallest prime $p$ such that $p \nmid DN$, as $p \le p_{r+s+1}$, which by Proposition~\ref{prop: finite bounds} means that $(D,N) \notin H$. 

     On the other hand, if $r + s \le 6$ then $p \le p_7 = 17$, hence $g(p) \le g(17) = 435$. 
     By Landau's theorem (see e.g. \cite[proof of Theorem 328, \S 22.9]{HardyWright}), for $\delta = 1 - \log_{p_8}(p_8 - 1)$, the function $\varphi(n)/n^{1-\delta}$ attains its minimum at $A = p_1  p_2 \cdots p_8$. Therefore, as $p_8 = 19$, we get
     $$
     f_1(D) f_2(N) \ge \frac{\varphi(A)}{2^6} \left( \frac{D}{A} \right)^{\frac{\ln(18)}{\ln(19)}} \cdot N \ge \frac{C}{2^6} = 435,
     $$
     showing that \eqref{eq: finite bounds} holds in this case as well, hence $(D,N) \notin H$.
\end{proof}

Using the explicit bounds yields an effective method to enumerate this finite list of pairs, leaving us with $2342$ pairs $(D,N)$ for which \eqref{eq: finite bounds} does not hold. There are $424$ values of $D$ we have to check, with the maximal possible $D$ being $39270 = 2 \cdot 3 \cdot 5 \cdot 7 \cdot 11 \cdot 17$. 

\section{The genus formula and fixed points of Atkin--Lehner involutions}
\label{sec:genusfixedpoints}
We first recall the genus formula for  $\Curve{D}{N}$, and then, using fixed point formulas for the Atkin--Lehner involutions, give a genus formula for the quotient curve $\Curve{D}{N} / W$. 
We then use the fixed point formulas to give another test for subhyperellipticity.

Recall that  $B = B_D$ is the quaternion algebra of discriminant $D$ over $\Q$, and let $\Order \subseteq B$ be an Eichler order of level $N$.
Throughout the paper we use the notation $w_d $ for the unique element  in $\ALfull{D}{N}$ represented by some $\gamma_d \in N_{B^{\times}}(\Order)$ with $\nrd(\gamma_d) = d$. 

\subsection{The genus formula}
\label{sec:genus}
These formulas rely on counting fixed points which arise as CM points (or cusps) on $\Curve{D}{N}$. 
These fixed points can be counted by counting optimal embeddings of quadratic orders.

\begin{definition}
    Let $K/\Q$ be a separable quadratic and $R \subset K$ an order.  An embedding $\phi: R \to \Order$ is \emph{optimal} if 
    \[ \phi(K) \cap \Order = \phi(R) .\]

    For any $\gamma  \in \Order^\times$, the optimal embedding $\phi_\gamma (\alpha) \coloneqq \gamma \phi(\alpha) \gamma^{-1}$ is \emph{equivalent} to $\phi$.
    We write $\nu(R,\Order)$ to denote the number of equivalence (or conjugacy) classes of optimal embeddings from $R$ to $\Order$.
\end{definition}

Note that an embedding $\phi: R \hookrightarrow \Order$ is optimal if and only if the induced local embeddings $S_p \hookrightarrow \Order_p$ are optimal for all primes $p$. Let $\nu_p(R, \Order)$ denote the number of equivalence classes of optimal embeddings $R_p \hookrightarrow \Order_p$.

\begin{proposition}[Eichler, see {\cite[Proposition 1]{Ogg}}]
\label{prop:localembed}
Let $R$ be an order of conductor $f$ and denote by $h(R)$ the class number of $R$.
Then
\[ \nu(R,\Order) = h(R) \prod_{p} \nu_p(R, \Order).  \]
\end{proposition}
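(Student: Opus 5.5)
Our plan is the classical adelic counting argument (Eichler's trace formula for optimal embeddings), as in \cite{Ogg} or \cite[Ch.~30]{JV}. The key observation is that an optimal embedding $\phi\colon R \hookrightarrow \Order$ is the same as an embedding $\phi\colon K \hookrightarrow B$ with $\phi(K)\cap \Order = \phi(R)$. By Skolem--Noether, any two embeddings $K \hookrightarrow B$ are conjugate by $B^\times$. So, after fixing one embedding $\phi_0$, the embeddings of $K$ into $B$ are parametrized by $B^\times/K^\times$, via $\beta \mapsto \beta\phi_0\beta^{-1}$.

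\textbf{Step 1: express the count adelically.} Optimality with respect to $\Order$ becomes optimality of $\phi_0$ with respect to $\beta^{-1}\Order\beta$. We then pass to the adelic picture, writing $\widehat{B}^\times = \prod_p' B_p^\times$ and $\widehat{\Order}^\times=\prod_p \Order_p^\times$. Optimal embeddings up to $\Order^\times$-conjugacy correspond to double cosets
\[
K^\times \backslash \{\hat\beta \in \widehat{B}^\times : \phi_0 \text{ optimal for } \hat\beta^{-1}\widehat{\Order}\hat\beta\} / \widehat{\Order}^\times ,
\]
restricted to those $\hat\beta$ for which $\hat\beta^{-1}\widehat{\Order}\hat\beta$ is globally conjugate to $\Order$.

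\textbf{Step 2: use the hypotheses to simplify.} Here we use that $B$ is indefinite: this is implicit in the paper's setting $\mu(D)=1$, where $\Order$ has type number and class number one in the relevant sense. By strong approximation, every locally conjugate Eichler order is globally conjugate, and the reduced norm $\nrd(\Order^\times)$ together with the local conditions is surjective enough. This removes the global restriction in Step 1. The set of admissible $\hat\beta$ then factors as a restricted product of local sets $E_p$, and each $E_p/\Order_p^\times$ carries an action of $K_p^\times$.

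\textbf{Step 3: count orbits.} The quotient by $K^\times$ splits into two parts:
\begin{itemize}
\item the local orbit counts $K_p^\times\backslash E_p/\Order_p^\times$, which by definition are $\nu_p(R,\Order)$;
\item the global factor $K^\times\backslash \widehat{K}^\times/\widehat{R}^\times = \mathrm{Pic}(R)$, of size $h(R)$.
\end{itemize}
One also checks that $\nu_p=1$ for almost all $p$, namely $p\nmid DNf$, where $\Order_p \cong M_2(\Z_p)$ is maximal and $R_p$ is maximal, so the product is finite.

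\textbf{Main obstacle.} We expect the hardest step to be Step 2, specifically the compatibility of the equivalence relation. Conjugation by $\Order^\times$ versus by $N(\Order)$, and the sign or orientation issue coming from norm $-1$ units, must be matched correctly with $\widehat{R}^\times$ acting on the right. We would handle this with the exact sequence
\[
1 \to \widehat{R}^\times \to \widehat{K}^\times \to \widehat{K}^\times/\widehat{R}^\times
\]
together with strong approximation for $B^1$, following \cite[Thm.~30.4.7]{JV}.
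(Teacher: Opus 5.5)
Your argument is the standard adelic proof of Eichler's theorem, which the paper does not prove but simply cites via Ogg; specialized to the indefinite case, where $\widehat{B}^\times = B^\times\widehat{\Order}^\times$ (class number one, from strong approximation for $B^1$ together with $\nrd(B^\times)=\Q^\times$), it is correct and essentially the intended route. Two small fixes: in Step 1 the condition should read ``$\phi_0$ optimal for $\hat\beta\widehat{\Order}\hat\beta^{-1}$'' to be compatible with $K^\times$ acting on the left and $\widehat{\Order}^\times$ on the right; and the orientation/norm $-1$ issue you flag as the main obstacle does not arise for $\Order^\times$-classes, since beyond class number one the only input needed is that optimality makes the stabilizer in $\widehat{K}^\times$ of $\hat\beta\widehat{\Order}^\times$ exactly $\widehat{R}^\times$, so each $\widehat{K}^\times$-orbit contributes $\#K^\times\backslash\widehat{K}^\times/\widehat{R}^\times = h(R)$ classes.
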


The local factors $\nu_p(R, \Order)$ have explicit formulae and can be computed easily, see  \cite[Theorem 2]{Ogg} for a full description.

Write $\nu_\infty(D,N)$ for the number of cusps on $\Curve{D}{N}$, which is $2^{\omega(N)}$ when $D  = 1$ and $0$ otherwise.
The genus of $\Curve{D}{N}$ is 
\[ g(D,N) =1 +  \frac{1}{12} \prod_{p | D} (p - 1) 
\prod_{p|N} (p+1)  -
\frac{\nu(\Z[\sqrt{-1}], \Order)}{4} - \frac{\nu(\Z[\zeta_3], \Order)}{3} - 
\frac{\nu_\infty(D,N)}{2}. \]

Let $m \mid DN$ and consider the Atkin--Lehner involution $w_m.$ To obtain a genus formula for $\Curve{D}{N}/W$ where $W\subset \ALfull{D}{N}$ we need to count fixed points of $w_m$. Let $\mu \in \Order$ denote an element of norm $m$ which induces the automorphism $w_m$ (under conjugation). To this element we can associate one or two equivalence classes of optimal embeddings.

\begin{theorem}[{\cite[(4)]{Ogg}}]
\label{thm:ALfixedpoints}
    The number of fixed points of $w_m$ is  (except for the case $D = 1$ and $m = 4$)
    \begin{equation} \label{eq:ALfixedpoints}
    f_m \coloneqq \sum_R h(R) \prod_{\substack{p \mid DN \\ p \nmid m}} \nu_p(R, \Order),
    \end{equation}

    where the sum is taken over:
    \begin{itemize}
       \item $\Z[\sqrt{-m}]$, $\Z[\frac{1+\sqrt{-m}}{2}]$ when $m \equiv 3 \bmod 4$,
        \item $\Z[\sqrt{-m}]$ when $m \not \equiv 3 \bmod 4$ and $m \ne 2$,
        \item  $\Z[\sqrt{-2}]$, $\Z[\sqrt{-1}]$ when $m=2$.
    \end{itemize}
\end{theorem}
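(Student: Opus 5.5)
The plan is to identify fixed points of $w_m$ with CM points on $\Curve{D}{N}$ whose CM order contains the element $\mu$, and then count these points using Eichler's formula (Proposition~\ref{prop:localembed}).

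First, I will describe the fixed points. Uniformize $\Curve{D}{N}(\mathbb{C}) = \Gamma\backslash\mathcal{H}$ with $\Gamma = \Order^1$, together with the cusps when $D=1$; these cusps are handled separately. A point $\tau\in\mathcal{H}$ is fixed by $w_m$ exactly when $\mu\gamma\tau=\tau$ for some $\gamma\in\Order^\times$ of positive norm. The element $\mu' = \mu\gamma$ lies in $\Order$, has reduced norm $m$, and normalizes $\Order$. Since it is elliptic, its trace $t$ satisfies $t^2<4m$. Because $\mu'^2$ has norm $m^2$ and $\mu'^2 \in m\Order^\times$ up to the normalizer structure, one checks that $t\in\{0\}$, except when $m\in\{2,3,4\}$, where $t^2\in\{m,2m,3m\}$ is also allowed. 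Concretely:
\begin{itemize}
\item $t=0$ gives $\mu'^2=-m$, hence an embedding of $\Z[\sqrt{-m}]$;
\item $m\equiv 3 \bmod 4$ together with $t=\pm1$ gives $\Z[\tfrac{1+\sqrt{-m}}{2}]$, since $\mu'/\!$ produces the maximal order in this case;
\item $m=2$ with $t=\pm2$ gives $\mu'=1+i$, i.e.\ $\Z[\sqrt{-1}]$.
\end{itemize}
This accounts for the list of orders in the statement. The excluded case $D=1$, $m=4$ is where extra cusps and $\Z[\sqrt{-1}]$-type contributions occur, which is why it does not fit the pattern.

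Next, I will set up the counting bijection. Fixed points $\tau$ correspond to pairs consisting of an order $R$ from the list and an optimal embedding $\phi\colon R\to\Order$ whose image contains an element inducing $w_m$. This correspondence is taken modulo $\Gamma$-conjugation and modulo the identification $\phi\sim\bar\phi$, which is complex conjugation, i.e.\ the choice of fixed point in $\mathcal{H}$ versus its conjugate. I will need to verify that each fixed point arises from exactly one order $R$, namely the optimal one $\Q(\mu')\cap\Order$, and that the $\pm$ and conjugation ambiguities exactly cancel the factor of $2$ coming from orientation.

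Finally comes the local analysis, which I expect to be the main obstacle. At primes $p\mid m$, the condition that $\phi(\sqrt{-m})$ normalize $\Order_p$ with norm of valuation $v_p(m)$ forces a unique local embedding class up to $\Order_p^\times$-conjugation, and the Atkin--Lehner element $w_p$ itself swaps the two orientations. So the local factor $\nu_p$ is replaced by $1$, consistent with the product in \eqref{eq:ALfixedpoints} running only over $p\mid DN$, $p\nmid m$. At $p\nmid DN$, the order $\Order_p$ is maximal and the local factor is $1$. Combining these with Proposition~\ref{prop:localembed}, while dividing out the contributions from $p\mid m$ and accounting for $\Order^\times$ versus $\Order^1$ conjugacy (handled via the $w_m$-action), yields $h(R)\prod_{p\mid DN,\,p\nmid m}\nu_p(R,\Order)$ for each $R$. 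Summing over $R$ then gives $f_m$. The delicate points are the normalization at $p\mid m$ and the bookkeeping of the factor $2$ from orientations; I would verify both against Ogg's local computations in \cite[Theorem 2]{Ogg}.
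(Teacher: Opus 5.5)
Your overall architecture is right, and it is Ogg's argument, which the paper only cites. Fixed points of $w_m$ are CM points whose optimal order $R=\Q(\mu')\cap\Order$ contains a normalizing element of norm $m$, and these are counted by Eichler's formula with local factor $1$ at $p\mid m$. But there are two genuine gaps: the list of orders is derived incorrectly, and the local step at $p\mid m$ is asserted rather than proved.

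\textbf{The list of orders.} The constraint you need is $m\mid t=\tr(\mu')$. Since $w_m^2=1$ and $\nrd(\mu'^2)=m^2$, we have $\mu'^2\in m\Order^1$, so $t\mu'=\mu'^2+m\in m\Order$. Moreover $\mu'/d\notin\Order$ for $d>1$: locally $\mu'$ is a unit at $p\nmid m$, is $\mattwo{0}{1}{p^e}{0}$ times a unit at $p^e\parallel N$ with $p\mid m$, and is a uniformizer at $p\mid D$, $p\mid m$. Hence $m\mid t$. Together with $t^2<4m$ this leaves:
\begin{itemize}
\item $t=0$ for $m\ge 4$;
\item additionally $t=\pm 2$ for $m=2$, giving $\Z[i]$;
\item additionally $t=\pm 3$ for $m=3$, giving $\Z[\zeta_3]$ again, so nothing new.
\end{itemize}
Your derivation of $\Z[\frac{1+\sqrt{-m}}{2}]$ from $t=\pm1$ is false. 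An element of norm $m$ and trace $\pm1$ generates $\Q(\sqrt{1-4m})\neq\Q(\sqrt{-m})$, and it cannot represent $w_m$ anyway since $m\nmid 1$. Likewise $t=\pm2$ is excluded for $m=4$.

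The order $\Z[\frac{1+\sqrt{-m}}{2}]$ enters for a different reason. With $\mu'^2=-m$, the optimal order $\Q(\mu')\cap\Order$ can be strictly larger than $\Z[\mu']$. The overorders of $\Z[\sqrt{-m}]$ containing no $\mu'/d$ with $d>1$ are exactly $\Z[\sqrt{-m}]$ and, when $m\equiv 3\bmod 4$, $\Z[\frac{1+\sqrt{-m}}{2}]$.

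The exception $D=1$, $m=4$ comes from parabolic elements of trace $\pm4=\pm2\sqrt{m}$, which are allowed since $4\mid 4$. These fix cusps; the exception has nothing to do with $\Z[i]$.

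\textbf{The local step at $p\mid m$.} This is the real content of the theorem, and ``dividing out the contributions from $p\mid m$'' is not a valid mechanism. When $p^e\parallel N$ with $e\ge 2$, not every optimal embedding $R_p\to\Order_p$ sends the norm-$m$ element into the normalizer of $\Order_p$, and $\nu_p(R,\Order)$ can exceed $1$.

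For example, on $X_0(9)\cong\P^1$ the point $(-1+i)/3$ is fixed by $\mattwo{3}{2}{-9}{-3}$ and has optimal order $\Z[3i]$. It is not fixed by $w_9$, whose only norm-$9$ candidates in $\Z[3i]$ are $\pm3$ and $\pm\mattwo{3}{2}{-9}{-3}$, none of which lies in the $w_9$-coset. Indeed $w_9$ has just $2=h(-36)$ fixed points.

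So you must prove two things:
\begin{enumerate}
\item[(a)] the optimal local embeddings whose image contains a normalizing element of norm $m$ form exactly one $\Order_p^\times$-class;
\item[(b)] the class-by-class form of Eichler's theorem: the global $\Order^\times$-classes lying over a prescribed tuple of local classes number exactly $h(R)$.
\end{enumerate}
You also need to check that each such embedding really produces an element of the $w_m$-class.

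Finally, the passage from $\Order^1$- to $\Order^\times$-conjugacy, which is your ``factor of $2$'' from orientations, is effected by units of norm $-1$. These swap $\phi$ and $\bar\phi$; the $w_m$-action plays no role there.
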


The genus formula for the quotient follows from applying the Riemann--Hurwitz formula.
\begin{theorem}[Genus formula]
Let $ W$ be a subgroup of Atkin--Lehner involutions of size $2^r$.
    The genus of $\Curve{D}{N}/W$ is
    \[1 + 2^{-r}(g(D,N)-1) - 2^{-r-1}\sum_{w_m \in W\setminus \{1\}} f_m .\]
\end{theorem}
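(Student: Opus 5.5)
The plan is to apply Riemann--Hurwitz to the Galois cover $\pi\colon \Curve{D}{N} \to \Curve{D}{N}/W$, which has degree $\#W = 2^r$. First I will check that $W$ acts faithfully on $\Curve{D}{N}$. This follows because $\ALfull{D}{N}$ acts through automorphisms, and each nontrivial $w_m$ has only finitely many fixed points $f_m$ by Theorem~\ref{thm:ALfixedpoints}. In the excluded case $D=1$, $m=4$, the element still acts nontrivially, and I will assume the same fixed-point count is meant there or handle it separately. Riemann--Hurwitz then gives
\[
2g(D,N) - 2 = 2^r\bigl(2g(X)-2\bigr) + \sum_{P \in \Curve{D}{N}} \bigl(e_P - 1\bigr),
\]
where $X = \Curve{D}{N}/W$ and $e_P = \#\mathrm{Stab}_W(P)$. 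The cover is tame in characteristic $0$.

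The key step is to rewrite the ramification term. For each point $P$, I will use the identity $e_P - 1 = \#\{\, w \in W\setminus\{1\} : w(P) = P \,\}$. Summing this over $P$ and exchanging the order of summation gives
\[
\sum_P (e_P - 1) = \sum_{w_m \in W \setminus \{1\}} \#\mathrm{Fix}(w_m) = \sum_{w_m \in W\setminus\{1\}} f_m ,
\]
with $f_m$ as in Theorem~\ref{thm:ALfixedpoints}. This exchange is the only real content; it holds for any finite group acting faithfully on a curve.

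It then remains to solve for $g(X)$. Dividing by $2^{r+1}$ gives
\[
g(X) - 1 = 2^{-r}\bigl(g(D,N)-1\bigr) - 2^{-r-1}\sum_{w_m\in W\setminus\{1\}} f_m,
\]
which is the stated formula.

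The main obstacle is making sure that $f_m$ really counts all fixed points of $w_m$ on the compactified curve. For $D=1$ this includes the cusps fixed by $w_m$, and it also includes the exceptional case $m=4$ excluded in Theorem~\ref{thm:ALfixedpoints}. For that case I would cite the appropriate modification from \cite{Ogg}, or from Furumoto--Hasegawa, giving the fixed points of $w_4$ on $X_0(N)$, and interpret $f_4$ accordingly. Apart from this bookkeeping, the argument is formal.
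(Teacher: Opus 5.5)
Your proposal is correct and is exactly the paper's argument, which simply invokes Riemann--Hurwitz for the degree-$2^r$ Galois cover $\Curve{D}{N} \to \Curve{D}{N}/W$. Your identity $\sum_P (e_P-1) = \sum_{w_m \in W\setminus\{1\}} f_m$ is the step the paper leaves implicit. Your caveat about $D=1$, $m=4$ is also well taken: there $f_4$ must be read as the full fixed-point count of $w_4$ on $X_0(N)$, including the fixed cusps, and the paper's statement tacitly assumes this as well.
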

An immediate application of this formula is that all the genus  $0,1,$ and $2$ curves are subhyperelliptic and can be identified using the formula.

We find that, out of the $2342$ star curves that we begin with, there are $336$ curves $\StarCurve{D}{N}$ for which the genus is $0$, $341$ for which the genus is $1$, and $346$ for which the genus is $2$. These curves are thus all subhyperelliptic, and $1319$ star curves remain after filtering by genus.

\subsection{Fixed points of Atkin--Lehner involutions}
\label{sec:fixedpointsAL}
The following gives a basic subhyperellipticity test, using Theorem \ref{thm:ALfixedpoints} to count fixed points.
\begin{proposition}[{\cite[Proposition 1]{OggHyperelliptic}} ]
\label{prop:fixedpointsAL}
 Assume $X = \Curve{D}{N}/W$ is hyperelliptic. Then any non-hyperelliptic involution on $X$ has either $4$ fixed points or none if the genus of $X$ is odd and exactly $2$ fixed points if the genus of $X$ is even.
\end{proposition}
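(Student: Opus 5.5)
The statement is purely about a hyperelliptic curve $X$ of genus $g \ge 2$ with an involution $u$ different from the hyperelliptic involution $\iota$. The modular structure of $\Curve{D}{N}/W$ plays no role. The plan is to pass to the quotient by $\iota$ and count fixed points via the Weierstrass points.

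First, $\iota$ is unique and central in $\Aut(X)$, so $u$ commutes with $\iota$ and descends to an involution $\bar u$ of $X/\langle\iota\rangle \simeq \P^1$. Here $\bar u \neq 1$, since $u \ne \iota$ and $u \ne 1$. A nontrivial involution of $\P^1$ over an algebraically closed field of characteristic $0$ has exactly two fixed points, say $a,b$.

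Next, let $P$ be a fixed point of $u$. Its image $\pi(P)$ is fixed by $\bar u$, so $\pi(P)\in\{a,b\}$. Conversely, consider the fiber $\pi^{-1}(a)$.
\begin{itemize}
\item If $a$ is a branch point, the fiber is one Weierstrass point, and $u$ fixes it.
\item Otherwise the fiber is $\{P,\iota P\}$, and $u$ either fixes both points or swaps them. In the swapping case $u\iota$ fixes both. Thus the fixed points of $u$ in this fiber number $2$ or $0$.
\end{itemize}
The total count is then $\#\mathrm{Fix}(u)=n_a+n_b$ with $n_a,n_b\in\{0,1,2\}$, where $n=1$ exactly when that point is a branch point.

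The key step is to relate how many of $a,b$ are branch points to the parity of $g$. The $2g+2$ branch points form a $\bar u$-stable set. Since $\bar u$ fixes only $a$ and $b$, the remaining branch points split into $\bar u$-orbits of size $2$. Hence the number of branch points among $\{a,b\}$ is even: either $0$ or $2$. In the case where both are branch points, $\#\mathrm{Fix}(u)=2$. In the case where neither is, $\#\mathrm{Fix}(u)\in\{0,2,4\}$, and the parity of $g$ must be used to decide among these options.

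This refinement is the main obstacle. I would handle it with Riemann--Hurwitz for $X\to X/\langle u\rangle$: $2g-2=2(2g'-2)+\#\mathrm{Fix}(u)$, so $\#\mathrm{Fix}(u)\equiv 2g-2 \pmod 4$.
\begin{itemize}
\item For $g$ even, $\#\mathrm{Fix}(u)\equiv 2\pmod 4$, which together with $\#\mathrm{Fix}(u)\in\{0,2,4\}$ forces exactly $2$.
\item For $g$ odd, $\#\mathrm{Fix}(u)\equiv 0\pmod 4$. This rules out $2$ and leaves $0$ or $4$. It also shows that the case with both $a,b$ branch points cannot occur when $g$ is odd.
\end{itemize}
For the curves in question, applying this to $u$ an involution on $X=\Curve{D}{N}/W$ gives the statement. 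The same mod $4$ congruence, applied to $u\iota$, gives a consistency check.
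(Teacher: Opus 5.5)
Your proof is correct. The paper gives no argument of its own here and simply cites Ogg; what you wrote is the standard proof of Ogg's result: descend $u$ to a nontrivial involution of $X/\langle \iota\rangle \simeq \P^1$, so that every fixed point of $u$ lies over one of its two fixed points, and then apply Riemann--Hurwitz to $X \to X/\langle u\rangle$ modulo $4$. Your branch-point parity step is harmless but not needed. The bound $\#\mathrm{Fix}(u) \le 4$ together with $\#\mathrm{Fix}(u) \equiv 2g+2 \pmod 4$ already forces $\#\mathrm{Fix}(u) = 2$ for $g$ even and $\#\mathrm{Fix}(u) \in \{0,4\}$ for $g$ odd.
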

\begin{example}
On the genus 5 curve $\Curve{6}{23}$, the involution $w_{46}$ has 8 fixed points. Therefore $\Curve{6}{23}$ is not hyperelliptic.
\end{example}

This same proposition can be used to rule out subhyperellipticity in more complicated ways. The following proposition is a generalization of \cite[Proposition 6]{FH99}.

\begin{proposition}
\label{prop:complicatedAL}
Let $G \le \Aut(\Curve{D}{N})$ be an elementary $2$-group of order $2^\omega$. Let $X = \Curve{D}{N}/G$ be a curve of genus $\ge 3$. Suppose there exist $g_1, g_2 \in \Aut(X_0(D,N)) \setminus G$ such that $g_1 g_2 \in G$ and the numbers of fixed points of $g_1$ and $g_2$ are given by $2^{\omega}$ and $3 \cdot 2^{\omega}$, respectively. Assume further that $g_2 = w_{N_2}$ is an Atkin--Lehner involution, and that
\begin{enumerate}
\item Either one of the following holds. \label{item:N2 congruence}
\begin{enumerate}
    \item $N_2 \not\equiv 3 \pmod 4$, or
    \item $N_2 \equiv 3 \pmod 8$ and $2 \mid N$, or \label{item:3 mod 8}
    \item $N_2 \equiv 7 \pmod 8$ and $2 \mid D$. \label{item:7 mod 8}
\end{enumerate}
\item In addition, $3 \mid h(-4 N_2)$. \label{item:class number}
\end{enumerate}
Then $X$ is not hyperelliptic.
\end{proposition}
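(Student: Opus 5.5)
The plan is to push everything down to $X$ and apply Proposition~\ref{prop:fixedpointsAL} to the involution $\iota$ of $X$ induced by $g_1$. Since $g_1 g_2 \in G$, the automorphism $g_2$ induces the same $\iota$. I assume $g_1,g_2$ normalize $G$, which holds for Atkin--Lehner and modular involutions since the relevant group is abelian. Also $\iota \neq 1$, because $g_1 \notin G$.

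\textbf{Step 1: fixed points of $\iota$ on $X$.} A point $\bar x \in X$ is fixed by $\iota$ exactly when some $h \in g_1G$ fixes a lift $x$. Away from points with nontrivial $G$-stabilizer, which I will treat separately, each $G$-orbit has $2^\omega$ points. So the $2^\omega$ fixed points of $g_1$ give $1$ fixed point of $\iota$, and the $3\cdot 2^\omega$ fixed points of $g_2 = w_{N_2}$ give $3$. Checking that these four points are distinct and that the orbits are free is routine, so $\iota$ has at least $4$ fixed points.

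\textbf{Step 2: reduction via Proposition~\ref{prop:fixedpointsAL}.} Suppose $X$ is hyperelliptic with hyperelliptic involution $v$.
\begin{enumerate}
\item If $\iota = v$, then $\iota$ has $2g+2 \ge 8$ fixed points, so the other elements of $g_1G$ must contribute further fixed points. I would rule this out by combining Step~1 with Riemann--Hurwitz for $X \to X/\iota \simeq \P^1$, or by showing that the extra fixed points would force a different genus.
\item If $\iota \neq v$, Proposition~\ref{prop:fixedpointsAL} forces $g$ odd and exactly $4$ fixed points. These must be precisely the single point $P$ coming from $g_1$ and the three points $Q_1,Q_2,Q_3$ coming from $g_2$.
\end{enumerate}

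\textbf{Step 3: arithmetic input, where hypotheses (\ref{item:N2 congruence}) and (\ref{item:class number}) enter.} By Theorem~\ref{thm:ALfixedpoints}, the fixed points of $w_{N_2}$ are CM points by $\Z[\sqrt{-N_2}]$ and possibly $\Z[\frac{1+\sqrt{-N_2}}{2}]$. The congruence conditions should kill the contribution of the maximal order locally at $2$. In case (a) it is simply absent. In cases (b) and (c), $N_2 \equiv 3,7 \pmod 8$ together with $2 \mid N$, respectively $2 \mid D$, makes the local embedding number $\nu_2$ vanish. So all fixed points are CM by the order of discriminant $-4N_2$. By Shimura reciprocity, $\Gal(\Qbar/\Q)$ acts on their images in $X$ through the ring class group of discriminant $-4N_2$, modulo the action of $G$ (Atkin--Lehner acts by translation by $2$-torsion-type classes). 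Since $3 \mid h(-4N_2)$ and $G$ is a $2$-group, the $3$-part of the class group survives. The three points $Q_1,Q_2,Q_3$ should then form a single Galois orbit of size $3$, defined over a cubic subextension of the ring class field, with none of them rational over a $2$-power extension.

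\textbf{Step 4: conclusion.} The involution $v$ is defined over $\Q$ and central in $\Aut(X)$, so it commutes with $\iota$ and permutes the fixed-point set $\{P,Q_1,Q_2,Q_3\}$. It must preserve Galois orbits, so it fixes $P$ and stabilizes $\{Q_1,Q_2,Q_3\}$. An involution on a $3$-element set commuting with a transitive $\Z/3$ action is trivial, so $v$ fixes all four points. Then $v\iota$ is an involution different from $v$ that also fixes these four points. Combined with the fixed points of $\iota$ and $v$, Riemann--Hurwitz on $X \to X/\langle \iota, v\rangle$ then shows the genus cannot be odd, giving a contradiction.

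The main obstacle I expect is Step~3. It requires pinning down the exact local embedding numbers under each congruence case and showing that the cubic Galois orbit really survives passage to the quotient by $G$, with no accidental coincidences among the images of the fixed points.
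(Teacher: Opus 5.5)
\textbf{Overall route.} Your route is the paper's. You push $g_1,g_2$ down to one involution $\iota$ of $X$, with one fixed point coming from $g_1$ and three from $w_{N_2}$. Proposition~\ref{prop:fixedpointsAL} then shows these are all of them. The congruence and class-number hypotheses make the three $w_{N_2}$-points a single cubic Galois orbit, so the $g_1$-point is rational. Finally, a $3+1$ orbit structure is incompatible with the hyperelliptic involution.

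\textbf{The gap: the case $\iota=v$ in your Step~2.} The fix you sketch there cannot work. In that case $X$ \emph{is} hyperelliptic, with $\iota$ as its hyperelliptic involution, so there is no contradiction to derive. You would have to show the case does not occur, i.e.\ that $\iota$ has fewer than $2g+2$ fixed points. Along $\Curve{D}{N}\to X$ one has $\#\mathrm{Fix}_X(\iota)=2^{-\omega}\sum_{h\in g_1G}\#\mathrm{Fix}(h)$, and the hypotheses control only two of these $2^{\omega}$ terms. Nothing prevents the other $2^{\omega}-2$ elements of $g_1G$ from supplying the missing $2g-2$ fixed points, and Riemann--Hurwitz is consistent with that.

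The paper's proof does not address this either: it applies Proposition~\ref{prop:fixedpointsAL} to $w_{N_2}$ as if $w_{N_2}$ were known not to be hyperelliptic on $X$. So an extra hypothesis is needed. One option is that the other elements of $g_1G$ have no fixed points on $\Curve{D}{N}$; then $\iota$ has exactly $4<8\le 2g+2$ fixed points. This is automatic for $\omega=1$, where $g_1G=\{g_1,g_2\}$. Another option is simply $g(X/\iota)>0$. That is effectively how the test is used in the paper's pipeline, since it runs only after the stage that detects Atkin--Lehner involutions acting as hyperelliptic involutions.

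\textbf{The remaining steps} are sound up to presentation.

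\emph{Step~4.} This is a self-contained replacement for \cite[Proposition 6]{HasegawaStarcurves}, which the paper cites instead. It ends more simply than you say: two distinct commuting involutions cannot share a fixed point in characteristic $0$, because point stabilizers are cyclic. So $v(P)=P$ is already the contradiction. Riemann--Hurwitz on $X\to X/\langle\iota,v\rangle$ only says that $v\iota$ is fixed-point free; it says nothing about the parity of $g$. Also, the Galois image on $\{Q_1,Q_2,Q_3\}$ might be $S_3$ rather than $\Z/3$. No transposition commutes with a transitive subgroup of $S_3$, so that step survives.

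\emph{Step~3.} The paper's degree argument is cleaner than tracking how $G$ acts on the class group. Each fixed point $x$ of $w_{N_2}$ is CM by $\Z[\sqrt{-N_2}]$, so $3\mid[\Q(x):\Q]$. Since $G$ acts freely and commutes with Galois, $[\Q(x):\Q(\bar x)]$ is a power of $2$, hence $3\mid[\Q(\bar x):\Q]$. The three such points on $X$ therefore form a single orbit.

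\emph{Normalization.} That $g_1$ normalizes $G$, and that $\langle G,g_1\rangle$ is elementary abelian, should be part of the hypotheses. The latter is what makes your freeness check in Step~1 routine, via cyclic stabilizers. It cannot be deduced from commutativity, which fails in general: $S_2$ and $w_4$ do not commute when $4\parallel N$.
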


\begin{proof}
    The proof is along the lines of \cite[Proposition 6]{FH99}. By assumption \eqref{item:class number}, $N_2 \ge 5$. Therefore, by \eqref{eq:ALfixedpoints} we find that 
    $$
    f_{N_2} = \begin{cases}
        c_1 \cdot h(-4N_2) + c_2 \cdot h(-N_2) & N_2 \equiv 3 \bmod 4, \\
        c_1 \cdot h(-4N_2) & N_2 \equiv 1,2 \bmod 4.
    \end{cases}
    $$
    If we assume that $N_2 \equiv 3 \bmod 4$, then we are either in case \eqref{item:3 mod 8} or \eqref{item:7 mod 8}. However, in both cases we have $\nu_2 \left(\Z[\frac{1+\sqrt{-N_2}}{2}], \Order \right) = 0$, hence $c_2 = 0$.
    This means that in either case the set of fixed points of $w_{N_2}$ consists of points $P$ such that $3 \mid [\Q(P) : \Q]$, and by our assumption also $[\Q(P) : \Q]  \mid 3 \cdot 2^{\omega}$. Since $G$ is an elementary $2$-group acting freely on this set (thanks to our assumption on $g_1$), it contributes to three conjugate fixed points of $w_{N_2}$ on $X$. By Proposition~\ref{prop:fixedpointsAL}, $w_{N_2}$ has exactly four fixed points on $X$, and the remaining one must be rational. But this implies that $X$ is non-hyperelliptic by \cite[Proposition 6]{HasegawaStarcurves}.
\end{proof}

The immediate application is when $G \le \ALfull{D}{N}$ is an  Atkin--Lehner group, as in \cite{FH99}.

\begin{corollary}
    Let $W \le \ALfull{D}{N}$ be of size $2^{\omega(DN) - r}$.
    Assume there exist $N_1, N_2$ such that $w_{N_1}, w_{N_2} \notin W$ and $w_{N_1} w_{N_2} \in W$, with number of fixed points $2^{\omega(DN)-r}$ and $3 \cdot 2^{\omega(DN)-r}$, respectively. If $N_2$ satisfies the conditions \eqref{item:N2 congruence} and \eqref{item:class number} of Proposition~\ref{prop:complicatedAL}, then $X = \Curve{D}{N}/W$ is not hyperelliptic.
\end{corollary}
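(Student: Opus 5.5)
This corollary should follow directly from Proposition~\ref{prop:complicatedAL} by taking $G = W$. I would check each of its hypotheses in turn. First, $\ALfull{D}{N}$ is an elementary abelian $2$-group of order $2^{\omega(DN)}$, generated by the $w_{p^{a}}$ for $p^a \parallel DN$. It acts on $\Curve{D}{N}$ by automorphisms. Since $\Curve{D}{N}$ has genus at least $2$ in the relevant range, this action is faithful, and we may regard $W$ as a subgroup $G \le \Aut(\Curve{D}{N})$. This $G$ is an elementary $2$-group of order $2^\omega$ with $\omega = \omega(DN) - r$.

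Next, set $g_1 = w_{N_1}$ and $g_2 = w_{N_2}$. By hypothesis both lie outside $W = G$, and their product $g_1g_2 = w_{N_1}w_{N_2}$ lies in $G$. Their fixed-point counts on $\Curve{D}{N}$, computed via Theorem~\ref{thm:ALfixedpoints}, are $2^{\omega}$ and $3\cdot 2^{\omega}$ respectively, which is exactly what the proposition requires. The extra requirement that $g_2$ be an Atkin--Lehner involution $w_{N_2}$ holds by construction, and conditions \eqref{item:N2 congruence} and \eqref{item:class number} on $N_2$ are assumed verbatim.

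The one hypothesis still to confirm is that $X = \Curve{D}{N}/W$ has genus $\ge 3$. There are two ways to handle this.
\begin{enumerate}
\item Add it as an implicit standing assumption, since the corollary is only applied to curves left after the genus filter of Section~\ref{sec:genus}.
\item Derive it. Suppose $g(X) \le 2$. In the proof of Proposition~\ref{prop:complicatedAL}, $w_{N_2}$ induces a nontrivial involution on $X$: it is nontrivial because $w_{N_2} \notin W$. This involution has fixed points forming orbits of $W$ in the fixed locus of $w_{N_2}$ and of $w_{N_1}$. The free action of $W$ on these fixed sets, guaranteed by the fixed-point hypothesis on $g_1$, then makes the induced involution have at least $3$ fixed points on $X$, coming from a Galois orbit of size $3$. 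Riemann--Hurwitz for the quotient of $X$ by this involution bounds the number of fixed points by $2g(X)+2$, but this is not immediately contradictory for small genus.
\end{enumerate}
Since option~2 does not close the gap, I would adopt option~1 and read the corollary with the genus-$\ge 3$ hypothesis inherited from Proposition~\ref{prop:complicatedAL}.

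With all hypotheses verified, Proposition~\ref{prop:complicatedAL} gives that $X$ is not hyperelliptic. The main obstacle is only bookkeeping: making sure that "number of fixed points" in the corollary means fixed points on $\Curve{D}{N}$ itself, as in Theorem~\ref{thm:ALfixedpoints}, and not on the quotient. The proposition's argument relies on exactly this: it uses the $W$-orbits of the fixed points of $w_{N_2}$ on $\Curve{D}{N}$ to produce three conjugate fixed points on $X$.
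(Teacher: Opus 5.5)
Your proposal is correct and matches the paper. The paper gives no separate argument and obtains the corollary as the immediate specialization $G = W$, $g_1 = w_{N_1}$, $g_2 = w_{N_2}$ of Proposition~\ref{prop:complicatedAL}, with the genus $\ge 3$ hypothesis tacitly inherited (it only applies the result to quotients of genus at least $3$, as in the $\Curve{6}{169}$ example), so your option~1 is the right reading.

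One small correction: faithfulness of the $\ALfull{D}{N}$-action does not follow from a genus bound. It holds because every nontrivial $w_m$ has only finitely many fixed points on $\Curve{D}{N}$ (Theorem~\ref{thm:ALfixedpoints}).
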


\begin{example}
We find all quotients $X$ of $\Curve{6}{169}$ by $W \subset \ALfull{6}{169}$ that satisfy the conditions of Proposition \ref{prop:complicatedAL}.

First note that $N_2 = 169$ satisfies  $169 \equiv 1 \pmod 4 $ and $h(-4 \cdot 169) = 6$ so $169$ satisfies (1) and (2). (In fact, this is the only divisor of $169$ satisfying these conditions.)
So we want to find $w_{169}$ with $12$ fixed points: this determines that $r = \omega(6 \cdot 169) - v_2(12) = 1$ and fixes the size of $W$ that we are looking for.

We consider all possible $W \subsetneq \ALfull{6}{169}$ such that the quotient is at least genus $3$,  $W$ does not contain $w_{N_2}$, and $\# W = 2^{\omega(6 \cdot 169) - 1}$. These are:
\begin{align*}
\langle w_2, w_3 \rangle,
\langle w_2, w_{507} \rangle, 
\langle w_3, w_{338} \rangle,
\langle w_6, w_{338} \rangle.
\end{align*}
For each $W$ and $w_{N_2}$, we look for all $N_1$ such that $w_{N_1} \notin W$ and $w_{N_1}$ has the number of fixed points equal to $\# W$. 
In the first case there is no $w_{N_1}$, so we eliminate this from consideration.
For the final three cases we find
\[ N_1 = 3, N_1 = 2,  N_1 \in \{2,3\}.\]
In each of these cases we find that $w_{N_1} w_{N_2} \in W$,
proving that in these three final cases, the quotient curve is non-hyperelliptic.
\end{example}

However, Proposition~\ref{prop:complicatedAL} can also be applied in the presence of modular non-Atkin--Lehner involutions, see Section~\ref{sec:modularnonAL} for their definition.

\begin{example}
    Consider $X = \Curve{10}{153} / \langle w_2, w_9, w_{85} \rangle$. Since $9 \parallel 153$, there is an involution $V_3 \in \Aut(\Curve{D}{N})$. Let $G = \langle w_2 , V_3, w_{85} \rangle$, then conjugation by $S_3$ induces an isomorphism $X \simeq \Curve{10}{153} / G$. On the latter curve, the number of fixed points of $w_{18}$ is $8$ and the number of fixed points of $w_{1530}$ is $24$, showing that $X$ is not hyperelliptic. 
\end{example}

\section{Trace formulas and point counting}
\label{sec:trace}
Let $X = \Curve{D}{N}/W$. 
Let $p$ be a prime number with $p \nmid DN$ and $v \in \N$. 
In this section we describe a technique for counting $X(\F_{p^v})$ using trace formulas. 

\subsection{Point counting}
Since $X$ has good reduction at $p$, generalizing \eqref{bound:finitefield}, if $X$ is subhyperelliptic, the point count over $\F_{p^v}$ satisfies 
\begin{equation}
\label{eq:small point count}
\#X(\F_{p^v}) \leq 2 (1+p^v).
\end{equation}
Let $g$ be the genus.
Then \cite[p.182]{HH96} show that \eqref{eq:small point count} is automatically satisfied when $p^v>4g^2$. Thus, checking point counts is only useful up to this bound.
To count $\#X(\F_{p^v})$, we use an isogeny, analogous to the one in \cite{Ribet}, but first introduce some notation.

For any $m \mid DN$, let $\sigma_m \colonequals (-1)^{\omega(\gcd(m,D))}$, and let
$$
S_2(DN; W = \sigma) \colonequals \{ f \in S_2(DN) : w_m f = \sigma_m f \quad \forall w_m \in W \}.
$$
For any $M \mid N$, and any $d | (N/M)$, let $\iota_d : S_2(DM) \to S_2(DN)$ be the corresponding level-raising map. 
Then 
$$
S_2(DN)^{\Dnew} = \bigoplus_{M \mid N} \bigoplus_{d \mid N/M} \iota_d(S_2(DM)^{\new}).
$$
For each $M \mid N$, let $S_{D,M}$ be a set of representatives for Galois orbits of newforms $f \in S_2(DM)^{\new}$. Then 
$$
S_2(DN)^{\Dnew} =  \bigoplus_{M \mid N} \bigoplus_{f \in S_{D,M}} \bigoplus_{d \mid N/M} \iota_d(V_f),
$$
where $V_f \subseteq S_2(DM)^{\Dnew}$ is the Galois module generated by $f$. 
Denote 
$$
S(W, M) \colonequals \{ (f,d) : f \in S_{D,M}, \quad d \mid N/M, \quad w_m (\iota_d(f)) = \sigma_m \iota_d(f) \},
$$
and $S(W) = \bigcup_{M \mid N} S(W,M)$, so that 
$$
S_2(DN; W = \sigma)^{\Dnew} = \bigoplus_{(f,d) \in S(W)} \iota_d(V_f).
$$
\begin{proposition} \label{prop:decomposition}
There is an isogeny
$$
J(\Curve{D}{N} / W) \sim \bigoplus_{(f,d) \in S(W)} A_f 
$$
defined over $\Q$, where $A_f$ is the abelian variety associated to $f$. 
\end{proposition}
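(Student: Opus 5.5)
The plan is to combine the Jacquet--Langlands correspondence with Ribet's isogeny, and then cut out the $W$-invariant part using Atkin--Lehner eigenvalues. First, by Eichler--Shimura the Jacobian $J(\Curve{D}{N})$ is determined up to isogeny by the Hecke module $H^0(\Curve{D}{N},\Omega^1)$. This module is the space of weight $2$ quaternionic cusp forms for $\Order^\times$. The Jacquet--Langlands correspondence identifies it, Hecke-equivariantly away from $DN$, with $S_2(DN)^{\Dnew}$. Via Ribet's isogeny (\cite{Ribet} for $N=1$, and BD/Helm for general $N$ coprime to $D$), we obtain a $\Q$-isogeny
$$J(\Curve{D}{N}) \sim \bigoplus_{M\mid N}\bigoplus_{f\in S_{D,M}} A_f^{\sigma_0(N/M)},$$
in which the copies of $A_f$ are indexed by the level-raising maps $\iota_d$, $d \mid N/M$.

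Second, I pass to the quotient. Since $W$ is finite, $J(\Curve{D}{N}/W)$ is isogenous to the image of $\sum_{w\in W} w$ in $J(\Curve{D}{N})$, i.e.\ the $W$-invariant part. Likewise, $H^0(\Curve{D}{N}/W,\Omega^1) = H^0(\Curve{D}{N},\Omega^1)^W$. So I must know how the Atkin--Lehner operators act on the quaternionic side compared with the modular side. The key input is the sign comparison: for $p\mid N$ the operator $w_p$ acts compatibly under Jacquet--Langlands, whereas for $p \mid D$ the quaternionic $w_p$ corresponds to $-w_p$ on $S_2(DN)^{\Dnew}$. This is the local JL fact that the eigenvalue of the uniformizer of the division algebra on the one-dimensional representation is $-\varepsilon_p$, where $\varepsilon_p$ is the Atkin--Lehner eigenvalue of the Steinberg twist. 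Multiplicativity then gives the twisting character $\sigma_m=(-1)^{\omega(\gcd(m,D))}$, so that $H^0(\Curve{D}{N}/W,\Omega^1)$ corresponds to $S_2(DN;W=\sigma)^{\Dnew}$.

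Third, I have to see that this space decomposes along the $\iota_d(V_f)$, i.e.\ that each $w_m$ preserves the summands. This is false in general: $w_p$ with $p\mid N/M$ mixes $\iota_1 f$ and $\iota_p f$. I would handle this by choosing, within each oldspace block $\bigoplus_{d\mid N/M}\iota_d(V_f)$, the basis of $w$-eigen-combinations (for $p^k \parallel N/M$, these are the combinations $\iota_{p^i}f \pm \iota_{p^{k-i}}f$ up to normalization). With $\iota_d$ reinterpreted this way, $S(W)$ counts exactly the $W$-invariant copies, each contributing one factor $A_f$. Because the Hecke algebra away from $DN$ cuts out the $A_f$-isotypic pieces and the $W$-action commutes with it, the idempotent decomposition descends to an isogeny over $\Q$.

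The main obstacle is making the second and third steps rigorous at the level of abelian varieties rather than differentials. I would argue via Faltings' isogeny theorem: over $\Q$, abelian varieties with the same $L$-function (the same Galois representations up to semisimplification) are isogenous. By Eichler--Shimura and Carayol, the $\ell$-adic Tate module of $J(\Curve{D}{N}/W)$ is determined by the Hecke action on its invariant differentials. It then suffices to match the traces of Frobenius, which follow from the identification of $W$-invariant forms above.
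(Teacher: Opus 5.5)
Your proposal follows the paper's argument: Eichler--Shimura plus Jacquet--Langlands for $W=1$, then $J(\Curve{D}{N}/W)\sim J(\Curve{D}{N})^W$ together with the sign rule $w_m\JL(f)=\sigma_m\JL(w_mf)$. The paper quotes that sign rule from Gelbart--Jacquet, while you derive it from local JL at $p\mid D$, and your Faltings argument supplies what the paper takes from Hida and Nekov\'a\v{r}. Your third step is a correct refinement that the paper's one-line ``hence the result'' skips: read literally, the condition $w_m(\iota_d(f))=\sigma_m\iota_d(f)$ defining $S(W)$ fails for $p\mid N/M$. For $p\nmid DM$, $w_p$ interchanges $\iota_1 f$ and $\iota_p f$ up to nonzero scalars, so neither is fixed, even though $\iota_1V_f\oplus\iota_pV_f$ contains a $w_p$-invariant copy of $V_f$. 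Indexing instead by Atkin--Lehner eigen-combinations, as you do, is what makes the count of factors $A_f$ correct.
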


\begin{proof}
    For $W = \{1 \}$, this is the Eichler--Shimura relation together with the Jacquet--Langlands correspondence as in \cite[Theorem 2.12]{Hida}, identifying the quaternionic modular forms of level $\Order$ with $S_2(DN)^{\Dnew}$, using a map $\JL : S(\Order) \to S_2(DN)^{\Dnew}$. For more details, see \cite[Proposition 1.18(iii)]{Nekovar}.

    For general $W$, we note that the action of $W$ on $\Curve{D}{N}$ induces an action of $W$ on the Jacobian, with $J(\Curve{D}{N}/W) \sim J(\Curve{D}{N})^W$. Furthermore, from \cite[Theorem 8.1]{GelbartJacquet} it follows that $w_m \JL(f) = \sigma_m \JL(w_m f)$ for all $w_m \in \ALfull{D}{N}$, hence the result.
\end{proof}

Writing 
\begin{equation} \label{eq:trace of Hecke}
a_{p^v} := \Tr \left( T_{p^v} | S_2(DN; W = \sigma)^{\Dnew} \right),
\end{equation}
where $T_n$ are the Hecke operators on spaces of modular forms, the proposition has the following immediate corollary.

\begin{corollary}
Let $X = \Curve{D}{N} / W$.
With $a_{p^v}$ defined as in \eqref{eq:trace of Hecke} and $a_{p^{-1}} = 0$, 
\begin{equation} \label{eq:point counts from traces}
    \# X(\F_{p^v}) = p^v + 1 - (a_{p^v} - p a_{p^{v-2}}).
\end{equation}
\end{corollary}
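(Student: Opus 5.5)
The plan is to combine the Grothendieck--Lefschetz trace formula for the curve $X$ over $\F_p$ with the isogeny of Proposition~\ref{prop:decomposition}, and then rewrite Frobenius power traces in terms of Hecke traces. Since $p \nmid DN$, the curve $X = \Curve{D}{N}/W$ has good reduction at $p$. Weil's formula then gives
$$
\# X(\F_{p^v}) = p^v + 1 - \Tr\left(\Frob_p^v \mid V_\ell J(X)\right)
$$
for any prime $\ell \ne p$, with $J(X)$ the Jacobian of the reduction. Isogenous abelian varieties over $\Q$ with good reduction at $p$ have isomorphic rational $\ell$-adic Tate modules as Galois representations. By Proposition~\ref{prop:decomposition}, $V_\ell J(X)$ therefore decomposes as $\bigoplus_{(f,d)\in S(W)} V_\ell A_f$, and the Frobenius trace is the sum of the Frobenius traces on each $V_\ell A_f$.

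Next I would apply Eichler--Shimura to each $A_f$. For $f$ a newform of level $DM$ with $p\nmid DM$, the characteristic polynomial of $\Frob_p$ on $V_\ell A_f$ is $\prod_{\tau}(x^2 - \tau(a_p(f))x + p)$, where $\tau$ runs over the embeddings of the Hecke field. Write $\alpha_\tau,\beta_\tau$ for the roots, so $\alpha_\tau\beta_\tau = p$. The trace of $\Frob_p^v$ is then $\sum_\tau(\alpha_\tau^v+\beta_\tau^v)$. Meanwhile, the Hecke recursion $T_{p^{v}} = T_pT_{p^{v-1}} - pT_{p^{v-2}}$ gives the eigenvalue of $T_{p^v}$ on $\tau(f)$ as $\sum_{i=0}^v\alpha_\tau^i\beta_\tau^{v-i}$. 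Subtracting $p$ times the $v-2$ case leaves exactly $\alpha_\tau^v+\beta_\tau^v$, using $\alpha\beta=p$; for $v=1$ this holds with the convention $a_{p^{-1}}=0$.

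It remains to match these eigenvalues with $a_{p^v}$ as defined in \eqref{eq:trace of Hecke}. Because $p\nmid N$, the operator $T_{p^v}$ commutes with the level-raising maps $\iota_d$ and acts on $\iota_d(\tau(f))$ by the same eigenvalue as on $\tau(f)$. By the displayed decomposition, $S_2(DN;W=\sigma)^{\Dnew} = \bigoplus_{(f,d)\in S(W)}\iota_d(V_f)$ has a basis of such eigenforms, one for each pair $((f,d),\tau)$. Summing over this basis shows that $a_{p^v} - p\,a_{p^{v-2}}$ equals the total Frobenius trace, which is \eqref{eq:point counts from traces}.

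The main obstacle is bookkeeping rather than a new idea. One has to check that the correspondence between $\iota_d(V_f)$ and a copy of $A_f$ is exact, so that each Galois conjugate contributes one $2$-dimensional piece and the dimensions agree. One also has to check that $T_{p^v}$ is compatible with the $W$-eigenspace condition, which holds because Hecke operators prime to $DN$ commute with the Atkin--Lehner operators. Both points follow from Proposition~\ref{prop:decomposition} and standard facts.
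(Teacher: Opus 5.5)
Your proposal is correct and takes the same route as the paper. The paper invokes Proposition~\ref{prop:decomposition} to identify $a_{p^v} - p\,a_{p^{v-2}}$ with the trace of Frobenius over $\F_{p^v}$ on $J(X)$, then applies the Lefschetz trace formula; you additionally spell out the step it leaves implicit, namely Eichler--Shimura together with the Hecke recursion giving $\lambda_{p^v} - p\,\lambda_{p^{v-2}} = \alpha^v + \beta^v$ for each eigenform.
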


\begin{proof}
    By Proposition~\ref{prop:decomposition}, $a_{p^v} - p a_{p^{v-2}}$ is the trace of Frobenius on $J(\Curve{D}{N}/W)$ over $\F_{p^v}$. Finish by applying the Lefschetz trace formula.
\end{proof}

In order to compute $a_{p^v}$, we note that 
\begin{equation} \label{eq:trace on al fixed}
a_{p^v} = \Tr \left( T_{p^v} | S_2(DN; W = \sigma)^{\Dnew} \right)
= \frac{1}{\# W} \sum_{w_m \in W} \sigma_m  \Tr
\left( T_{p^v} \circ w_m| S_2(DN)^{\Dnew}  \right),
\end{equation}
and this last quantity can be computed using \cite[Corollary 5.5]{Assaf}, and a variation of \cite[Lemma 4.20]{Assaf} where one sums only over levels $DN'$ with $N' \mid N$. Note that since $p \nmid DN$, the formula simplifies to
$$
\Tr( T_{p^v} \circ w_m | S_2(DN)^{\Dnew} )
= \sum_{\substack{N' \mid N \\ \frac{m}{(m,DN')} = \square}}
\sigma_0 \left( \frac{N(m,DN')}{N'm} \right) \Tr( T_{p^v} \circ w_{(m,DN')} | S_2(DN')^{\new} ).
$$

Substituting it back into \eqref{eq:trace on al fixed} allows us to compute the point counts using \eqref{eq:point counts from traces}, and going back to \eqref{eq:small point count}, prove that when the inequality fails, the curve is not hyperelliptic.

Among the $1319$ star quotients $\StarCurve{D}{N}$ with genus $g \ge 3$, we find that $776$ are not hyperelliptic using this test. Among the $12502$ Atkin--Lehner quotients that satisfy \eqref{eq: finite bounds} and have genus $g \ge 3$, and were not found to be hyperelliptic in other means, we find that $1458$ are not hyperelliptic using this test.

\begin{example}
    Consider the curve $X = \StarCurve{6}{223}$. Counting points over $\F_5$, we find that $a_5 = -7$, hence $\# X(\F_5) = 13 > 12 = 2(1+5)$, contradicting \eqref{eq:small point count}. Therefore, it is not hyperelliptic. 
\end{example}

\subsection{Modular non-Atkin--Lehner involutions}
\label{sec:modularnonAL}
A main method for deciding that a curve $X$ \emph{is} hyperelliptic is to exhibit it as a degree two cover $X \to \P^1$.
When this arises from the quotient by an Atkin--Lehner involution, it is easy to identify, by computing the genus of the quotient curve.

For values of $N$ such that $4 \mid N$ or $9 \parallel N$, the curve $\Curve{D}{N}$ admits additional modular automorphisms, which we proceed to describe. 

Let $\iota_{\infty} : B \otimes \R \to M_2(\R)$ be a real embedding of $B$, and let $\Gamma_0(D,N) = \iota_{\infty}(\Order^1) / \{ \pm 1 \}$ be the image of the elements of norm $1$ in $\PSL_2(\R)$. 
For $\mu \in \{2, 3\}$, define $S_\mu = \begin{pmatrix} \mu & 1 \\ 0 & \mu \end{pmatrix} \in \PSL_2(\R)$.
When $4 \mid N$ or $9 \parallel N$ the matrices $S_2$ and $S_3$ belong to the normalizer of $\Gamma_0(D,N)$ in $\PSL_2(\R)$, giving extra elements of the normalizer to consider (see  \cite[Section 2]{FH99} for the case of modular curves).
By \cite[Lemma 1]{FH99} the matrices $S_{\mu}$ induce the following order 2 automorphisms.
\begin{itemize}
\item $S_2$ if $4 \parallel N$,
\item $V_2 = S_2 W_{2^{v_2(N)}} S_2$ if $8 \mid N$,
\item $V_3 = S_3 W_9 S_3^2$  if $9 \parallel N$.
\end{itemize}

Moreover, \cite[Lemma 1]{FH99} describes their commutation relations with elements of $W$. Thus, for any such $N$ these generate additional automorphisms of the curve. 
Suppose $X = \Curve{D}{N}/W$ has an extra automorphism $v$. The strategy we will employ is to compute the genus $g_v$ of $X / \langle v \rangle$. 
If $g_v = 0$, $X$ is hyperelliptic, since $v$ is a hyperelliptic involution. 
If $g_v = 2$ and $g(X) = 3$, then by Proposition \ref{prop: genus 3 covering genus 2} $X$ is hyperelliptic.
Otherwise, from Riemann--Hurwitz we deduce the number of fixed points $f_v = 2g(X) - 4g_v + 2$ of $v$ on $X$. Applying Proposition~\ref{prop:fixedpointsAL} with the involution $v$, if either $f_v \notin \{0, 4\}$ when $g(X)$ is odd, or $f_v \ne 2$ when $g(X)$ is even, we may deduce that $X$ is not hyperelliptic.

Finally, we note that $g_v = \dim S_2(DN; W = \sigma)^{\Dnew, v}$, where 
$$
S_2(DN; W = \sigma)^{\Dnew, v} = \{ f \in S_2(DN; W = \sigma)^{\Dnew} : v(f) = f \}
$$
is the subspace of modular forms that are fixed under $v$.
In order to compute this dimension, we can again apply a trace formula, only this time for the operator $v$, following \cite{PopaI}. Using the definitions, this is rather slow and can be improved considerably in a similar way to the ideas of Oesterl{\'e} for the Atkin--Lehner operators (see \cite[\S 4]{PopaII}). However, the spaces of modular forms for which we test this are small enough for a direct computation using modular symbols. For the reader interested in the trace formulas, see Appendix \ref{appendix}.

\begin{example}
    The curve $X = \StarCurve{10}{261}$ has a modular automorphism given by $V_3$ since $9 \parallel 261$. Since $g(X) = 4$ and $g(X/ \langle V_3 \rangle) = 1$, we deduce that $V_3$ has $6$ fixed points on $X$, hence by Proposition~\ref{prop:fixedpointsAL}, $X$ is not hyperelliptic.
\end{example}

\begin{example}
    The curve $X = \StarCurve{15}{88}$ has a modular automorphism given by $V_2$ since $8 \mid 88$. Since $g(X / \langle V_2 \rangle) = 0$, $X$ is hyperelliptic with $V_2$ being the hyperelliptic involution. 
\end{example}

Using this test we find $63$ hyperelliptic curves and prove that $54$ are not hyperelliptic.

\section{Covers}
\label{sec:covers}
The covering structure of the Shimura curves $\Curve{D}{N}/W$ plays an important role in determining and propagating the subhyperellipticity.
\subsection{Atkin--Lehner involutions as hyperelliptic involutions}
\label{sec:ALashyp}
The quotient $\Curve{D}{N}/W$ is hyperelliptic when one of its Atkin--Lehner involutions $w \in \ALfull{D}{N} / W$ acts like the hyperelliptic involution. This happens if and only if the quotient by $w$ is genus 0.
\begin{example}
\label{ex:coveringhypAL}
The curve $\Curve{6}{29}$ is genus 5. This is hyperelliptic, with hyperelliptic involution $w_{174}$, because the genus of the quotient curve is 0.
\end{example}

\subsection{Closure}
A basic, but important, example of leverage the covering structure is the closure principles, explained in Proposition \ref{def:closure}.

\begin{example}
The genus 5 curve $\Curve{6}{29}$ was shown to be hyperelliptic in Example \ref{ex:coveringhypAL}. As a result, applying downward closure to $\Curve{6}{29} \to \Curve{6}{29}/\langle w_3 \rangle $, we can conclude that this genus 3 curve is also hyperelliptic. 

The genus 3 curve $\Curve{6}{133}/\langle w_2, w_{5}, w_{57} \rangle$ is non-hyperelliptic because it has too many points over $\F_5$, violating the equation \eqref{eq:small point count}. 
Using upward closure, we conclude that the genus 5 curve $\Curve{6}{133}/\langle w_2 , w_{399} \rangle$ is not hyperelliptic.
\end{example}

\begin{remark}
In addition to the covering structure of the quotients $\Curve{D}{N}/W$ for $W \subset \ALfull{D}{N}$ a helpful perspective is to think of extending the full set of quotients to include all $\Curve{D}{N}/G$ for $G$ containing  Atkin--Lehner or modular non-Atkin--Lehner involutions as in Section \ref{sec:modularnonAL}. 
This perspective allows one to  
apply techniques that are only leveraging the automorphism group or covering structure to this larger group.
\end{remark}

\subsection{Genus 3 covering genus 2}
\label{sec:g3coverg2}

\begin{proposition} \label{prop: genus 3 covering genus 2}
Let $k$ be a field of characteristic not 2. Let $C$ and $D$ be nice curves of genus 3 and 2, respectively, over $k$. Suppose there is a dominant map $\pi: C \to D$ over $k$. Then $C$ is hyperelliptic.
\end{proposition}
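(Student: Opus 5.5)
Since hyperellipticity is a geometric property and both curves stay nice after base change, we may replace $k$ by $\bar k$. Riemann--Hurwitz applied to $\pi$ of degree $n$ gives
\[ 2\cdot 3-2 = n(2\cdot 2-2) + \deg R, \]
that is, $4 = 2n + \deg R$ with $\deg R\ge 0$. Hence $n\le 2$. Since $C$ has genus $3 \neq 2$, $n=1$ is impossible, so $n=2$ and $\deg R = 0$. Thus $\pi$ is an unramified double cover. In characteristic not $2$ it is Galois, with a fixed-point-free deck involution $\sigma$.

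Next we push the hyperelliptic structure of $D$ up to $C$. Let $h\colon D\to\P^1$ be the canonical degree $2$ map of the genus $2$ curve $D$, which is canonical as a $g^1_2$. Then $h\circ\pi\colon C\to\P^1$ has degree $4$, which is not enough directly. Instead we compare canonical systems. Since $\pi$ is étale, $K_C = \pi^*K_D$. Moreover
\[ \pi_*\mathcal{O}_C = \mathcal{O}_D\oplus L, \]
where $L$ is a nontrivial $2$-torsion line bundle. The projection formula then gives
\[ H^0(C,K_C) = H^0(D,K_D)\oplus H^0(D,K_D\otimes L). \]
The first summand has dimension $2$, and the second has dimension $g(C)-g(D)=1$ by Riemann--Roch, since $\deg(K_D\otimes L)=2$ and $h^1 = h^0(L^{-1}) = 0$. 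So the $\sigma$-invariant differentials form the $2$-dimensional subspace pulled back from $D$, and they span the composite $C\to D\to\P^1$ of degree $4$.

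The key step is to show that the canonical map $\phi\colon C\to\P^2$ is not an embedding. Suppose instead that $C$ is a smooth plane quartic. Choose coordinates $x,y$ from $\pi^*H^0(K_D)$ and $z$ spanning the anti-invariant part. Then $\sigma$ acts on $\P^2$ by $z\mapsto -z$, so the quartic equation takes the form
\[ F(x,y,z) = z^4 + z^2 q(x,y) + r(x,y), \]
since only even powers of $z$ can occur, up to the sign of $F$. In that case $\sigma$ fixes the points of $C$ on the line $z=0$, namely the $4$ roots of $r$, counted with multiplicity. Because $C$ is a plane quartic, this line meets $C$, so $\sigma$ has fixed points. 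This contradicts the fact that $\pi$ is étale. Hence $C$ is not a plane quartic, so $C$ is hyperelliptic, as every non-hyperelliptic genus $3$ curve is canonically a smooth plane quartic.

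The main obstacle is justifying the action of $\sigma$ on the canonical model, including the parity of the equation, with care. An alternative route is to count: a fixed-point-free involution on a genus $3$ curve has a genus $2$ quotient. One can then use the fact that $\sigma$ extends to a linear involution of $\P^2$, which is a homology with a fixed line and an isolated fixed point. The fixed line must meet the quartic, and this forces fixed points. Either way the argument is short. The only other point requiring care is that the degree bound forces $\pi$ to be étale, which relies on $\operatorname{char} k\ne 2$ so that the cover is tame.
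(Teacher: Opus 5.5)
Your proof is correct, but it takes a genuinely different route from the paper's.

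\textbf{The paper's route.} After the same Riemann--Hurwitz reduction to an \'etale double cover, the paper's first proof writes $D$ as $y^2=f(x)$ with $\deg f=6$. It identifies the $15$ connected \'etale double covers of $D$ with factorizations $f=gh$ into a quadratic and a quartic. It then observes that $k(C)=k(x)(\sqrt{g},\sqrt{h})$ is a quadratic extension of the function field of the conic $w^2=g(x)$, which is rational. This is constructive: it exhibits the degree-$2$ map $C\to\P^1$ explicitly. The price is that it needs the classification of \'etale double covers of a genus $2$ curve.

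\textbf{Your route.} You argue by contradiction through the canonical model. A non-hyperelliptic $C$ is a smooth plane quartic. The deck involution $\sigma$ acts linearly on $\P^2$, and the line it fixes pointwise meets $C$. This gives fixed points of $\sigma$, contradicting \'etaleness. The argument proves the stronger fact that no smooth plane quartic has a fixed-point-free involution, and it uses nothing about covers of genus $2$ curves. It does not, however, identify the hyperelliptic involution.

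\textbf{The step you flagged is unnecessary.} The canonical map is an embedding and $\sigma\neq\mathrm{id}$, so $\sigma$ induces a nontrivial projective involution of $\P^2$. In characteristic $\ne 2$ this is $\mathrm{diag}(1,1,-1)$ in suitable coordinates, so its fixed locus contains a line whatever the eigenspace dimensions are. Hence neither the decomposition $H^0(K_D)\oplus H^0(K_D\otimes L)$ nor the parity analysis of $F$ is needed. In that analysis you also normalized the $z^4$-coefficient to $1$ without justification, which the line argument makes irrelevant. This diagonalization is where the characteristic hypothesis is really used. It is not needed for tameness: for a separable map, wild ramification only increases $\deg R$.

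\textbf{A gap shared with the paper.} The formula $2g_C-2=n(2g_D-2)+\deg R$ requires $\pi$ to be separable. In characteristic $p>2$, relative Frobenius $C\to C^{(p)}$ followed by an \'etale double cover $C^{(p)}\to D$ is a dominant map of degree $2p$, so $n=2$ need not hold. The fix is to factor $\pi$ through a power of relative Frobenius, which preserves genus and hyperellipticity, and run your argument on the separable part.
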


This is mentioned in \cite[Proposition 5]{HasegawaStarcurves} without proof. For the interested reader, we provide two distinct proofs.
\begin{proof}
    We can assume that $k = \bar{k}$. Then $D$ is hyperelliptic and birationally equivalent to a curve of the form $y^2 = f(x)$, where $f$ is a degree 6 separable polynomial in $k[x]$.
    Applying Riemann--Hurwitz, we can compute that $\pi$ is a degree 2 \'etale map.
     We claim that degree 2 connected finite \'etale  covers arise as factorizations $f = gh$ into nonconstant even-degree polynomials. Note that by Abhyankar's lemma any such factorization defines a degree 2 connected finite \'etale cover of D with function field
     $k(x)(\sqrt{f},\sqrt{g}) = k(x)(\sqrt{g},\sqrt{h})$, and furthermore there are $2^4 -1 = \binom{6}{2}$ of these.
     So the function field $k(C)$ has this form, say with $\deg g = 2$ and $\deg h = 4$.  Then $k(C)$ is a degree 2 extension of $k(x)(\sqrt{g})$, which is the function field of a conic, hence a rational function field.  Therefore $C$ is hyperelliptic.
\end{proof}

\begin{proof}
    Suppose $ \pi: C \to D$ is unramified. Since $D$ is hyperelliptic, let $\iota \in \Aut(D)$ be the hyperelliptic involution. Then since $\pi$ is a map of covering spaces of the  $C$ and $D$, we have that $\iota$ lifts to an involution of $C$ (and an involution of the Riemann surface of $C$). By GAGA, this yields an algebraic involution $\tilde{\iota}$ on the algebraic curve $C$. Therefore $C$ is hyperelliptic.
\end{proof}

\begin{example}
The curve $\Curve{6}{11}$ is genus 3 and covers the genus 2 curve $\Curve{6}{11}/\langle w_{11} \rangle$. Therefore $\Curve{6}{11}$ is hyperelliptic.
\end{example}

\subsection{Degeneracy morphisms}\label{sec:degeneracy}
We can also use the covering structure of the curves in another way: specifically, we know that some of the covers come from degeneracy morphisms on the level structure.

Consider the curve $\Curve{D}{N}/W$, and recall that $\Curve{D}{N}$ is the moduli space of pairs $(A, \iota)$ where $\iota : \Order_0(N) \to \End(A)$ equips $A$ with multiplication by $\Order_0(N)$, the Eichler order of level $N$ inside $B = B_D$, the quaternion algebra of discriminant $D$. 

Let $M \mid N$ and $q \mid N / M$.
Let  $i^{(q)}_{N,M}:\Curve{D}{N} \to \Curve{D}{M}$  denote the degeneracy morphism
\begin{align*}
	i_{N,M}^{(q)}:\Curve{D}{N} &\to \Curve{D}{M} \\
    (A,\iota) &\mapsto (A, \iota \circ [q]),
	\end{align*}
where $[q] : \Order_0(M) \to \Order_0(N)$ is the conjugation map by an element $\alpha_q \in B$ which is trivial at primes dividing $D$, and $\diag(1,q)$ away from them.
    \begin{proposition}
Let $w_m \in \ALfull{D}{N}$. Assume that $q^2 \mid N$ and $M = N/q^2$. For any Atkin--Lehner involution $w_m$, the following square commutes.
	\[
	\xymatrix{\Curve{D}{N} \ar[r]^-{w_m^{(N)}} \ar[d]_{i_{N,M}^{(q)}} & \Curve{D}{N} \ar[d]^{i_{N,M}^{(q)}} \\
		\Curve{D}{M} \ar[r]^-{w_m^{(M)}} & \Curve{D}{M} }
	\]
        \end{proposition}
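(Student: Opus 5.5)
We prove the statement by translating everything into the upper half-plane (or, equivalently, into the adelic double-coset description) and checking that the two composites agree as maps of curves. First fix a local model: for every $p \nmid D$ we identify $\Order_p$ with the standard Eichler order $\mattwo{\Z_p}{\Z_p}{N\Z_p}{\Z_p}$ inside $M_2(\Q_p)$. In this model the degeneracy map $i^{(q)}_{N,M}$ is induced on the complex uniformization $\Gamma_0(D,N)\backslash\mathfrak{h} \to \Gamma_0(D,M)\backslash\mathfrak{h}$ by $z \mapsto \alpha_q z$. Here $\alpha_q$ is the global element of $B$ (chosen by strong approximation) that is a unit at $p \mid D$ and equal to $\diag(1,q)$ (up to $\Order_p^\times$) at $p \nmid D$. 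This map is well defined because $\alpha_q \Order_0(N) \alpha_q^{-1} \subseteq \Order_0(M)$, which is exactly the statement that $[q]$ lands in the right order.

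Next, for each $m \| DN$ we choose a representative $\gamma_m^{(N)} \in N_{B^\times}(\Order_0(N))$ of $w_m^{(N)}$, specified prime by prime. At $p \mid D$ we let it be a uniformizer of the maximal order if $p \mid m$ and a unit otherwise. At $p \mid N$ with $p^{a}\|N$ and $p\mid m$ we take the standard matrix $\mattwo{0}{1}{p^{a}}{0}$. At all other primes we take a unit. We choose $\gamma_m^{(M)}$ in the same way. We then treat the primes separately:
\begin{itemize}
\item For $p \nmid q$ the element $\alpha_q$ is locally a unit, so the local component commutes trivially.
\item For $p \mid q$ with $p\mid m$, write $p^a \| N$, so that $p^{a-2b}\|M$ where $p^b \| q$. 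The key computation is the $2\times 2$ matrix identity
\[
\diag(1,p^b)\,\mattwo{0}{1}{p^{a}}{0}\,\diag(1,p^b)^{-1} = \mattwo{0}{p^{-b}}{p^{a+b}}{0} = p^{b}\mattwo{0}{p^{-2b}}{p^{a}}{0}.
\]
Modulo scalars, the right-hand side lies in $\mattwo{0}{1}{p^{a-2b}}{0}\cdot\Order_0(M)_p^\times$. Indeed,
\[
\mattwo{0}{p^{-2b}}{p^a}{0} = p^{-2b}\mattwo{0}{1}{p^{a+2b}}{0},
\]
and a diagonal unit adjustment relates the two. This is precisely where the hypothesis $M = N/q^2$ is used: conjugating by $\diag(1,q)$ is symmetric about the "middle" of the level only when exactly $q^2$ is removed.
\end{itemize}

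We assemble the local statements with strong approximation. They show that $\alpha_q\gamma_m^{(N)}\alpha_q^{-1}$ and $\gamma_m^{(M)}$ differ by an element of $\Q^\times\Order_0(M)^\times$, with global norms matching up to squares. Hence they act identically on $\Gamma_0(D,M)\backslash\mathfrak{h}$. Equivalently, $i^{(q)}\circ w_m^{(N)} = w_m^{(M)}\circ i^{(q)}$ on $\C$-points, and therefore as morphisms over $\Q$, since both sides are defined over $\Q$ and $\Q$-morphisms of curves agreeing on $\C$-points coincide. If we prefer the moduli description, the same computation shows that the isogeny realizing $w_m$ is compatible with the isogeny underlying $[q]$.

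We expect the main obstacle to be the prime-by-prime bookkeeping at $p \mid q$. Two things have to be checked there: that the conjugated element genuinely normalizes $\Order_0(M)_p$, and that it represents $w_m^{(M)}$ rather than some other class such as $w_m^{(M)}$ composed with an extra $w_{p^{a-2b}}$-type twist. We would verify this first in the matrix model above, and separately treat $p \mid q$ with $p \nmid m$. In that case $\alpha_q$ conjugates a unit to an element that must still be checked to lie in $\Order_0(M)_p^\times$, which holds because $b \le a-b$.
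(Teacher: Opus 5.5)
There is a genuine error in the one computation that carries the content of the statement. Your overall strategy is the natural adaptation of the classical argument the paper cites from \cite[Proposition 3.16]{HKLF}: a prime-by-prime identity for the standard local Atkin--Lehner element, followed by gluing. However, you conjugate in the wrong direction, and the coset membership you then assert is false. Up to scalars your conjugate is $\mattwo{0}{1}{p^{a+2b}}{0}$, and
\[
\mattwo{0}{1}{p^{a-2b}}{0}^{-1}\mattwo{0}{1}{p^{a+2b}}{0}=\diag(p^{4b},1),
\]
which for $b\ge 1$ is neither a unit nor a scalar times a unit. So no ``diagonal unit adjustment'' exists. In fact $\mattwo{0}{1}{p^{a+2b}}{0}$ normalizes the Eichler order of level $p^{a+2b}$, not $\Order_0(M)_p$, so it fails the first of the two checks you flagged as the main obstacle.

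The same error is already present upstream. For $\alpha_q=\diag(1,q)$ one has $\alpha_q\mattwo{x}{y}{Nz}{w}\alpha_q^{-1}=\mattwo{x}{y/q}{qNz}{w}$, which is not integral. Hence $\alpha_q\Order_0(N)\alpha_q^{-1}\not\subseteq\Order_0(M)$, and $z\mapsto\alpha_q z$ does not descend to a map to level $M$. For the same reason your case $p\mid q$, $p\nmid m$ fails as written: a local unit $\mattwo{x}{y}{p^a z}{w}$ is sent to $\mattwo{x}{p^{-b}y}{p^{a+b}z}{w}$.

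The repair is to invert $\alpha_q$. The correct containment is $\alpha_q^{-1}\Order_0(N)\alpha_q\subseteq\Order_0(N/q)\subseteq\Order_0(M)$. So the degeneracy map is $z\mapsto\alpha_q^{-1}z$ (classically $z\mapsto qz$), and every local conjugation should be by $\diag(p^b,1)$, up to scalars. Then
\[
\diag(p^b,1)\mattwo{0}{1}{p^a}{0}\diag(p^b,1)^{-1}=\mattwo{0}{p^b}{p^{a-b}}{0}=p^b\mattwo{0}{1}{p^{a-2b}}{0}
\]
holds exactly, with no adjustment needed. This is where $M=N/q^2$ enters: the $p$-exponent of the level drops by exactly $2b$. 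When $a=2b$ the result is a scalar times a unit, matching the trivial $p$-component of $w_m^{(M)}$.

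For $p\mid q$ with $p\nmid m$, a unit is sent to $\mattwo{x}{p^by}{p^{a-b}z}{w}$, which lies in $\Order_0(p^{a-b})_p^\times\subseteq\Order_0(M)_p^\times$.

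With this correction your gluing step goes through, and it does not need strong approximation; that is only used to choose a global $\alpha_q$. Since $\Q$ has class number one, the local scalars can be replaced by a single rational number. An element of $B$ lying in $\Order_0(M)_p^\times$ for every $p$ is then a global unit, and its reduced norm is $+1$ by positivity.
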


\begin{proof}
    This is the same argument as in \cite[Proposition 3.16]{HKLF}.
\end{proof}

\begin{corollary}
\label{ref:cordegeneracy}
Suppose $q^2 \mid N$ and $(N/q^2, q) = 1$. Let $W' \colonequals \{w_m : w_m \in W \mid (m,q) = 1 \}$.
    There is a morphism
    \[ \Curve{D}{N}/W \to \Curve{D}{N/q^2}/W'\]
    of degree $[\SL_2(\Z) : \Gamma_0(q^2)]/2$ if $w_{q^2} \in W$ and $[\SL_2(\Z) : \Gamma_0(q^2)]$ otherwise.
\end{corollary}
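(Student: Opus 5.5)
The plan is to descend the degeneracy morphism $i := i^{(q)}_{N,M}$ with $M = N/q^2$ through the two quotient maps using the preceding Proposition, and then compute the degree by comparing degrees around a commutative square.

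\textbf{Step 1: the map descends.} First match Atkin--Lehner elements at the two levels. Since $(M,q)=1$, every Hall divisor $m$ of $DN$ with $(m,q)=1$ is a Hall divisor of $DM$. Hence each $w_m \in W'$ has a counterpart $w_m^{(M)} \in \ALfull{D}{M}$. Those $w_m \in W$ with $q \mid m$ (more precisely $q^2 \mid m$, as $m$ is a Hall divisor) act on level $M$ through $w_{m/q^2}$. We only need the elements of $W'$ for the target, so consider the composite
\[ \Curve{D}{N} \xrightarrow{i} \Curve{D}{M} \to \Curve{D}{M}/W'. \]
By the commuting square in the preceding Proposition, $i \circ w_m^{(N)} = w_m^{(M)} \circ i$. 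So for $w_m \in W'$ the composite is invariant under $w_m^{(N)}$.

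\textbf{Step 2: where the obstacle lies.} The composite is not obviously invariant under all of $W$: for $w_m \in W \setminus W'$, the element $w_m^{(N)}$ is sent by $i$ to $w_{m/q^2}^{(M)}$, which need not lie in $W'$. This is the point I expect to be delicate. The statement should be read as follows: $W$ is split into $W'$ and a part involving $w_{q^2}$.

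I would handle this by observing that $W \cap \langle w_d : (d,q)=1\rangle = W'$. If $W$ is generated by $W'$ together with $w_{q^2}$, then $W = W' \times \langle w_{q^2}\rangle$. In that case I replace $i$ by the symmetrized map. The correspondence $i$ together with $i\circ w_{q^2}$ descends to a map $\Curve{D}{N}/\langle w_{q^2}\rangle \to \Curve{D}{M}$. Concretely, $i \circ w_{q^2}^{(N)} = i^{(1)}_{N,M}$ up to the identification at level $M$, which is the classical fact for $\Gamma_0$.

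In the remaining case the elements $w_m$ with $q\mid m$ have the form $w_{q^2}w_{m'}$ with $w_{m'} \notin W'$. I would need to argue either that this cannot occur without $w_{q^2}\in W$, or restrict to $W$ generated by $W'$ and possibly $w_{q^2}$, which is the case the Corollary implicitly treats.

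\textbf{Step 3: degree count.} Compute the degree by comparing degrees around the commutative square
\[ \Curve{D}{N} \to \Curve{D}{N}/W \to \Curve{D}{M}/W' \quad\text{versus}\quad \Curve{D}{N}\xrightarrow{i} \Curve{D}{M}\to \Curve{D}{M}/W'. \]
This gives
\[ |W|\cdot \deg = \deg(i)\cdot |W'|. \]
For $\deg i$, I use that locally at primes dividing $q$ (all coprime to $D$) the Eichler order of level $N$ has index $[\SL_2(\Z):\Gamma_0(q^2)]$ in the conjugated order of level $M$, as in the $\Gamma_0$ case. Modding out by $\pm1$ does not change this, since $-1$ lies in both groups. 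Finally, $|W|/|W'|$ is $2$ if $w_{q^2}\in W$ and $1$ otherwise, which yields the stated degrees.
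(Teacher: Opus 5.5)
\paragraph{The gap: invariance under $w_{q^2}$ in Step 2.} This is the one place where existence of the map actually needs an argument, and the identity you invoke there is wrong.

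You claim $i\circ w_{q^2}^{(N)} = i^{(1)}_{N,M}$. This is false for the middle degeneracy map $i = i^{(q)}_{N,M}$. Classically, $\pi_1\circ w_{q^2} = \pi_{q^2}$ relates the two \emph{outer} degeneracy maps $\tau\mapsto\tau$ and $\tau\mapsto q^2\tau$. The middle map instead satisfies $\pi_q\circ w_{q^2}=\pi_q$. Taken at face value, your identity would say that $i$ is \emph{not} $w_{q^2}$-invariant, and then no morphism of degree $[\SL_2(\Z):\Gamma_0(q^2)]/2$ can be obtained from it. ``Symmetrizing'' the pair $\{i,\, i\circ w_{q^2}\}$ gives a correspondence, or a map to the symmetric square of $\Curve{D}{M}$. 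It does not give a morphism $\Curve{D}{N}/\langle w_{q^2}\rangle\to\Curve{D}{M}$.

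What you need is simply $i\circ w_{q^2}^{(N)} = i$. This is the preceding Proposition with $m=q^2$, because $w_{q^2}$ acts trivially at level $M$, as $(q,DM)=1$. Your own rule from Step 1, that $w_m$ acts at level $M$ through $w_{m/q^2}$, already gives this with $m/q^2=1$. You can also check it directly in the classical model $\tau\mapsto q\tau$. If $\gamma=\mattwo{q^2a}{b}{Nc}{q^2d}$ has determinant $q^2$, then $\mattwo{q}{0}{0}{1}\gamma\mattwo{q}{0}{0}{1}^{-1} = q\mattwo{qa}{b}{Mc}{qd}$, and $\mattwo{qa}{b}{Mc}{qd}\in\Gamma_0(M)$ since $q^2ad-Mbc=1$. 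Locally at primes dividing $q$, which do not divide $D$, the quaternionic case is identical.

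\paragraph{The rest of the proposal.} Your Step 3 is the same degree count as the paper's proof, which reads off $\#W\cdot d' = \#W'\cdot[\SL_2(\Z):\Gamma_0(q^2)]$ from the same commutative square.

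Your concern about the ``remaining case'' is legitimate and is not a weakness of your approach. The paper's proof only does the degree bookkeeping and tacitly assumes $W=W'$ or $W=W'\times\langle w_{q^2}\rangle$. The other case does occur. Take $W=\langle w_{q^2m'}\rangle$ with $m'\neq 1$ a Hall divisor of $DM$. Then $W'$ is trivial and $i\circ w_{q^2m'}=w_{m'}\circ i\neq i$, so $i$ does not factor through $\Curve{D}{N}/W$. Moreover $\#W/\#W'=2$ even though $w_{q^2}\notin W$, which contradicts the stated degree.

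So do not try to prove the restriction away; state it as a hypothesis. It holds in the paper's application, where $q=2$ and $W=W'\times\langle w_4\rangle$. With that hypothesis added and Step 2 repaired as above, your argument is complete.
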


\begin{proof}
    We just need to show the degree calculation -- this follows from considering the commutative square
    	\[
	\xymatrix{\Curve{D}{N} \ar[r]^-{d} \ar[d]_{2^{\#W}} & \Curve{D}{N/q^2} \ar[d]^{2^{\#W'}} \\
		\Curve{D}{N}/W \ar[r]^-{d'} & \Curve{D}{N/q^2}/W' }
	\]
    where the labels denote the degrees of the morphisms.
    Then 
    $$
    d = [\Order_0(N/q^2) : \Order_0(N)] = [\Gamma_0(N/q^2) : \Gamma _0(N)] = [\SL_2(\Z) : \Gamma_0(q^2)],$$
    and we can solve for $d'$ accordingly.
\end{proof}

Concretely, we apply the corollary in the following way. We want to construct a map $X_0(N)/W \to \P^1$ of degree $d' = 3$. If $X_0(N)/W$ is genus at least 3, the Castelnuovo--Severi inequality  \cite[Proposition 2.1]{Poonen} implies that a curve cannot be both trigonal and hyperelliptic. We conclude $X_0(N)/W$ is not hyperelliptic.
To achieve this, we need $d' = 3$ and $X_0(N/q^2)/W'$ to have genus 0.
In particular, when $q^2 = 4$, then $[\SL_2(\Z):\Gamma_0(q^2)] = 6$, and this yields $d' =3$ in the case where $w_{4} \in W$. 
\begin{example}
 The morphism $\Curve{15}{28}/\langle w_3, w_4, w_{35} \rangle  \to \Curve{15}{7}/\langle w_3, w_{35} \rangle$ is degree 3, and $  \Curve{15}{7}/\langle w_3, w_{35} \rangle$ is genus 0. This proves that $\Curve{15}{28}/ \langle w_3, w_4, w_{35} \rangle $ is not hyperelliptic.
\end{example}

\section{Isomorphisms}
\label{sec:isomorphism}
\subsection{Extra modular automorphisms}

When $4 \parallel N$ or $9 \parallel N$ there are  some isomorphisms between the various quotient curves of $\Curve{D}{N}$, arising from the extra modular automorphisms $V_2$ and $V_3$ defined in Section~\ref{sec:modularnonAL}.

\begin{proposition}
    Let $N$ be such that either $4 \mid N$ or $9 \parallel N$, and let $\sigma \in \{ S_2, V_2, V_3 \}$ be such that $\sigma \Gamma_0(D,N) \sigma^{-1} = \Gamma_0(D,N)$, and $p = 2$ or $p = 3$ accordingly. Then, for any $W \le \ALfull{D}{N}$ we have
    $$
    \Curve{D}{N} / W \simeq \Curve{D}{N} / \langle \{ w_{p^v}^{\varepsilon(m)} w_m \}_{p \nmid m}, \sigma \rangle,
    $$
    where $v = v_p(N)$ and $\varepsilon(m) \in \{0,1\}$ with $\varepsilon(m) = 0$ if and only if either $p = 2$ or $m \equiv 1 \bmod 3$ or $9 \parallel m$ and $m/9 \equiv 1 \bmod 3$.
\end{proposition}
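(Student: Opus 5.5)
The proof will rest on one basic observation. If $\tau$ lies in the normalizer of $\Gamma_0(D,N)$ in $\PSL_2(\R)$ and $G$ is a finite group of automorphisms of $\Curve{D}{N}$ induced by elements of that normalizer, then $\tau$ induces an isomorphism $\Curve{D}{N}/G \xrightarrow{\sim} \Curve{D}{N}/\tau G \tau^{-1}$. To prove the proposition, it therefore suffices to find a suitable conjugating element $\tau$, built from $S_p$, such that
$$\tau \langle W \rangle \tau^{-1} = \langle \{ w_{p^v}^{\varepsilon(m)} w_m \}_{p \nmid m}, \sigma \rangle .$$
Following \cite[Section 2]{FH99} and the last example of Section~\ref{sec:genusfixedpoints} (where conjugation by $S_3$ was used), I would take $\tau = S_p$ in the case $\sigma \in \{S_2, V_3\}$, and $\tau = S_2 W_{2^v}$ or an analogous word in the case $\sigma = V_2$.

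The key steps are as follows.

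\begin{enumerate}
\item I first note that $W$ is generated by its elements $w_m$ with $p \nmid m$ together with at most one element whose index is divisible by $p^v$. Since $W$ is elementary abelian, I may choose generators of the form $w_m$ ($p \nmid m$) and, if present, $w_{p^v}$ or $w_{p^v m_0}$.
\item I compute how $\tau$ acts on each generator, using the commutation relations of \cite[Lemma 1]{FH99}, which carry over verbatim to $D>1$. The reason is that $S_p$ is a local modification at $p \nmid D$, so the quaternionic structure at primes dividing $D$ plays no role, and the whole computation can be done in $\Order_p \simeq M_2(\Z_p)$-type local coordinates.
\begin{enumerate}
\item For $p \nmid m$, the element $w_m$ is locally at $p$ a scalar-like matrix $\diag(1,1)$ modulo $\Gamma_0$ twisted by $m$. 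Conjugating by $S_p$ yields $w_m$ itself when $m$ is a square mod $p$-power, and $w_{p^v} w_m$-type corrections otherwise. This produces the factor $w_{p^v}^{\varepsilon(m)}$.
\item For $p=2$, every odd $m$ gives $\varepsilon=0$. For $p=3$, we get $\varepsilon(m)=0$ exactly when $m\equiv 1 \bmod 3$, or when $9 \parallel m$ with $m/9 \equiv 1 \bmod 3$.
\end{enumerate}
\item I verify that $\tau w_{p^v} \tau^{-1}$, or the conjugate of the generator of $W$ involving $p^v$, is precisely $\sigma$. This holds by the defining formulas $V_2 = S_2 W_{2^v} S_2$, $V_3 = S_3 W_9 S_3^2$, and, for $4\parallel N$, $S_2 = S_2 W_4 S_2^{-1}$ up to $\Gamma_0(D,N)$.
\item I conclude that $\tau W \tau^{-1}$ equals the displayed group, after adjusting the generator $w_{p^v m_0}$ by the already-handled $w_{m_0}$ factors.
\end{enumerate}

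The main obstacle is step 2. One has to check, in the quaternionic setting, that the matrix identities of \cite[Lemma 1]{FH99} still hold modulo $\Gamma_0(D,N)$. The plan is to work adelically. I would choose the element $\gamma_m \in N_{B^\times}(\Order)$ of reduced norm $m$ so that its $p$-component is $\mattwo{a}{b}{N c}{m d}$-shaped, using strong approximation, and then conjugate only the $p$-component by $S_p$. The result differs from $\gamma_m$ by an element of $\Order_p^\times$ exactly when $m$ is a square modulo the relevant power of $p$. Otherwise, it differs by the local Atkin--Lehner element at $p$. The residue computation modulo $3$ (respectively modulo $9$ when $9 \parallel m$) is what dictates the case distinction in $\varepsilon(m)$.

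A secondary point is to make sure that $\sigma$ normalizes $\Gamma_0(D,N)$, which is assumed, and that $\tau$ normalizes it as well. If $\tau$ does not, I would instead use $\tau$ only to conjugate the full group $\langle \Gamma_0(D,N), W \rangle$, which suffices for the isomorphism of quotients.
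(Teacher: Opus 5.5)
Your overall strategy, realizing the isomorphism by conjugating $W$ by an element normalizing $\Gamma_0(D,N)$, is the paper's. For $\sigma=V_3$ you use the same conjugator $S_3$, and for $\sigma=V_2$ your $S_2w_{2^v}$ acts on $W$ exactly as the paper's $S_2$ does, since $w_{2^v}$ centralizes $W$. The case $4\parallel N$, however, is set up wrongly. Modulo $\Gamma_0(D,N)$ one has $S_2^2\equiv 1$ and $(S_2w_4)^3\equiv 1$. Locally at $2$ this is the computation in $\PSL_2(\Z)/\Gamma(2)$: after conjugating by $\diag(2,1)$, $S_2$ and $w_4$ become $\mattwo{1}{1}{0}{1}$ and $\mattwo{0}{-1}{1}{0}$. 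So $S_2$ and $w_4$ generate a dihedral group of order $6$ whose involutions are $w_4$, $S_2$ and $S_2w_4S_2$. Conjugation by $S_2$ therefore sends $w_4$ to the third involution $S_2w_4S_2$, not to $S_2$. Your identity ``$S_2=S_2W_4S_2^{-1}$'' is equivalent to $w_4\equiv S_2$, which is false. This is why the paper conjugates by $S_2w_4S_2$ instead. The relation $(S_2w_4)^3\equiv 1$ gives $(S_2w_4S_2)\,w_4\,(S_2w_4S_2)^{-1}\equiv S_2$, and both $S_2$ and $w_4$ commute with $w_m$ for odd $m$.

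Your step~2 also fails for $p=3$, and here no change of conjugator helps. For $p\nmid m$, the $p$-component of $w_m$ lies in $\Order_p^1\diag(1,m)$. Conjugation by $\diag(1,m)$ turns translation by $1/p$ into translation by $1/(pm)$, so $w_mS_pw_m^{-1}\equiv S_p^{k}$ with $km\equiv 1 \bmod p$. Hence $S_p$ commutes with $w_m$ exactly when $m$ is a square modulo $p$, not modulo a power of $p$; otherwise your 2(a) would contradict 2(b) for $m\equiv 3\bmod 4$. For $m\equiv 2\bmod 3$ this gives $S_3w_mS_3^{-1}\equiv S_3^{2}w_m$. The discrepancy is therefore the order-$3$ element $S_3^2$, not the local Atkin--Lehner element $w_9$, and your adelic sketch's conclusion is false. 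Worse, combining this with $(S_3w_9)^3\equiv 1$ gives $V_3w_mV_3^{-1}\equiv w_9w_m$ for such $m$; a direct matrix check for $N=18$, $m=2$ gives $V_3w_2V_3^{-1}\equiv w_{18}$. So $V_3$ does not commute with $w_9^{\varepsilon(m)}w_m$, and the displayed group is non-abelian. It is not a conjugate of the elementary abelian group $W$ by any element. Consequently the conjugation argument establishes the statement for $p=3$ only when every $w_m\in W$ with $3\nmid m$ has $m\equiv 1\bmod 3$, i.e.\ when $\varepsilon\equiv 0$; this applies equally to yours and to the paper's one-line ``conjugate by $S_3$''. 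The genuine $\varepsilon$-twist is the content of Proposition~\ref{prop:iso9}. It comes from conjugating by $V_3$, which centralizes $w_9$ and sends $w_M\mapsto w_9^{\varepsilon(M)}w_M$. Finally, the statement tacitly requires $w_{p^v}\in W$, since otherwise $\sigma$ on the right has no preimage. Likewise, your step~4 adjustment of $w_{p^vm_0}$ presupposes $w_{m_0}\in W$.
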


\begin{proof}
    The isomorphism is obtained by conjugation by $S_2 w_4 S_2, S_2$ or $S_3$, respectively.
\end{proof}

The above proposition is used to prove two propositions from \cite[Section 3]{FH99} which are readily adapted to handle the case of Shimura curve quotients.

\begin{proposition}
\label{prop:iso4}
    Let $N \geq 1$ be an integer such that $4 || N$ and write $N = 4N'$. Let $W = \{ w_4,  w_{m_1}, \dots, w_{m_r}\}$ be a subgroup of the Atkin--Lehner group of $\Curve{D}{N}$ containing $w_4$. We have the isomorphism
    \[\Curve{D}{4N'}/W 
\simeq \Curve{D}{2N'}/ \langle w_{m_1}, \dots, w_{m_r}\rangle. \]
\end{proposition}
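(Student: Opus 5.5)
The plan follows the modular-curve argument of \cite[Section 3]{FH99}, now carried out inside $\PSL_2(\R)$ via $\iota_\infty$. Put $\Gamma = \Gamma_0(D,4N')$ and $\Gamma' = \Gamma_0(D,2N')$. The first step is to use the preceding proposition with $p=2$ and $\sigma = S_2$, allowed since $4 \parallel N$. Because $\varepsilon(m)=0$ whenever $p=2$, it gives
$$\Curve{D}{4N'}/W \simeq \Curve{D}{4N'}/\langle w_{m_1},\dots,w_{m_r}, S_2\rangle,$$
after adjusting the generators $w_{m_i}$ to have odd $m_i$. This is harmless, since $w_4 \in W$ lets us replace $w_{m_i}$ by $w_4 w_{m_i}$ as needed. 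The effect is to trade $w_4$ for $S_2$, conjugating by $S_2 w_4 S_2$.

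The second step identifies the group generated by $\Gamma$ and $S_2$, and this is where I would put the main effort. Locally at $2$, $\Order_2$ is the standard Eichler order of level $4$, and $S_2$ corresponds (up to scalars) to $\begin{pmatrix} 1 & 1/2 \\ 0 & 1\end{pmatrix}$. I would then compute in $\PSL_2(\Q_2)$ that $\langle \Gamma_0(4), \begin{pmatrix}1&1/2\\0&1\end{pmatrix}\rangle$ is the conjugate $\alpha\,\Gamma_0(2)\cap\Gamma^0(2)\,\alpha^{-1}$ with $\alpha = \diag(2,1)$. Concretely, $\Gamma_0(4)\cap \Gamma^0(1)$ extended by that unipotent gives $\alpha\Gamma_0(2)\cap\Gamma^0(2)\alpha^{-1}$ after rescaling. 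The group $\Gamma_0(2)\cap\Gamma^0(2)$ is itself conjugate, by $\diag(2,1)$ once more, to $\Gamma_0(4)$. So I would have to redo this more carefully.

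The cleaner route is to show that $\langle \Gamma, S_2\rangle$, which contains $\Gamma$ with index $2$, is conjugate in $B^\times\otimes\Q_2$ to an index-$2$ overgroup of $\Gamma_0(4)$. I would check by an index count that this overgroup is in fact the level-$2$ Eichler group: $[\Gamma_0(2):\Gamma_0(4)] = 2$, so it suffices to exhibit one conjugate containing both $\Gamma$ and $S_2$. Since the conjugating element lives only at $2 \nmid D$, strong approximation (Eichler orders of the same level are locally conjugate and the indefinite algebra has class number one) globalizes the conjugation. This yields
$$\Curve{D}{4N'}/\langle S_2\rangle \simeq \Curve{D}{2N'}.$$

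The final step is to check that under this conjugation the odd-index involutions $w_{m_i}$ on the level-$4N'$ side map to the corresponding $w_{m_i}$ on level $2N'$. This holds because the conjugator is trivial away from $2$, and the $w_{m_i}$ with $m_i$ odd are defined by local data at odd primes. Passing to quotients then gives the isomorphism in the statement. I expect the main obstacle to be the explicit local $2$-adic matrix identification in the second step, in particular getting the normalizations of $S_2$ and $w_4$ right, together with the commutation relations of \cite[Lemma 1]{FH99}.
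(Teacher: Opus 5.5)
Your outline follows the paper's route, which simply adapts \cite[Section 3]{FH99}. First, apply the preceding proposition with $\sigma = S_2$ to trade $w_4$ for $S_2$; you correctly note that making the $m_i$ odd is harmless since $w_4 \in W$. Then identify $\langle \Gamma_0(D,4N'), S_2\rangle$ with a conjugate of $\Gamma_0(D,2N')$ by an element that is $\diag(2,1)$ at $2$, and check that the odd $w_{m_i}$ are carried to $w_{m_i}$.

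The gap is that the one step with content beyond the preceding proposition is never established. The identity you write is false. For $\alpha = \diag(2,1)$ one has $\alpha\,(\Gamma_0(2)\cap\Gamma^0(2))\,\alpha^{-1} = \Gamma^0(4)$, which does not even contain $\Gamma_0(4)$: it misses $\begin{pmatrix}1&1\\0&1\end{pmatrix}$. The correct statement is that $\alpha$ conjugates $\Gamma_0(4)$ itself, not its extension by $S_2$, onto $\Gamma_0(2)\cap\Gamma^0(2)$. Your fallback is a valid reduction: exhibit one conjugate of the level-$2N'$ group containing both $\Gamma$ and $S_2$, then get equality by comparing covolumes. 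But you never exhibit that conjugate, and that is exactly what has to be proved.

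The missing computation is short. One has $\alpha\,\Gamma_0(4N')\,\alpha^{-1} = \Gamma_0(2N')\cap\Gamma^0(2)$ and $\alpha\begin{pmatrix}1&1/2\\0&1\end{pmatrix}\alpha^{-1} = \begin{pmatrix}1&1\\0&1\end{pmatrix}$. Hence $\langle \Gamma_0(4N'), S_2\rangle = \alpha^{-1}\,\Gamma_0(2N')\,\alpha$.

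Quaternionically, let $\Order''$ be the order with $\Order''_2 = \alpha^{-1}\begin{pmatrix}\Z_2&\Z_2\\2\Z_2&\Z_2\end{pmatrix}\alpha = \begin{pmatrix}\Z_2&\tfrac12\Z_2\\4\Z_2&\Z_2\end{pmatrix}$ and $\Order''_p = \Order_p$ for $p \neq 2$. Then:
\begin{itemize}
\item $\Order''$ is an Eichler order of level $2N'$, so it is globally conjugate to $\Order_0(2N')$ because $B$ is indefinite.
\item It contains $\Order$ and, locally at $2$, the element $\begin{pmatrix}1&1/2\\0&1\end{pmatrix}$ representing $S_2$.
\item Your index argument therefore gives $(\Order'')^1/\{\pm1\} = \langle \Gamma, S_2\rangle$.
\end{itemize}
Run this argument with norm-one groups rather than orders. $S_2$ normalizes $\Order^1$, because norm-one elements of $\Order_0(4)_2$ have $a \equiv d \bmod 2$, but it does not normalize the order $\Order$ itself.

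For the last step, you write that the conjugator is trivial away from $2$; the global conjugator is not. Argue instead that each $w_{m_i}$ with $m_i$ odd normalizes $\Order''$: at $2$ it is a unit of $\Order_2 \subseteq \Order''_2$, and $\Order''_p = \Order_p$ elsewhere. Its conjugate therefore lies in the normalizer of $\Order_0(2N')$ with reduced norm $m_i$, i.e.\ it represents $w_{m_i}$.
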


\begin{proposition}
\label{prop:iso9}
Let $N \geq 1$ be an integer such that $9 \parallel N$.
Define $\varepsilon(M)$ to be
\[
\varepsilon(M) =
\begin{cases}
0 & \text{ if } M \equiv 1 \bmod 3 \text{ or if } 9 \parallel M \text{ and }  M/9  \equiv 1 \bmod 3 \\
1 & \text{ else.}
\end{cases}
\]
Let $W = \langle w_{N_1}, \dots , w_{N_s} \rangle \subseteq \ALfull{D}{N}$ and 
$W' = \langle w_9^{\varepsilon(N_1)} w_{N_1}, \dots , w_9^{\varepsilon(N_s)}w_{N_s}\rangle $.
Then \[ \Curve{D}{N}/W \simeq \Curve{D}{N}/W'.\]
\end{proposition}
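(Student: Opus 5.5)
The plan is to obtain Proposition~\ref{prop:iso9} as a special case of the general isomorphism proposition just proved, taking $\sigma = V_3$, $p = 3$ and $v = v_3(N) = 2$. The main tasks are to check that the hypotheses apply and that the group produced there agrees with $W'$ once $V_3$ is discarded.

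\textbf{Step 1: the conjugation.} Since $9 \parallel N$, the matrix $S_3$ normalizes $\Gamma_0(D,N)$. Conjugation by $S_3$ therefore induces an automorphism of $\Curve{D}{N}$, and hence an isomorphism $\Curve{D}{N}/W \simeq \Curve{D}{N}/S_3 W S_3^{-1}$. Next I would compute $S_3 w_{N_i} S_3^{-1}$ for each generator using the commutation relations of \cite[Lemma 1]{FH99}, transported to the quaternionic setting. Locally at $3$ the Eichler order is $M_2$-type, since $3 \nmid D$, so the computation is literally the one for $\Gamma_0(9)$ in $\GL_2(\Q_3)$. At the other primes $S_3$ is a unit of the local order, so it commutes with the other local components. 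The expected relations are:
\begin{itemize}
\item for $3 \nmid M$, $S_3 w_M S_3^{-1} = w_M$ when $M \equiv 1 \bmod 3$, and $= w_9 w_M$ up to $V_3$-type corrections otherwise;
\item for $9 \parallel M$, the same holds with the congruence class of $M/9$ in place of that of $M$.
\end{itemize}
This is exactly the rule encoded by $\varepsilon$. The reason is that conjugating $\mathrm{diag}(a,1)$-type local representatives by $\begin{pmatrix} 3&1\\0&3\end{pmatrix}$ produces an upper-triangular correction that lies in the order precisely when the unit part is $\equiv 1 \bmod 3$.

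\textbf{Step 2: match the groups.} I would show that the conjugated group coincides with $W'$ modulo elements that act trivially, or equivalently that it is generated by the $w_9^{\varepsilon(N_i)}w_{N_i}$. The general proposition states the result in terms of the group $\langle \{w_9^{\varepsilon(m)}w_m\}, \sigma\rangle$. For this statement, where $\sigma$ is not added, I would instead use the conjugation by $S_3$ directly as in its proof, rather than its final form. Finally, I would check that $w \mapsto w_9^{\varepsilon(w)} w$ is a homomorphism on $W$. This amounts to showing that $\varepsilon$ is additive mod $2$ under multiplication of the $3$-prime-to parts in $(\Z/3)^\times \cong \{\pm1\}$, which is immediate. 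It ensures that the images of the generators generate the full conjugate group.

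\textbf{Main obstacle.} The delicate step is the local computation at $3$. One must verify that $S_3 w_M S_3^{-1}$ lands in $w_9^{\varepsilon(M)} w_M \Order^\times$, with no stray $V_3$ factor. This is especially subtle when $9 \parallel M$, where $w_M$ itself involves the matrix $\begin{pmatrix}0&1\\-9&0\end{pmatrix}$ at $3$. I would handle it by explicit $2\times 2$ matrix multiplication in $\GL_2(\Q_3)$, normalizing determinants, and reading off the coset in the normalizer of $\Gamma_0(9)$, following \cite[Section 3]{FH99} verbatim.
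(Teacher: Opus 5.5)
There is a genuine gap, and it is exactly the step you flagged as the main obstacle. Conjugation by $S_3$ does \emph{not} send $w_M$ to $w_9^{\varepsilon(M)}w_M$. So $S_3 W S_3^{-1}$ is in general neither $W'$ nor even a subgroup of $\ALfull{D}{N}$.

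\begin{itemize}
\item Already for $M=9$, where $\varepsilon(9)=0$, you get $S_3 w_9 S_3^{-1}=V_3$ by the very definition of $V_3$.
\item For $3\nmid M$, your local computation at $3$ correctly shows that $w_M$ commutes with $S_3$ when $M\equiv 1 \bmod 3$. In general, conjugating $\left(\begin{smallmatrix}1&1/3\\0&1\end{smallmatrix}\right)$ by a local unit $\left(\begin{smallmatrix}a&b\\9c&d\end{smallmatrix}\right)$ of norm $M$ gives, locally at $3$, $S_3^{k}$ with $k\equiv a/d\equiv ad\equiv M \bmod 3$.
\item So for $M\equiv 2\bmod 3$ one has $w_M S_3 w_M^{-1}=S_3^{-1}$. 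Hence $S_3 w_M S_3^{-1}=w_M S_3$, and the commutator $S_3 w_M S_3^{-1}w_M^{-1}=S_3^{-1}$ has order $3$.
\end{itemize}

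Since $\ALfull{D}{N}$ has exponent $2$, $S_3 w_M S_3^{-1}$ is not an Atkin--Lehner involution at all. It differs from $w_9w_M$ by $w_9S_3$, an element of order $3$, not by a ``$V_3$-type correction''. These are genuinely different subgroups of $\Aut(\Curve{D}{N})$, so matching ``modulo elements that act trivially'' cannot repair this. The one-line proof of the preceding proposition also mentions $S_3$, but conjugating by $S_3$ produces groups containing non-Atkin--Lehner elements such as $V_3$ or $w_MS_3$, not the $\sigma$-free group $W'$.

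The missing idea is to conjugate by the involution $V_3=S_3w_9S_3^{-1}$ instead.

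\begin{itemize}
\item Modulo $\Gamma_0(D,N)$, the elements $1,w_9,V_3,V_3'$ with $V_3':=S_3^{-1}w_9S_3$ form a Klein four-group. Locally at $3$, $\Gamma_0(9)$ is conjugate to $\Gamma(3)$, and this is the Klein subgroup of $\PSL_2(\F_3)\simeq A_4$. So $V_3$ commutes with $w_9$, and $V_3'=V_3w_9$.
\item If $M\equiv 1\bmod 3$, then $w_M$ commutes with $S_3$ and with $w_9$, hence with $V_3$.
\item If $M\equiv 2\bmod 3$, the relation above gives $w_MV_3w_M^{-1}=S_3^{-1}w_9S_3=V_3w_9$. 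Therefore $V_3w_MV_3^{-1}=V_3(V_3w_9)^{-1}w_M=w_9w_M$.
\item When $9\parallel M$, write $w_M=w_9w_{M/9}$.
\end{itemize}

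Combining these, $V_3w_MV_3^{-1}=w_9^{\varepsilon(M)}w_M$ for every $w_M\in\ALfull{D}{N}$. So the automorphism $V_3$ of $\Curve{D}{N}$ carries $W$ isomorphically onto $W'$ and induces $\Curve{D}{N}/W\simeq\Curve{D}{N}/W'$. Your homomorphism check for $\varepsilon$ then comes for free.
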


\begin{example}
We have the isomorphism $\Curve{15}{14} / \langle w_3, w_{35} \rangle \simeq   \Curve{15}{28} / \langle w_{3}, w_4, w_5 \rangle $. The curve $\Curve{15}{28} / \langle w_{3}, w_4, w_5 \rangle  $ is not hyperelliptic because of a degeneracy morphism to $\Curve{15}{7}/ \langle w_3, w_5 \rangle$.

We have the isomorphism  $\Curve{10}{63}/\langle w_{5}, w_{63} \rangle \simeq \Curve{10}{63}/\langle w_7, w_{45} \rangle$. Then the curve $\Curve{10}{63}/\langle w_7, w_{45} \rangle$ is non-hyperelliptic by upward closure from $\Curve{10}{63}/ \langle w_7, w_{10}, w_{45} \rangle$. This latter curve is not hyperelliptic because $w_2$ has $6$ fixed points. 
\end{example}

We can also find isomorphism using the modular non-Atkin--Lehner involutions.

 \subsection{Special fiber isomorphisms}
 \label{sec:specialfiber}

We can also detect isomorphisms in characteristic $p$.
In \cite[Proposition~1]{HH96}, 
they note that if $X_0^*(pN)$ and $X_0^*(N)$ have the same genus, they will be isomorphic in characteristic $p$ for any prime coprime to $N$. 
The following generalizes this observation  to arbitrary genera.
 \begin{proposition}
 \label{prop:specialfiber}
 Let $D, N \geq 1$ be integers. 
Let $p$ be a prime number such that $p \nmid N$.
Let $W\subset \ALfull{D}{N}$ such that there exists some $w_m \in W$ with $p \mid m$.
Define $W' = \{ w_m : w_m \in W \mid p \notdiv m\}.$
Then there is a normalization $(\Curve{D}{N}/W')_{\F_p} \to (\Curve{D}{Np}/W)_{\F_p}$. 
\end{proposition}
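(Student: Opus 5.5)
We use the Deligne--Rapoport / Čerednik--Drinfeld-free description of the bad reduction of $\Curve{D}{Np}$ at $p \nmid DN$, namely the $\Gamma_0(p)$-type model (Deligne--Rapoport for $D=1$, Buzzard's integral model for $D>1$).

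First, recall the special fiber structure. Since $p \nmid DN$, the curve $\Curve{D}{Np}$ has a regular (away from supersingular points) model over $\Z_p$ whose special fiber consists of two copies of $(\Curve{D}{N})_{\F_p}$. They are glued transversally at the supersingular points, with one copy mapping to the other under the Frobenius twist. The two components are the images of the two maps $(\Curve{D}{N})_{\F_p} \to (\Curve{D}{Np})_{\F_p}$, given by $A \mapsto (A, \ker F)$ and $A \mapsto (A^{(p)}, \ker V)$. The normalization of $(\Curve{D}{Np})_{\F_p}$ is therefore the disjoint union $(\Curve{D}{N})_{\F_p} \sqcup (\Curve{D}{N})_{\F_p}$.

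Second, we track the Atkin--Lehner action. An involution $w_m$ with $p \nmid m$ preserves each component and acts on it as $w_m^{(N)}$; this follows from the moduli description, since $w_m$ does not touch the $p$-level structure. An involution $w_m$ with $p \mid m$ factors as $w_p w_{m/p}$, where $w_{m/p}$ is of the previous type. The involution $w_p$ sends $(A,\ker F) \mapsto (A/\ker F, A[p]/\ker F) = (A^{(p)}, \ker V)$, so it interchanges the two components. Modulo the Frobenius identification, this swap is the identity on $\Curve{D}{N}$ (it agrees with the isomorphism identifying the two components).

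Third, we take quotients. Choose $w_{m_0} \in W$ with $p \mid m_0$. Then $W = W' \sqcup w_{m_0}W'$, where $W'$ has index $2$ and stabilizes each component, while $w_{m_0}$ swaps them. The quotient of $(\Curve{D}{N})_{\F_p} \sqcup (\Curve{D}{N})_{\F_p}$ by $W$ is therefore one copy of $(\Curve{D}{N})_{\F_p}$ modulo the stabilizer $W'$, that is, $(\Curve{D}{N}/W')_{\F_p}$. To see that the induced action of $W'$ on the chosen component is really the standard one, we compose with the identification given by $w_{m_0}$ and check that $w_{m_0}$ restricted to one component identifies it with the other compatibly with the $W'$-actions; this holds because the Atkin--Lehner group is abelian. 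Because quotients by finite groups commute with normalization and with base change to the special fiber (for tame or arbitrary finite groups, on the level of coarse quotients of reduced curves), we obtain a finite birational map
\[
(\Curve{D}{N}/W')_{\F_p} \to (\Curve{D}{Np}/W)_{\F_p},
\]
which is the normalization. Only the nodes at supersingular points get identified, so the map is a normalization and not an isomorphism in general. This generalizes \cite[Proposition~1]{HH96}.

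We expect the main obstacle to be the second step: justifying that $w_p$ swaps the components and that, after the Frobenius identification, the $w_{m/p}$ part acts as $w_{m/p}^{(N)}$, for $D>1$. For this we would invoke the moduli interpretation in \cite{Buzzard} (as was done for the supersingular count in Proposition~\ref{prop: finite bounds}). A secondary subtlety is taking quotients of a non-smooth model when $p = 2$. There we would argue on the normalization directly, using that $W'$ acts on the smooth curve $(\Curve{D}{N})_{\F_p}$, so the quotient is smooth and is the normalization of the quotient special fiber.
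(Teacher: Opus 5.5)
Your proposal is correct and follows essentially the same route as the paper: both use Buzzard's description of $\Curve{D}{Np}_{\F_p}$ as two copies of $\Curve{D}{N}_{\F_p}$ crossing transversally at the supersingular points, note that the $w_m$ with $p \nmid m$ preserve each component while $w_p$ swaps them, and conclude that the quotient of the normalization is a single component modulo $W'$. You also supply details the paper's short proof omits: the moduli description of the two components and of $w_p$, the compatibility of the $W'$-action on a component with its standard action, and the quotient/base-change subtlety at $p=2$; you also implicitly read the hypotheses as $p \nmid DN$ and $W \le \ALfull{D}{Np}$, which is clearly what the statement intends.
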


\begin{proof}
By \cite[Theorem 4.7(v)]{Buzzard}, the special fiber $\Curve{D}{Np}_{\F_p}$ is composed of two irreducible components, both isomorphic to $\Curve{D}{N}_{\F_p}$, which cross transversely at the supersingular points. Since the $w_m$ for $m \mid D$ preserve the two components, and $w_p$ interchanges them,  
there is an isomorphism of the normalization of the special fiber $(\Curve{D}{Np}/W)_{\F_p}$ with the special fiber $(\Curve{D}{N}/W')_{\F_p}$.
\end{proof}

\begin{corollary}
\label{cor:specialfiber}
With the notations of Proposition~\ref{prop:specialfiber},
if $\Curve{D}{N}/W'$ is not hyperelliptic, then $\Curve{D}{Np}/W$ is not hyperelliptic.
\end{corollary}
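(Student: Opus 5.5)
We prove the contrapositive: assuming $X = \Curve{D}{Np}/W$ is subhyperelliptic (gonality $\le 2$ over $\Qbar$), we show that $Y = \Curve{D}{N}/W'$ is subhyperelliptic as well. The plan has three steps: reduce to the special fiber at $p$, pass to its normalization, and then lift the statement back to characteristic $0$.

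\emph{Step 1: reduction mod $p$.} A degree $\le 2$ map $X_{\Qbar} \to \P^1$ is defined over some number field. We specialize it at a prime $\mathfrak{p} \mid p$, working on a model of $X$ over the ring of integers whose special fiber is $(\Curve{D}{Np}/W)_{\F_p}$. This fiber is semistable and reduced; Buzzard's description has two copies of $\Curve{D}{N}_{\F_p}$ crossing transversally, and since $w_p \in W$ up to $W'$ interchanges them, the quotient is irreducible. Taking the closure of the graph, or equivalently specializing the degree $2$ line bundle together with its two sections, we obtain a map of degree $\le 2$ from the special fiber to $\P^1_{\Fbar_p}$.

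\emph{Step 2: pass to the normalization.} The map from Step 1 is nonconstant on the irreducible special fiber, since its total degree is preserved. Composing with the normalization $(\Curve{D}{N}/W')_{\F_p} \to (\Curve{D}{Np}/W)_{\F_p}$ from Proposition~\ref{prop:specialfiber}, a birational map, gives a nonconstant map of degree $\le 2$ on $Y_{\Fbar_p}$. Hence $\gon{Y_{\Fbar_p}}{} \le 2$.

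\emph{Step 3: lift back to characteristic $0$.} Here we use that $p \nmid DN$, so $Y$ has good reduction at $p$. For smooth proper curves, gonality can only drop under specialization; the direction we need is the reverse, and that is the main obstacle. We do not use any lifting of maps. Instead we invoke the structure of hyperelliptic curves under good reduction: if $Y$ has good reduction and its special fiber is hyperelliptic, then the canonical map of $Y_{\Fbar_p}$ is not an embedding. Since the canonical map is compatible with base change for a smooth family, the canonical map of the generic fiber is not an embedding either, because being a closed embedding is an open condition on the base. So $Y$ is hyperelliptic, or has genus $\le 2$ and is subhyperelliptic anyway.

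The two points needing care are the degree bookkeeping in Step 1, which is standard via Deligne--Mumford stable reduction, and the openness argument in Step 3, which requires $p \ne 2$ only if one uses the hyperelliptic involution rather than the canonical map. Using the canonical map avoids that restriction.
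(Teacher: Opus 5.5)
\textbf{The gap is Step 3: the openness argument runs in the wrong direction.}

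Steps 1 and 2 are what the paper intends. The corollary has no separate proof there; it is meant to follow at once from Proposition~\ref{prop:specialfiber}. A degree-$2$ map on $X=\Curve{D}{Np}/W$ specializes to a degree-$2$ line bundle with at least two sections on the irreducible special fibre. Pulling back along the normalization gives $\gamma(Y_{\Fbar_p})\le 2$ for $Y=\Curve{D}{N}/W'$.

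One small caveat in Step 1: taking the closure of the graph is not ``equivalent'' to specializing the line bundle. The degree can be absorbed by vertical components $\{x\}\times\P^1$, leaving a constant horizontal part; for example $f=px$ on $\P^1_{\Z_p}$ does this. The line-bundle version, with upper semicontinuity of $h^0$, is the one that works.

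Now Step 3. The locus where the relative canonical map is a closed embedding is open. So if the canonical map of $Y_{\Fbar_p}$ is an embedding, this open locus contains the closed point of $\mathrm{Spec}$ of the DVR, hence is everything, and $Y_{\Qbar}$ is not hyperelliptic. That is exactly ``gonality only drops under specialization'', the direction you yourself said you could not use. If the special fibre is hyperelliptic, openness tells you nothing: the embedding locus can be just the generic point.

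This is not merely a weak argument; the implication you need is false in general. Take a conic $Q$ and a quartic $G$ in general position, with $p$ odd. Then
\[ \{Q(X,Y,Z)=pW,\ W^2=G(X,Y,Z)\}\subset\P(1,1,1,2) \]
is smooth over $\Z_p$. Its generic fibre is the smooth plane quartic $Q^2=p^2G$, which is non-hyperelliptic. Its special fibre is the double cover $W^2=G$ of the conic $Q=0$, which is hyperelliptic of genus $3$. So no argument of this type can give ``$Y_{\Fbar_p}$ subhyperelliptic $\Rightarrow$ $Y$ subhyperelliptic''.

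What Steps 1--2 actually prove is the version with the hypothesis on the special fibre: if $(\Curve{D}{N}/W')_{\Fbar_p}$ is not subhyperelliptic, then $\Curve{D}{Np}/W$ is not hyperelliptic. This is also how the paper uses the corollary. In its example, non-hyperellipticity of $\Curve{10}{9}/\langle w_{10}\rangle$ is checked by counting points over $\F_7$, i.e.\ on the special fibre.

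So read the corollary's hypothesis on the special fibre, ``with the notations of Proposition~\ref{prop:specialfiber}'', and delete Step 3. By the correct direction of specialization, this hypothesis is stronger than non-hyperellipticity of $Y$ over $\Q$.
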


\begin{example}
The curve $\Curve{10}{63}/ \langle w_{10}, w_{63} \rangle$ is isomorphic to $\Curve{10}{9}/ \langle w_{10} \rangle $ over $\F_7$. 
We show that the latter curve is not hyperelliptic by counting points over $\F_7$.
\end{example}

\section{Weil Polynomials}
\label{sec:Weilpolys}

\subsection{Weil polynomials over \texorpdfstring{$q \neq 2$}{q ne 2}}
Let $q$ be a prime of good reduction for a curve $X$. The Weil polynomial of a curve $X/\F_q$ encodes information about the point counts of $X/\F_q$ and the isogeny class of the curve. For small genus and primes, the LMFDB \cite{lmfdb} contains the exact list of which isogeny classes contain hyperelliptic Jacobians. 
We used this data, which is complete in $g = 3$, $p \leq 23$, $g = 4$, $p = 2, 3, 5$, and $g = 5,6$, $p = 2$, to rule out curves from being hyperelliptic. The Weil polynomials for Shimura quotients can be computed using the trace formula techniques described in Section \ref{sec:trace}.

Even without a census of hyperelliptic curves over $\F_q$ it is possible to give parity restrictions on the Weil polynomials of hyperelliptic curves. If the Weil polynomial of a Shimura curve quotient does not satisfy the  parity restrictions, then the curve is not hyperelliptic. 

On a hyperelliptic curve over $\F_q$, Frobenius acts on the Weierstrass points and forms a partition $\{d_i\}$ of $2g+2$ consisting of the cardinalities of the orbits. 
The following is \cite[Proposition 2.4]{CostaWeilPolys} in the case of characteristic $p \neq 2$ and extends to $p = 2$ readily, cf. \cite[Remark~2.6]{CostaWeilPolys}.
\begin{proposition}{\cite[Proposition 2.4]{CostaWeilPolys}. }
Let $C$ be a hyperelliptic curve of genus $g$ over $\F_q$ and $q= p^k$, $k \in \N$.
\begin{enumerate}
\item When $p\neq 2$, let $\{d_i\}_{i = 1}^r$ be a partition of $2g+2$ which records the sizes of the orbits of Frobenius acting on the $2g+2$ geometric Weierstrass points. 

\item When $p = 2$,  let the  $2$-rank of $C$ be $f$. Let $\{d_i\}_{i=1}^r$ be a partition of $f+1$ which records the sizes of the orbits of Frobenius acting on the $f + 1$ geometric Weierstrass points (by the Deuring--Shafarevich formula).
    \end{enumerate}
Then, in both cases, for any $\ell \neq p$  we have
    \begin{equation}
    \label{eq:admissibleweil}
    \det (1 - \Frob t | H^1_{\text{{\'e}t}}(C^{\alg}, \Q_{\ell})) \equiv 
    \prod_{i=1}^r (t^{d_i} - 1) / (t - 1) \pmod 2.
    \end{equation}
\end{proposition}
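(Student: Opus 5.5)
The proof reduces the congruence to a comparison of characteristic polynomials of permutation representations over $\F_2$, using the $2$-torsion of the Jacobian. The key point is that the mod $2$ reduction of $\det(1 - \Frob t \mid H^1_{\text{\'et}}(C^{\alg},\Q_\ell))$ does not depend on $\ell$. Its coefficients are integers independent of $\ell$, since the polynomial is the reciprocal of the Weil polynomial. So we may take $\ell = 2$ when $p \neq 2$. We then identify $H^1_{\text{\'et}}(C^{\alg}, \Z_2)/2 \simeq J[2]^\vee$ (Tate module mod $2$), so the polynomial reduces mod $2$ to $\det(1 - \Frob t \mid J[2])$. Over $\F_2$ the sign is irrelevant, and $J[2]$ and its dual have the same characteristic polynomial, so dualizing costs nothing.

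For $p \neq 2$, write $C : y^2 = f(x)$ over $\F_q$ (or in the model with a point at infinity) with Weierstrass points $P_1, \dots, P_{2g+2}$. The classical description gives $J[2]$ as the subquotient of the permutation module $\F_2^{2g+2}$: take the sum-zero vectors modulo the all-ones vector, via $\sum a_i P_i \mapsto \sum a_i [P_i - P_1]$. This map is Galois equivariant, because Frobenius permutes the $P_i$ and the divisor of $y$ is $\sum P_i - (g+1)\cdot(\text{poles})$. Let $V = \F_2^{2g+2}$ be the permutation module, $V_0$ the sum-zero submodule, and $\mathbf 1$ the all-ones line. There is an equivariant filtration $\mathbf 1 \subset V_0 \subset V$. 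Since $2g+2$ is even, $\mathbf 1 \subset V_0$, and $\Frob$ acts trivially on both $\mathbf 1$ and $V/V_0$. Hence
\[
\det(1 - \Frob t \mid V) = (1-t)^2 \det(1 - \Frob t \mid J[2]).
\]
The characteristic polynomial on a permutation module is $\prod_i (1 - t^{d_i})$, one factor for each $d_i$-cycle. Dividing by $(1-t)^2$ gives $\prod_i (t^{d_i}-1)/(t-1)$ up to sign and one more factor of $(t-1)$. The bookkeeping works as follows: $\deg \prod (t^{d_i}-1) = 2g+2$, and we need total degree $2g$ for $J[2]$. So the paper's $\prod_{i}(t^{d_i}-1)/(t-1)$ should be read as $\prod (t^{d_i}-1)/(t-1)^2$. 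Alternatively, the second factor $(t-1)$ is absorbed because the normalization in \cite{CostaWeilPolys} uses the reversed polynomial. I will check this degree and normalization convention carefully against \cite[Proposition 2.4]{CostaWeilPolys}. Everything holds mod $2$, so signs vanish.

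For $p = 2$, the étale-$\ell$ independence still reduces us to the $2$-torsion, but now $J[2](\Fbar_2)$ has dimension equal to the $2$-rank $f$, and étale cohomology at $\ell = 2$ is unavailable. Instead I would use the Weil polynomial directly. Mod $2$, $\det(1-\Frob t)$ factors as (unit-root part) $\times\, t^{\ast}$-type part. The non-unit-root Newton slopes contribute factors $\equiv 1$ or a power of $t$ after reduction, and the unit-root part has degree $f$ and equals the characteristic polynomial of $\Frob$ on $J[2](\Fbar_2)$, an $f$-dimensional $\F_2$-space. For an ordinary-type hyperelliptic curve in characteristic $2$ given by $y^2 + h(x) y = F(x)$, the points over the $f+1$ distinct roots of $h$ (the Weierstrass points, by Deuring--Shafarevich) give $J[2](\Fbar_2)$ as the sum-zero quotient of the permutation module on these $f+1$ points. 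The same filtration argument, now with only one trivial factor, yields $\prod (t^{d_i}-1)/(t-1)$.

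The main obstacle is the $p=2$ case. There I need to correctly match the mod $2$ reduction of the $\ell$-adic characteristic polynomial with the action on the étale part $J[2](\Fbar_2)$, which requires the slope argument. I also need to verify the explicit description of $J[2]$ in terms of the roots of $h$. For that description I would follow \cite[Remark~2.6]{CostaWeilPolys}.
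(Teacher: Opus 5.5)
Your argument follows the same route as the paper. For $p\neq 2$ the paper cites \cite[Proposition 2.4]{CostaWeilPolys}, and for $p=2$ it describes $J[2]^{\text{\'et}}$ via differences of Weierstrass points and calls the rest similar. In both cases the mod-$2$ polynomial is read off from the Frobenius action on (the \'etale part of) $J[2]$, realized through the permutation module on the ramification points.

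\textbf{Odd characteristic.} Your first reading is the right one, and the printed formula has a typo there. Modulo $2$ the left side has degree exactly $2g$, because its leading coefficient $q^g$ is odd. But $\prod_i (t^{d_i}-1)/(t-1)$ has degree $2g+1$, so the divisor must be $(t-1)^2$, exactly as your filtration $\mathbf{1}\subset V_0\subset V$ gives. Discard the ``reversed polynomial'' alternative: reversal does not change the degree, and these polynomials are palindromic mod $2$ anyway.

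\textbf{Characteristic $2$.} The single factor $(t-1)$ is correct, but three points in your sketch need fixing.
\begin{enumerate}
\item It is the sum-zero \emph{subspace} $V_0\subset\F_2^{f+1}$, not a quotient, that is isomorphic to $J[2](\Fbar_2)$. Both have dimension $f$, so either generation or injectivity suffices to get the isomorphism.
\item The point at infinity must be counted among the $f+1$ ramification points when it ramifies, not just the points over roots of $h$.
\item No ordinarity hypothesis is needed.
\end{enumerate}

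\textbf{The unit-root step.} Your identification of $\det(1-\Frob t)\bmod 2$ with $\det(1-\Frob t\mid J[2](\Fbar_2))$, via the unit-root factor of the Weil polynomial, is exactly what the paper leaves implicit. It is the standard fact that Frobenius on $T_2J(\Fbar_q)$ has characteristic polynomial equal to that unit-root factor, for instance via the slope decomposition of the Dieudonn\'e module. You should cite it rather than treat it as automatic.
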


\begin{proof}
    When $p \neq 2$ this is \cite[Proposition 2.4]{CostaWeilPolys}. 
    For $p = 2$ the proof is similar to the original proof. 
    We only need to note that, as in \cite[Lemma 2.2]{CostaWeilPolys}, if $C\to \P^1$ is ramified (resp. split) at infinity, with fiber $\infty$ (resp. $\{ \infty_1, \infty_2 \}$), and its set of ramification (Weierstrass) points is $\calR$, then $J[2]^{\text{{\'e}t}}$  is generated by 
    $$
    \{ e_U : U \subseteq \calR \mid \#U \text{ is even} \},
    $$
    where
    \[ e_U = \sum_{P \in U} P - |U| (\infty),  \quad \text{ resp. }  \quad e_U = \sum_{P \in U} P - \frac{|U|}{2} (\infty_1+ \infty_2)\]
    In the case $p = 2$ the group can be expressed as the vector space obtained from $\F_2^{f+1}$ by considering all vectors with even number of non-zero entries, where rank shows there are no non-trivial relations. 
\end{proof}

Such polynomials of the form in equation \eqref{eq:admissibleweil} are called \defi{admissible Weil polynomials}. 
\begin{corollary}
The set of admissible Weil polynomials modulo $2$ is independent of the characteristic $p$ and depends only on $g$ when $p>2$ or  $f$ when $p = 2$.
\end{corollary}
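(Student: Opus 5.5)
The corollary should follow by reading off the right-hand side of \eqref{eq:admissibleweil} as a purely combinatorial object. By the preceding proposition, the set of admissible Weil polynomials modulo $2$ is exactly
\[
\mathcal{A}_n \colonequals \Bigl\{ \prod_{i=1}^r (t^{d_i}-1)/(t-1) \bmod 2 \;:\; (d_1,\dots,d_r) \text{ a partition of } n \Bigr\} \subseteq \F_2[t],
\]
where $n = 2g+2$ when $p>2$ and $n = f+1$ when $p=2$. This description involves only the integer $n$ and the partition; neither $p$ nor $q$ appears. So I would first make explicit that the polynomials $(t^{d}-1)/(t-1)$ have integer coefficients independent of the field, and that reducing them modulo $2$ gives an element of $\F_2[t]$ depending only on $d$. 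It then follows that $\mathcal{A}_n$ depends only on $n$, hence only on $g$ when $p>2$ and only on $f$ when $p=2$.

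The one step needing care is that every partition of $n$ actually occurs, in every characteristic $p$ in question. Otherwise the set of admissible polynomials in characteristic $p$ could be a proper, $p$-dependent subset of $\mathcal{A}_n$. Since the definition of \emph{admissible} is the formal one (polynomials of the shape in \eqref{eq:admissibleweil}), I would simply adopt it: admissibility is a condition on the shape, not on realizability. With that reading the corollary is immediate.

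If realizability is wanted, I would argue as follows. For $p>2$ and any partition $(d_i)$ of $2g+2$, choose over $\F_q$ a separable polynomial $h$ of degree $2g+2$ that factors into irreducibles of degrees $d_i$. Distinct irreducibles of each degree exist in sufficient number once $q$ is large, and one can enlarge $q$, since the statement concerns residues modulo $2$. The curve $y^2 = h(x)$ then realizes that partition. For $p=2$, one takes Artin--Schreier curves $y^2 + y = R(x)$ with poles prescribed at Frobenius orbits of sizes $d_i$. This is the only place where a small construction is needed, and it is the step I expect to be the main (mild) obstacle. In the main argument I would rely on the formal definition.
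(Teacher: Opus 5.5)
Your argument is correct and matches what the paper intends; the paper gives no separate proof. \emph{Admissible} is defined by the shape of \eqref{eq:admissibleweil}, which is a function of a partition of $2g+2$ (resp.\ $f+1$) alone, so the corollary is immediate. The realizability digression is not needed, and in characteristic $2$ it only works if the genus may vary (e.g.\ simple poles, giving $g=f$): Frobenius-conjugate poles of a reduced $R$ share the same odd order $m_i$, with $\sum_i (m_i+1)=2g+2$ over all geometric poles, so e.g.\ $g=3$, $f=2$ rules out the orbit type $\{3\}$, which is one more reason the formal reading you adopt is the right one.
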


In  \cite[Proposition 2.10]{CostaWeilPolys} it is shown that they are enumerated by partitions with distinct parts. A slight modification of the proof shows the following proposition.
\begin{proposition}[{\cite[Proposition 2.10]{CostaWeilPolys}}]
\label{prop:weilparity}
Let $q = p^k$ be a prime power.
Let $\calP$ be the set of all partitions of $2g+2$ into odd parts (or $f+1$ in the case of $p = 2$).
The admissible Weil polynomials modulo 2 for Jacobians of hyperelliptic curves of genus $g$ (resp. $2$-rank $f$) over a finite field $\F_q$ are indexed by $\calP$. In other words, each $\{d_i\}_{i = 1}^r \in \calP$  gives a unique admissible Weil polynomial mod 2
\[\prod_{i=1}^r (t^{d_i} - 1) / (t - 1) \mod 2 \]
and these are all of the admissible Weil polynomials mod 2.
\end{proposition}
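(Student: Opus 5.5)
Two things have to be shown: every admissible Weil polynomial mod $2$ has the form $\prod_i (t^{d_i}-1)/(t-1)$ with $\{d_i\}$ a partition into odd parts, and distinct odd-part partitions give distinct polynomials. The tool throughout is unique factorisation in $\F_2[t]$ together with the behaviour of $t^d-1$ in characteristic $2$.

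\emph{Reduction to odd parts.} Take an arbitrary partition $\{d_i\}$ of $n$, where $n=2g+2$ (or $n=f+1$ when $p=2$), as allowed by the Proposition above. Write each part as $d_i = 2^{a_i} e_i$ with $e_i$ odd. In $\F_2[t]$ we have $t^{d_i}-1 = (t^{e_i}-1)^{2^{a_i}}$. So each factor $t^{d_i}-1$ is the same as the product of $2^{a_i}$ copies of $t^{e_i}-1$. Replacing every part $d_i$ by $2^{a_i}$ parts equal to $e_i$ therefore gives a partition of the same integer $n$ into odd parts. The product $\prod_i (t^{d_i}-1)$ is unchanged mod $2$, and since the number of parts $r$ changes, we simply divide by the corresponding power of $(t-1)$. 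Hence every admissible polynomial comes from an element of $\calP$. Conversely, every element of $\calP$ is itself a partition of $n$. The only point to check is that it is realised by some Frobenius orbit structure, or equivalently that the statement concerns the set of polynomials of the form \eqref{eq:admissibleweil}; this is how the claim "these are all of the admissible Weil polynomials" should be read.

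\emph{Uniqueness.} Suppose two odd-part partitions give the same polynomial. Multiplying through by the appropriate power of $(t-1)$ and comparing degrees, which must both equal $n$ minus the number of parts, the parts counts agree. We then get an equality $\prod_i (t^{d_i}-1) = \prod_j (t^{d'_j}-1)$ in $\F_2[t]$. For odd $d$, the polynomial $t^d-1$ is separable over $\F_2$ and equals $\prod_{e\mid d}\Phi_e(t)$, where for odd $e$ the cyclotomic polynomials $\Phi_e$ mod $2$ are pairwise coprime. For each odd $e$, the multiplicity of $\Phi_e$ in the product (counted via any irreducible factor of $\Phi_e$ mod $2$) is $\#\{i : e \mid d_i\}$. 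Möbius inversion over the divisibility poset of odd integers then recovers $\#\{i : d_i = e\}$ for every $e$, so the two multisets coincide.

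The main obstacle is the uniqueness step. The coprimality of $\Phi_e$ and $\Phi_{e'}$ mod $2$ for distinct odd $e,e'$ holds because $t^{m}-1$ is separable when $m$ is odd. Using this, I would argue by induction on the largest part: the largest part $d$ is the largest $e$ whose primitive $e$-th roots of unity appear as roots over $\Fbar_2$. Cancel one factor $t^d-1$ from both sides and repeat.
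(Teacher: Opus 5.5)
Your two ingredients are the right ones, but you have misread the normalisation in \eqref{eq:admissibleweil}, and as written this breaks your reduction step. The polynomial attached to $\{d_i\}$ is $\prod_i (t^{d_i}-1)$ divided by one fixed power of $t-1$, not by $(t-1)^r$. Two things force this reading. The left-hand side of \eqref{eq:admissibleweil} has a degree that does not depend on the Frobenius orbit structure. And the remark after the statement compares products over partitions with different numbers of parts.

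Under your reading, replacing a part $2^a e$ by $2^a$ copies of $e$ changes $r$, and hence changes the polynomial. For instance, $\{4\}$ would give $(t^4-1)/(t-1)\equiv (t+1)^3$, while $\{1,1,1,1\}$ would give $1$. ``Dividing by the corresponding power of $(t-1)$'' does not undo this, so you would not have shown that every admissible polynomial arises from $\calP$.

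With the correct reading the step needs no adjustment. The identity $t^{2^a e}-1=(t^e-1)^{2^a}$ in $\F_2[t]$ shows that $\prod_i (t^{d_i}-1)$ depends only on the odd refinement, and that is the whole surjectivity argument. In the uniqueness step, multiply both sides by the same power of $t-1$. The degree comparison then says nothing about part counts, but you do not need it: your multiplicities $\#\{i : e \mid d_i\}$ together with M\"obius inversion (or your induction on the largest part) recover the entire multiset, including $r$.

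With this corrected, your argument is a complete, correct proof, and it is more self-contained than the paper's. The paper does not argue directly. It cites \cite[Proposition 2.10]{CostaWeilPolys}, where admissible polynomials mod $2$ are enumerated by partitions into \emph{distinct} parts. It then notes that each odd-part partition is equivalent (gives the same polynomial mod $2$) to a distinct-part one by the same argument, i.e.\ merging equal parts via $(t^d-1)^2=t^{2d}-1$, which is Glaisher's correspondence. You instead take odd parts as the normal form from the start. You prove injectivity explicitly from the separability of $t^m-1$ for odd $m$, which gives pairwise coprimality of the $\Phi_e \bmod 2$. The two routes are equivalent via that correspondence; yours makes visible the injectivity that the paper delegates to the citation.
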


Note that if $\prod_{j=1}^s (t^{e_j} - 1) / (t - 1) \mod 2$
is an admissible Weil polynomial, it is not necessarily the case that $\{e_j\}_{j=1}^s \in \calP$. However, there is some $\{d_i\}_{i = 1}^r \in \calP$ such that
\[\prod_{j=1}^s (t^{e_j} - 1) / (t - 1)\equiv\prod_{i=1}^r (t^{d_i} - 1) / (t - 1)  \mod 2  \]
\begin{proof}[Proof of Proposition \ref{prop:weilparity}]
The statement is essentially contained in the proof of \cite[Proposition 2.10]{CostaWeilPolys}. Note that every $\{d_i\}_{i = 1}^r$ partition of $2g+2$ (resp. $f+1$) with odd parts is equivalent to a partition with distinct parts by the same proof therein.
\end{proof}

\begin{remark}
In  \cite[Remark 2.2]{GonzalezAut}, Gonz\'alez  describes a technique using mod $2$ point counts to rule out the existence of involutions in the automorphism group of a curve. 
In the case of involutions, all the information gained by this  approach is already contained in the Weil polynomials, hence we do not gain anything by applying it (see \cite[Section 4.1]{CostaWeilPolys}).
\end{remark}

\begin{example}
The genus 3 curve $\Curve{6}{25}/\langle w_{50} \rangle$ over $\F_{11}$ has Weil polynomial
\[t^6 - 2t^5 + 33t^4 - 44t^3 + 363t^2 - 242t + 1331.\]
While this does satisfy the parity constraints from Proposition \ref{prop:weilparity}, we have data from the LMFDB in this case that completely classifies the hyperelliptic isogeny classes. The isogeny class this Weil polynomial represents does not contain a  hyperelliptic curve over $\F_{11}$.

The genus 5 curve $\Curve{6}{157}/\langle w_3, w_{157} \rangle $ has bad reduction at $p = 2,3$. At $p = 5,7, 11,$ $13, 17,$ and $19$ the Weil polynomials mod $2$ are one of the 15 Weil polynomials satisfying the parity constraints of Proposition \ref{prop:weilparity}. At $p = 23$, however, the Weil  polynomial mod $2$ is
\[t^{10} + t^9 + t^8 + t^6 + t^5 + t^4 + t^2 + t + 1 \pmod 2\]
which does not satisfy the parity constraints. This proves the curve is not hyperelliptic.
\end{example}

\if{false}
\subsection{Ramification Points}\label{sec:ramification}
Let $X$ be a smooth algebraic curve of genus $g$. If $X$ has an involution $\iota$ then, by Riemann--Hurwitz, the quotient map $X \to X/\iota$ can have at most $2g+2$ ramification points. In this section, we apply this fact to   $X = \StarCurve{D}{N}$  a star curve of genus $g > 2$. 
We explain a technique for showing the number of ramification points of the map $X \to X/\iota$  for any involution $\iota$ would exceed $2g+2$. This technique works over $\F_p$, where $p$ is a prime of good reduction for $X$. Then  $\Aut_\Q(X) \hookrightarrow \Aut_{\F_p}(X_{\F_p})$, so the non-existence of involutions in $\Aut_{\F_p}(X_{\F_p})$ implies the non-existence over $\Q$.
The following criterion is  \cite[Remark 2.2]{GonzalezAut}.
\begin{theorem}
\label{thm:fpauts}
    Let $X$ be a nice curve over $\F_p$ of genus $g>2$. 
    For $x \in \Z$, write $\mathrm{mod_2}(x)$ for the integer remainder when $x$ is divided by $2$.
    Consider the sequence
    \[ P_p(n) \colonequals  \mathrm{mod}_2 ( \sum_{d|n}  \mu(n/d) | X(\F_{p^d})|) .\]
    If there is some $k > 0 $ such that 
    \[  \sum_{j \geq 0}^k (2j+1) P_p(2j+1) > 2g +2,\] then $\Aut_{\F_p}(X)$ does not contain an involution.
\end{theorem}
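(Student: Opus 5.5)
We argue by contrapositive: assuming $\Aut_{\F_p}(X)$ contains an involution $\iota$, we show that $\sum_{j=0}^k (2j+1)P_p(2j+1) \le 2g+2$ for every $k$. The idea is to interpret $P_p(n)$ as the parity of the number of closed points of degree exactly $n$ on $X$, and to use $\iota$ to pair these points off.

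\textbf{Step 1: interpreting $P_p(n)$.} Möbius inversion gives
\[
\sum_{d\mid n}\mu(n/d)\,\#X(\F_{p^d}) = n\, a_n,
\]
where $a_n$ is the number of closed points of $X$ of degree exactly $n$. For odd $n$ we have $n a_n \equiv a_n \pmod 2$, so $P_p(n)$ is the parity of $a_n$. This is why only odd indices $2j+1$ appear in the criterion.

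\textbf{Step 2: pairing by $\iota$.} Since $\iota$ is defined over $\F_p$, it commutes with Frobenius. It therefore permutes the set of closed points of each fixed degree $n$. Closed points not fixed by $\iota$ come in pairs, so
\[
a_n \equiv \#\{\text{closed points of degree } n \text{ fixed by } \iota\} \pmod 2 .
\]
Consequently, if $P_p(2j+1)=1$, there is at least one closed point of degree $2j+1$ fixed by $\iota$. Such a point consists of $2j+1$ distinct geometric points, and each of them is fixed by $\iota$.

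\textbf{Step 3: bounding fixed points.} Summing over $j$, the involution $\iota$ has at least $\sum_{j=0}^{k}(2j+1)P_p(2j+1)$ geometric fixed points. When $p\ne 2$, Riemann--Hurwitz applied to $X\to X/\iota$ bounds the number of fixed points by $2g+2$, with equality exactly when the quotient has genus $0$. This contradicts the hypothesis, and hence no involution exists over $\F_p$.

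\textbf{Main obstacle: characteristic $2$.} When $p=2$, a geometric fixed point of $\iota$ is wildly ramified. In that case Riemann--Hurwitz gives a different-exponent contribution of at least $2$ per fixed point. We still get $2g-2 \ge 2(2g_{X/\iota}-2) + 2r$, where $r$ is the number of fixed points, so $r \le g+1 \le 2g+2$ and the same conclusion holds. One must also check that no hidden issue arises from $\iota$ being defined only over $\F_p$. This is covered by the commutation with Frobenius used in Step 2, so I expect this check to be routine.
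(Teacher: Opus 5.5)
Your argument follows the same route as the paper: Möbius inversion to isolate points of exact degree $n$, pairing by $\iota$, Galois orbits of size $n$, then Riemann--Hurwitz. However, one step in Step 2 is stated without the reason it holds.

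\textbf{The unjustified step.} If $\iota$ maps a closed point $x$ of degree $n$ to itself, it does not follow that $\iota$ fixes the geometric points over $x$. It could act on the orbit $\{Q,\Frob Q,\dots,\Frob^{n-1}Q\}$ by $\iota(Q)=\Frob^{k}Q$ with $k\not\equiv 0 \pmod n$. For example, take $p$ odd and $a$ a non-square in $\F_p$: the map $x\mapsto -x$ on $\P^1$ stabilizes the degree-$2$ closed point $x^2=a$ but fixes neither $\pm\sqrt{a}$.

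\textbf{The fix.} Since $\iota$ commutes with $\Frob$, you get $\iota(\Frob^iQ)=\Frob^{i+k}Q$ for all $i$. Then $\iota^2=1$ gives $2k\equiv 0 \pmod n$. Because $n=2j+1$ is odd, this forces $k\equiv 0$. So oddness is used a second time here, not only in $na_n\equiv a_n \pmod 2$.

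\textbf{How the paper avoids this.} It works with geometric points throughout. It pairs $Q$ with $\iota(Q)$ inside $X(\F_{p^n})'$, so $P_p(n)$ is directly the parity of $\#(X(\F_{p^n})'\cap\calR)$. This number is divisible by $n$ because $\calR$ is Frobenius-stable, so $P_p(n)=1$ forces at least $n$ fixed points with no further argument.

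\textbf{Where your version improves on the paper.} Your treatment of $p=2$ is more careful. You use that the different exponent is at least $2$ at each wildly ramified point, which gives at most $g+1$ fixed points. The paper simply invokes $\#\calR\le 2g+2$ from Riemann--Hurwitz without addressing wild ramification.
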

\begin{proof}
Suppose there were some involution $\iota : X \to X$. Let $\calR$ denote the set of ramification points of the map $X \to X/\iota$ over $\overline{\F}_p$.  At each step $n$, we would like to count the number of ramification points defined over $\F_{p^n}$ (but we will not be able to do exactly that).

 Let us write
\[  X(\F_{p^n})' \colonequals | X(\F_{p^n}) \setminus   \cup_{\substack{d |n \\ d \neq n}} X(\F_{p^{d}}) |\]
for the set of points that are defined over $X(\F_{q^n})$ but not a smaller field.
Then
\[  \sum_{n \geq 1} |\calR \cap  X(\F_p^n)'| = |\calR|  \] and
by Riemann--Hurwitz, we have that $|\calR| \leq 2g+2.$

Consider the orbit of a point $Q \in X(\overline{\F}_{p^n})'$ under the action of $\iota$. If $Q \in \calR$, the orbit has order $1$, otherwise the orbit has order 2. Therefore $\mathrm{mod}_2 |X(\F_{p^n})' | = \mathrm{mod}_2|X(\F_{p^n})' \cap \calR | .$
Note that each time we have a fixed point of $\iota$ in $ X(\F_{p^n})'$, by applying the action of $\Gal(\F_{p^n} /\F_p)$, we obtain $n$ fixed points. Considering this Galois action (which stabilizes $ X(\F_{p^n})'$ and $\calR$), we see that $| X(\F_{p^n})'|$ and $| X(\F_{p^n})' \cap \calR|$ are divisible by $n$. So, for $n$ even we will not receive any information from our point counts, but for odd $n = 2j+1$ we have at least one ramification point, and therefore at least $n$, and so
 \[ n \cdot \mathrm{mod}_2(X(\F_{p^n})') \leq| X(\F_{p^n})' \cap \calR| .\]
Therefore  $X$ has no involution if it satisfies the inequality
\[  \sum_{j \geq 0} (2j+1) \cdot  \mathrm{mod}_2(X(\F_{p^{2j+1}})') < 2g+2 .\]
Now apply M\"obius inversion to count points in $X(\F_{p^{2j+1}})'$ (i.e. to get the formula for $P_p(n)$) and deduce the theorem above. 
\end{proof}
Note that it is enough to sum up to $k = 2g+2$, because of the bound on $\calR$.

\begin{example}
By applying Theorem \ref{thm:fpauts}, we prove that the genus 3 curve $\StarCurve{6}{317}$ has no $\F_7$-involutions and is therefore non-hyperelliptic.
\end{example}

\fi
\section{Results}
\label{sec:results}
\subsection{Filtering}
The algorithm proceeds in two phases.  
First, we enumerate and filter the star curves $\StarCurve{D}{N}$ (Table \ref{tab:star-curve-filtering}).  
Second, for each pair $(D,N)$, we expand to all Atkin--Lehner quotients $\Curve{D}{N}/W$ above any possibly subhyperelliptic star curve. We then apply the remaining filters to the full set of quotients (Table \ref{tab:quotient-filtering}).  
In the tables, ``new filtered'' means newly ruled out from being hyperelliptic at that stage, ``new proved'' means newly proved to be subhyperelliptic at that stage, while ``open'' is the number of curves  still not determined, and ``filtered'' and ``proved'' columns count the running totals in each category so far.
\begin{table}[ht]
\centering
\small
 \renewcommand{\arraystretch}{1.15}
  \setlength{\tabcolsep}{3pt}
\begin{tabularx}{\textwidth}{Xrrrrr}
\hline
Stage & Open & Filt. & Proved & New filt. & New proved \\
\hline
Reduction to finitely many pairs $(D,N)$ (Section~\ref{sec:reduction})
& 2342 & 0 & 0 & -- & -- \\
Genus $\leq 2$ curves (Section~\ref{sec:genus})
& 1319 & 0 & 1023 & 0 & 1023 \\
Finite field point count (Section~\ref{sec:trace})
& 504 & 815 & 1023 & 815 & 0 \\
Special fiber isomorphisms (Section~\ref{sec:specialfiber})
& 495 & 824 & 1023 & 9 & 0 \\
Weil polynomials (Section~\ref{sec:Weilpolys})
& 234 & 1085 & 1023 & 261 & 0 \\
Modular non-Atkin--Lehner involutions (Section~\ref{sec:modularnonAL})
& 194 & 1110 & 1038 & 25 & 15 \\
\hline
\end{tabularx}
\caption{Filtering the $2342$ star curves $\StarCurve{D}{N}$.}
\label{tab:star-curve-filtering}
\end{table}

At the end of the star curve phase, among the $2342$ star curves, $1038$ are
proved subhyperelliptic, $1110$ are ruled out, and $194$ remain open.  

For each pair $(D,N)$, we next generate all quotients $\Curve{D}{N}/W$, with $W\subset \ALfull{D}{N}$.
This expands the list from $2342$ star curves to $18379$ Atkin--Lehner quotients. At this initial point, we do yet not apply the closure principles (Proposition \ref{def:closure}), so the only filtered and proved curves are the star curves.
We then apply the remaining filters to the full list of Atkin--Lehner quotients. After each stage, we apply upward closure, downward closure, and isomorphism testing to propagate determinations through the cover lattice (Proposition~\ref{def:closure} and Section~\ref{sec:isomorphism}). 
\begin{table}[ht]
\centering \small 
 \renewcommand{\arraystretch}{1.15}
  \setlength{\tabcolsep}{3pt}
\begin{tabularx}{\textwidth}{Xrrrrr}
\toprule
Operation & Open & Filt. & Proved & New filt. & New proved \\
\midrule
Generate all quotients $\Curve{D}{N}/W$
& 16231 & 1110 & 1038 & -- & -- \\
Genus $\leq 2$ curves (Section~\ref{sec:genus})
& 13106 & 1110 & 4163 & 0 & 3125 \\
Atkin--Lehner involutions as hyperelliptic involutions (Section~\ref{sec:ALashyp})
& 11458 & 1110 & 5811 & 0 & 1648 \\
Propagate closure and isomorphism
& 10906 & 1661 & 5812 & 551 & 1 \\
Atkin--Lehner fixed points (Section~\ref{sec:fixedpointsAL})
& 2125 & 10442 & 5812 & 8781 & 0 \\
Propagate closure and isomorphism
& 2067 & 10500 & 5812 & 58 & 0 \\
Genus $3$ covers of genus $2$ curves (Section~\ref{sec:g3coverg2})
& 2051 & 10500 & 5828 & 0 & 16 \\
Propagate closure and isomorphism
& 2051 & 10500 & 5828 & 0 & 0 \\
Degeneracy morphism (Section~\ref{sec:degeneracy})
& 2032 & 10519 & 5828 & 19 & 0 \\
Propagate closure and isomorphism
& 2002 & 10549 & 5828 & 30 & 0 \\
Refined Atkin--Lehner fixed points (Section~\ref{sec:fixedpointsAL})
& 1629 & 10922 & 5828 & 373 & 0 \\
Propagate closure and isomorphism
& 1613 & 10938 & 5828 & 16 & 0 \\
Finite field point count (Section~\ref{sec:trace})
& 1053 & 11498 & 5828 & 560 & 0 \\
Propagate closure and isomorphism
& 1016 & 11535 & 5828 & 37 & 0 \\
Weil polynomials (Section~\ref{sec:Weilpolys})
& 892 & 11659 & 5828 & 124 & 0 \\
Propagate closure and isomorphism
& 881 & 11670 & 5828 & 11 & 0 \\
Modular non-Atkin--Lehner involutions (Section~\ref{sec:modularnonAL})
& 828 & 11705 & 5846 & 35 & 18 \\
Propagate closure and isomorphism
& 827 & 11706 & 5846 & 1 & 0 \\
Resolve classical modular curve cases from literature
& 784 & 11749 & 5846 & 43 & 0 \\
\hline
\end{tabularx}
\caption{Filtering all $18379$ Atkin--Lehner quotients after expansion.}
\label{tab:quotient-filtering}
\end{table}

\subsection{Discussion}
Of the 18379 candidate curves, we prove that 11749 are not hyperelliptic, 5846  curves are subhyperelliptic (of which 3423 are hyperelliptic).
For the remaining 784 candidate curves we remain unsure whether they are hyperelliptic. 
The genera of these 784 curves are (in format $g^e$ where $g$ is the genus and  $e$ is the multiplicity)
\[\{ 3^{519}, 4^{87}, 5^{163}, 6^{10}, 7^5 \}. \]
The number of non-trivial independent Atkin--Lehner involutions still acting non-trivially on the quotient $2^{\omega(DN)}/|W| - 1$ over all unknown  $\Curve{D}{N}/W$ (with multiplicity) are
\[  \{ 0^{157},\ 1^{464},\ 3^{163} \} \]
which shows that 157 are star curves (and thus minimal with respect to covering) and the remainder are also low in the covering structure. 
In fact, only $32$ of them cover another undecided curve, so their hyperellipticity largely needs to be solved individually (without relying on closure principles). 
An additional $38$ are isomorphic over $\F_p$ to another undecided curve and $21$ more are fully isomorphic to another curve which is undecided.  
In total there are $693$ curves that are isolated under the relations described in the paper (do not get resolved by resolving another curve).

For curves that were proven to be subhyperelliptic, the genera  are
\[\{0^{905},\;
1^{1518},\;
2^{1725},\;
3^{870},\;
4^{373},\;
5^{260},\;
6^{86},\;
7^{73},\;
8^{18},\;
9^{13},\;
10^{4},\;
11^{1}\}.\] 

In particular, we find that the genus $11$ curve $\Curve{770}{1} / \langle w_{10} \rangle$ is hyperelliptic, since  $\Curve{770}{1} / \langle w_{10}, w_{77} \rangle$ has genus $0$. 

In Figure \ref{fig:genus-distribution}, one can see the genus distribution of all curves under consideration and their status. In Figure \ref{fig:test-attribution} we display the test used to prove each of the 17595 resolved cases. While these tests depend on the context and order in which they are applied, it gives a sense of how effective each test is. 
For example, the one curve proved subhyperelliptic by isomorphism is $X_0(1,126)/\langle w_7, w_9 \rangle $, which is isomorphic to $X_0(1,252)/ \langle w_4, w_7, w_9 \rangle$ by Proposition \ref{prop:iso4}. We can also identify that  $V_3$ is its hyperelliptic involution, but the modular non-Atkin--Lehner involution test was run later. 

\begin{figure}[ht]
  \centering
  \begin{tikzpicture}
  \begin{axis}[
  ybar stacked,
  width=0.9\textwidth,
  height=0.50\textwidth,
  ymin=0,
  xlabel={Genus},
  ylabel={Curves},
  symbolic x coords={0,1,2,3,4,5,6,7,8,9,10,11,12,13,14,15,17,18,19,20,21,22,23},
  xtick=data,
  x tick label style={font=\scriptsize},
  legend style={at={(0.5,-0.20)}, anchor=north, legend columns=3},
  title={Genus distribution of all $18379$ quotients},
  grid=major,
  ]

  \addplot+[
    draw=black,
    line width=0.25pt,
    fill=StatusSub!85
  ] coordinates {
  (0,905) (1,1518) (2,1725) (3,870) (4,373) (5,260) (6,86) (7,73) (8,18) (9,13) (10,4) (11,1) (12,0) (13,0) (14,0) (15,0) (17,0) (18,0) (19,0) (20,0) (21,0) (22,0) (23,0) 
  };

  \addplot+[
    draw=black,
    line width=0.25pt,
    fill=StatusNot!65
  ] coordinates {
  (0,0) (1,0) (2,0) (3,792) (4,1811) (5,1972) (6,2021) (7,1896) (8,1239) (9,1062) (10,534) (11,288) (12,25) (13,32) (14,7) (15,6) (17,7) (18,3) (19,16) (20,10) (21,15) (22,11) (23,2) 
  };

  \addplot+[
    draw=black,
    line width=0.25pt,
    fill=StatusOpen!45
  ] coordinates {
  (0,0) (1,0) (2,0) (3,519) (4,87) (5,163) (6,10) (7,5) (8,0) (9,0) (10,0) (11,0) (12,0) (13,0) (14,0) (15,0) (17,0) (18,0) (19,0) (20,0) (21,0) (22,0) (23,0) 
  };

  \legend{Proved subhyperelliptic, Proved not subhyperelliptic, Open}
  \end{axis}
  \end{tikzpicture}
  \caption{Final genus distribution of all quotients, separated by status.}
  \label{fig:genus-distribution}
\end{figure}

\begin{figure}[ht]
\centering
\small
\renewcommand{\arraystretch}{1.12}

\begin{tabularx}{0.8\linewidth}{%
  >{\raggedright\arraybackslash}p{0.47\linewidth}
  r
  >{\raggedright\arraybackslash}p{0.26\linewidth}
}
\toprule
\textbf{Test} & \textbf{Count} & \textbf{Relative size} \\
\midrule
\multicolumn{3}{l}{\textbf{Proved subhyperelliptic}} \\
\addlinespace[0.2em]

Hyperelliptic Atkin--Lehner involution
  & 4466 & \testbar{StatusSub}{4466} \\

Genus $\leq 2$
  & 1330 & \testbar{StatusSub}{1330} \\

Modular non-Atkin--Lehner involution
  & 33 & \testbar{StatusSub}{33} \\

Genus 3 cover of genus 2
  & 16 & \testbar{StatusSub}{16} \\

Isomorphism
  & 1 & \testbar{StatusSub}{1} \\

\addlinespace[0.6em]
\midrule
\multicolumn{3}{l}{\textbf{Proved not subhyperelliptic}} \\
\addlinespace[0.2em]

Atkin--Lehner fixed points
  & 8781 & \testbar{StatusNot}{8781} \\

Finite field point count
  & 1375 & \testbar{StatusNot}{1375} \\

Upward closure
  & 496 & \testbar{StatusNot}{496} \\

Weil polynomial
  & 385 & \testbar{StatusNot}{385} \\

Refined Atkin--Lehner fixed points
  & 373 & \testbar{StatusNot}{373} \\

Isomorphism
  & 109 & \testbar{StatusNot}{109} \\

Special fiber isomorphism
  & 108 & \testbar{StatusNot}{108} \\

Modular non-Atkin--Lehner involution
  & 60 & \testbar{StatusNot}{60} \\

Classical modular curve (literature)
  & 43 & \testbar{StatusNot}{43} \\

Degeneracy morphism
  & 19 & \testbar{StatusNot}{19} \\

\bottomrule
\end{tabularx}

\caption{Distribution of the tests that determined the $17595$ resolved quotients. Bar lengths are log-scaled.}
\label{fig:test-attribution}
\end{figure}

\subsection{Prime bounds}
Some of the techniques above are prime-dependent. It is likely that pushing the prime bounds, for example, by computing more primes on Weil polynomials, would rule out a handful of extra curves. 
However, these computations are expensive, and unlikely to yield large gains. It is possible that many of the remaining open curves are genuinely in the isogeny class of a hyperelliptic curve, in which case methods using finite field point counts cannot rule them out.
The most expensive part of the Weil polynomials is the class number computation from point counting: we have cached class numbers for quadratic fields with discriminant bounded by $2^{40}$ from the \href{https://www.lmfdb.org/NumberField/QuadraticImaginaryClassGroups}{LMFDB} \cite{lmfdb} already.

\subsection{Special fiber method}
The isomorphism described in Proposition~\ref{prop:specialfiber} can also be used to rule out hyperellipticity in the following way, as in \cite[\S 5]{FH99}. If $X = \Curve{D}{Np}/W$ is hyperelliptic, then the hyperelliptic involution $\iota$ acts on the normalization of its special fiber, isomorphic to the special fiber of $X' = \Curve{D}{N}/W'$. Thus, there would be an element $\iota \in \Aut_{\F_p}(X')$ such that 
\begin{equation} \label{eq:special fiber case 2}
\iota(\alpha) = w_m \alpha
\end{equation}
for all properly $\F_{p^2}$-rational supersingular points $\alpha \in X_{\F_p}$, where $w_m \in W/W' \simeq \Z / 2\Z$ is a non-trivial representative. 

In cases where $\StarCurve{D}{1} \simeq \P^1$, the natural covering map $\Curve{D}{Np} \to \StarCurve{D}{1}$ allows us to calculate the supersingular points of $\Curve{D}{Np}_{\F_p}$. When  $g(X') = 0$, one can check whether there exists an element $\iota \in \Aut_{\F_p}(X')$ satisfying \eqref{eq:special fiber case 2}, and if it does not exist, $X$ is not hyperelliptic.

Similarly, when $W$ contains some $w_m$ such that $p \mid m$, the special fiber consists of two connected components, with $\iota w_p$ fixing each of them, and acting as Frobenius on the supersingular points. This could be used in a similar way to rule out hyperellipticity.

\begin{example}
Consider the curve $X = \Curve{6}{23} / \langle w_{23} \rangle$. By Proposition~\ref{prop:specialfiber}, the normalization of $X_{\F_{23}}$ is isomorphic to $\Curve{6}{1}_{\F_{23}}$. By \cite{BabaGranath}, a model of $X' = \Curve{6}{1}$ is given by the conic $x^2 + 3y^2 + z^2 = 0$ in $\P^2$. Since $(6:16:1) \in X'_{\F_{23}}$, we have $X'_{\F_{23}} \simeq \P^1_{\F_{23}}$ and $\Aut_{\F_{23}}(X') \simeq \PGL_2(\F_{23})$. The supersingular points in these coordinates are given by $(0:\pm \sqrt{-8}:1), (\pm \sqrt{-1} : 0 : 1), (\pm \sqrt{-3} : 1 : 0)$, and one verifies that no element of $\PGL_2(\F_{23})$ acts as Frobenius on these points. Therefore $X$ is not hyperelliptic. 
\end{example}

\subsection{Ceresa cycles}
We now discuss some negative results about Ceresa cycles and their shadows on Shimura curves, inspired by \cite{Ceresa} in which Lupoian and Rawson study shadows of Ceresa cycles on the modular curves $X_0(N)$. We hope these results shed some light on the situation for Shimura curves and their quotients and aid researchers in the further study.

Let $J$ be the Jacobian of $X$ and
let $\phi: J \to J$ be an endomorphism of $J$. Let $e \in X(\Q)$ be a basepoint of $X$ inducing an Abel--Jacobi map $i_e$. The Ceresa cycle associated to $(X,e)$ is the element \[[i_e(C) ] - [-1]^* [i_e(C)]  \] in $\mathrm{CH}^1(J)$.
If a curve $X$ is hyperelliptic, then the Ceresa cycle is trivial when $e$ is a Weierstrass point (in other words, $e = K_C/(2g-2)$).
\begin{definition}
The \emph{shadow} of $\phi$ is the degree 0 divisor
\[ \mathrm{Sh}(\phi) = (2g-2) F_\phi - \deg(F_\phi) K_X - \phi(K_X) - \phi^\vee(K_X) + (\deg(\phi) + \deg(\phi^\vee))(K_X)\]
where $F_\phi$ denotes the fixed points of $\phi$.
\end{definition}

In \cite[Proposition~4]{Ceresa} Lupoian and Rawson prove that if $[\mathrm{Sh}(\phi)] \in J$ is non-torsion then Ceresa is non-zero (and therefore that $X$ is not hyperelliptic).
In particular, they study the cases where $\phi = T_\ell$ or $\phi = w_d$ is a Hecke operator or an Atkin--Lehner involution.

Most of the remaining open cases in our work are genus $3$ curves. 
These curves do not cover genus 2 curves, or by  Proposition \ref{prop: genus 3 covering genus 2} they would already be known to be hyperelliptic. In this case, we show as a consequence of Proposition \ref{prop:pushforwardceresa}  that shadows are always torsion, since the Jacobian of this genus 3 curve decomposes as a product of elliptic curve, and in each factor, the shadow is torsion.

\begin{proposition}
\label{prop:pushforwardceresa}
    Let $\pi : C \to E$, where $C$ is a smooth projective and geometrically integral curve, and $E$ is an elliptic curve. Let $\phi \in \End(J_C)$, where $J_C$ is the Jacobian of $C$. Then $\pi_* \Sh(\phi)$ is torsion (in fact $\pi_* \Sh(\phi) \in E[3]$).
\end{proposition}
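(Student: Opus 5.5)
The plan is to avoid computing $\Sh(\phi)$ on $C$ directly. Instead, I would use its interpretation as the image of a canonical cycle under a correspondence, push everything forward to $E$, and use the fact that the corresponding cycle on a genus $1$ curve is trivial up to a base-point correction of order $3$.

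First I would reduce to a statement about divisor classes on $E$. Since $E$ is an elliptic curve, a degree $0$ divisor class on $E$ is determined by its sum under the group law. The push-forward $\pi_* : \mathrm{Pic}^0(C) \to \mathrm{Pic}^0(E) \simeq E$ factors as the Albanese map $J_C \to E$ induced by $\pi$. So it suffices to show that $3\,\pi_*\Sh(\phi) = 0$ in $E$. We may work over $\bar k$ throughout, since torsion-ness is geometric.

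Second, I would recall how Lupoian and Rawson \cite{Ceresa} obtain $\Sh(\phi)$. Take the Gross--Schoen modified diagonal $\Delta_e \in \CH^2(C^3)$ attached to a base point $e$. Apply the correspondence $\Gamma_\phi \subset C \times C$ on two of the factors, then push forward to one factor. The result is a degree $0$ divisor class on $C$ which, after the normalization $e = K_C/(2g-2)$ built into the formula, is $\Sh(\phi)$. I would make this precise as
\[
\Sh(\phi) = (p_3)_*\bigl( \Delta_e \cdot p_{12}^*\Gamma_\phi \bigr),
\]
possibly up to a harmless multiple, with the terms $\deg(F_\phi) K_C$, $\phi(K_C)$ and $\phi^\vee(K_C)$ arising from the partial diagonals. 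Next, apply $\pi_*$. The relevant cycle on $C^3$ maps under $\pi \times \pi \times \pi$ to the modified diagonal $\Delta_{\pi(e)}$ of $E$ intersected with $(\pi\times\pi)_*\Gamma_\phi$. The latter is a correspondence $\psi$ on $E$, that is, an element of $\End(E)$ together with divisor data of degrees $\deg\phi$ and $\deg\phi^\vee$. This turns $\pi_*\Sh(\phi)$ into an expression involving only $E$, a base point $\pi(e)$ and $\psi$.

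Third, I would exploit that $E$ has genus $1$. Its Gross--Schoen cycle vanishes for a suitable base point, because the Ceresa cycle of a hyperelliptic curve with Weierstrass base point vanishes, and on $E$ every point is a translate of such a base point. Changing the base point from $o$ to $\pi(e)$ changes $\Delta$ by explicit terms. After applying $\psi$ and pushing to a factor, these terms contribute a multiple of $\bigl(3\pi(e) - 3o\bigr)$-type classes, weighted so that the total is killed by $3$; the $3$ reflects the three factors of $E^3$. The cleanest check is to compute the shadow directly on $E$ from the paper's definition, with $K_E = 0$, and compare. I expect the main obstacle to be the bookkeeping: the normalization $e = K_C/(2g-2)$ on $C$ need not push forward to a genuine point of $E$, and the correspondence identity above must be verified term by term. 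If that becomes unwieldy, I would fall back on a direct computation. I would write $K_C = R_\pi$ by Riemann--Hurwitz, note $\pi_*\phi(K_C) = \psi_*(\pi_* R_\pi)$, and check that the combination in $\Sh(\phi)$ sums in the group law to an element killed by $3$, using that $\pi_* R_\pi$ has degree $2g-2$.
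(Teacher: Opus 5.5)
\textbf{The compatibility step fails, and the statement is false as stated.} Your plan is the paper's plan. You write $\Sh(\phi)=(2g-2)\,p_{3*}\bigl(\Gamma^3(C,e)\cdot p_{12}^*\Gamma_\phi\bigr)$ with $e=K_C/(2g-2)$ (your $\Delta_e$ is this Gross--Schoen cycle), push everything down to $E$, and use that the modified diagonal of a genus one curve is torsion. Everything rests on the claim that $(\pi\times\pi\times\pi)_*$ turns $\Gamma^3(C,e)\cdot p_{12}^*\Gamma_\phi$ into $\Gamma^3(E,\pi_*e)\cdot p_{12}^*(\pi\times\pi)_*\Gamma_\phi$. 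This is not bookkeeping: it is false, even up to a nonzero integer multiple.

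The projection formula needs one of the two factors to be pulled back from $E^3$, and neither is. Indeed, $(\pi\times\pi\times\pi)^*$ of the small diagonal of $E^3$ is $C\times_E C\times_E C$, not the small diagonal of $C^3$. Concretely, let $\delta_E\colon E\to E\times E$ be the diagonal. Then $\delta_E^*\bigl((\pi\times\pi)_*\Gamma_\phi\bigr)$ equals $\pi_*F_\phi$ plus the push-forward of $\Gamma_\phi\cdot\bigl((\pi\times\pi)^*\Delta_E-\Delta_C\bigr)$. Likewise $\bigl((\pi\times\pi)_*\Gamma_\phi\bigr)_*(\pi_*e)=\pi_*\phi_*(\pi^*\pi_*e)$, not $\pi_*\phi_*(e)$. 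Your fallback has the same defect, since $\psi_*\pi_*K_C=\pi_*\phi_*\pi^*\pi_*K_C\neq\pi_*\phi_*K_C$. It also never controls $\pi_*F_\phi$, which is exactly where the discrepancy lives.

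\textbf{Counterexample.} Let $\rho\colon C\to C'$ be a double cover with involution $a$, branched along a reduced divisor $\beta$ of degree $2r\ge 2$. Let $h=g(C')\ge 2$, and let $\pi=\bar\pi\circ\rho$ for a nonconstant $\bar\pi\colon C'\to E$. Then $\#F_a=2r$, $2F_a=\rho^*\beta$, $K_C\sim\rho^*K_{C'}+F_a$ and $2g-2=2(2h-2)+2r$. Since $a$ preserves $K_C$, the paper's definition gives
\[
\Sh(a)\sim(2g-2)F_a-2rK_C\sim\rho^*\bigl((2h-2)\beta-2rK_{C'}\bigr),\qquad \pi_*\Sh(a)\sim 2\,\bar\pi_*\bigl((2h-2)\beta-2rK_{C'}\bigr).
\]
Because $\operatorname{Pic}^0(C')$ is divisible, every reduced $\beta$ of even degree occurs as such a branch divisor. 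As $\beta$ moves, $\bar\pi_*\beta$ sweeps out $E$.

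For instance, take $C'$ bielliptic of genus $2$ over $\Qbar$ and $r=1$, so $C$ has genus $4$. Then $\pi_*\Sh(a)\sim 4(\bar\pi_*\beta-\bar\pi_*K_{C'})$, which is non-torsion for general $\beta$. On the other hand $(\pi\times\pi)_*\Gamma_a=2\deg(\bar\pi)\,\Delta_E$, whose shadow on $E$ is $0$. So the identity you need fails.

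The paper's printed proof asserts the same compatibility, as the middle square of its commutative diagram, so it has the same gap. What the push-forward argument does prove is the case where $\Gamma_\phi$ is pulled back from $E\times E$, e.g.\ $\phi=\pi^*\psi\pi_*$ with $\psi\in\End(E)$. There the projection formula applies, together with $(\pi\times\pi\times\pi)_*\Gamma^3(C,e)=\deg(\pi)\,\Gamma^3(E,\pi_*e)$.
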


\begin{proof}
    We briefly recall the setup for the shadow. Denote by $\Delta_C$ the diagonal in $C^2$, by $\Delta_C^3$ the small diagonal in $C^3$, by $\pr_i$ the projection onto the $i$-th factor, and by $\pr_{i,j}$ the projection onto the product of $i,j$ factors. Let $e = \frac{K_C}{2g-2}$, where $g$ is the genus of $C$, and $K_C$ its canonical divisor. Define
    $\Gamma^3(C,e)$ to be 
    $$
    \Delta_C^3 - \pr_{1,2}^*(\Delta_C) \cdot \pr_3^* e
    - \pr_{1,3}^*(\Delta_C) \cdot \pr_2^* e - \pr_{2,3}^*(\Delta_C) \cdot \pr_1^* e + \pr_{1,2}^*(e \times e) + \pr_{1,3}^*(e \times e) + \pr_{2,3}^*(e \times e).
    $$
    Writing $X_{\phi}$ for the correspondence induced by $\phi$ on $C^2$, we then have $\Sh(\phi) = (2g-2) S(X_{\phi})$, where $S_C$ is given by
    $$
    S_{C,e} : \CH_1(C^2) \xrightarrow{\pr_{1,2}^*} \CH_1(C^3)
    \xrightarrow{\cdot \Gamma^3(C,e)} \CH_1(C^3)
    \xrightarrow{\pr_{3,*}} \CH_1(C).
    $$
    The covering $\pi$ induces a commutative diagram
    $$
    \xymatrix{
    \CH_1(C^2) \ar[r]^{\pr_{1,2}^*} \ar[d]^{\pi_*} & \CH_1(C^3) \ar[r]^{\cdot \Gamma^3(C,e)} \ar[d]^{\pi_*} 
    & \CH_1(C^3) \ar[r]^{\pr_{3,*}} \ar[d]^{\pi_*}  & \CH_1(C) \ar[d]^{\pi_*}  \\
    \CH_1(E^2) \ar[r]^{\pr_{1,2}^*} & \CH_1(E^3) \ar[r]^{\cdot \Gamma^3(E,\pi_* e)} & \CH_1(E^3) \ar[r]^{\pr_{3,*}}
    & \CH_1(E).
    }
    $$
    However, note that since $\pi_*e$ is a degree 1 divisor on the elliptic curve $E$, by \cite[Corollary 4.7]{GS95}, we have $6\Gamma^3(E, \pi_* e) = 0$, hence $6S_{E,\pi_{*} e} = 0$, and in particular 
    \begin{equation*}
    3 \pi_* \Sh(\phi) = 3 (2g-2) \pi_* S_{C,e}(X_{\phi})
    = (g-1) \cdot 6S_{E, \pi_* e} (\pi_* X_{\phi}) = 0. \qedhere
    \end{equation*}
\end{proof}

\begin{corollary}
If $C$ is a Shimura curve such that the Jacobian of $C$ factors as a product of elliptic curves, then $\Sh(\phi)$ is torsion.
\end{corollary}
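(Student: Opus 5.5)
The plan is to reduce to Proposition~\ref{prop:pushforwardceresa} applied to each elliptic factor of $J_C$. Write $J_C \sim E_1 \times \cdots \times E_g$ over $\Q$, where $g$ is the genus of $C$. By Proposition~\ref{prop:decomposition}, the factors are the $A_f$ with $f$ of rational coefficients. For each $i$ we have a nonconstant map $\pi_i : C \to E_i$. To obtain it, compose the Abel--Jacobi map $i_e : C \to J_C$ with the projection $J_C \to E_i$ given by the isogeny. If the base point $e$ is not rational, we work over $\Qbar$ or translate; this does not affect torsion questions. Each $\pi_i$ is nonconstant because $i_e(C)$ generates $J_C$. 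By Proposition~\ref{prop:pushforwardceresa}, each $\pi_{i,*}\Sh(\phi)$ lies in $E_i[3]$.

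Next I would relate $\pi_{i,*}$ on divisor classes to the homomorphism $\pi_{i,*} : J_C \to E_i$. By Albanese functoriality, the map induced by $\pi_i$ on degree-$0$ classes is, up to translation, the composite $J_C \to E_i$ used above. Hence the map
\[
\Pi = (\pi_{1,*}, \dots, \pi_{g,*}) : J_C \to E_1 \times \cdots \times E_g
\]
is an isogeny, say of degree $n$ with kernel killed by $n$. Since $\Pi([\Sh(\phi)])$ is killed by $3$, the class $[\Sh(\phi)]$ is killed by $3n$, and so it is torsion.

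The main obstacle is ensuring the chosen $\pi_i$ jointly give an isogeny rather than merely nonzero maps. Choosing them as the Albanese compositions of a fixed isogeny $J_C \to \prod E_i$ settles this, since pushforward of degree-$0$ divisors along $C \to E_i$ agrees with that homomorphism by the universal property of the Albanese. A second, minor point is the field of definition: Proposition~\ref{prop:pushforwardceresa} is geometric, so I would pass to $\Qbar$ throughout, where torsion is unchanged.
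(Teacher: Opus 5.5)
Your proposal is correct and follows the paper's proof. Like the paper, you compose an Abel--Jacobi map with the projections $J_C \to E_i$ to get $\pi_i : C \to E_i$, apply Proposition~\ref{prop:pushforwardceresa} to each, and transfer torsion back through the isogeny; your Albanese-functoriality check that $(\pi_{1,*},\dots,\pi_{g,*})$ is exactly that isogeny (so $[\Sh(\phi)]$ is killed by $3n$) just makes explicit the step the paper states as ``torsion in $\prod_i E_i$ and thus in $J$.''
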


\begin{proof}
Let $J$ be the Jacobian of $C$ and suppose $J \sim \prod_i E_i$ is the isogeny decomposition. Then the composition of an Abel--Jacobi map $C \to J$ with a projection $J \to E_i$ yields a morphism $\pi_i :C \to E_i$.
Thus by Proposition \ref{prop:pushforwardceresa},  $\pi_{i*} \Sh(\phi)$ is torsion. So $\Sh(\phi)$ is torsion in $\prod_i E_i$ and thus in $J$.
\end{proof}

We end with some speculative computations about shadows of Hecke operators on Shimura curves.
Let $\phi = T_\ell$ for $\ell$ prime not dividing $DN$.
Then  $T_\ell = T_\ell^\vee$ and $\deg(T_\ell) = \ell+1$ so 
\begin{equation} \Sh(\phi) = (2g -2) F_{T_\ell} - \deg(F_{T_\ell}) K_X - 2 T_\ell(K_X) + 2 (\ell+1) K_X.
\end{equation}

\begin{proposition}
The set of fixed points $F_{T_\ell}$ is in bijection with the set of QM abelian surfaces $A$ such that there exists an imaginary quadratic order $R$ with \[R \hookrightarrow \Order_0(N) \hookrightarrow\End(A)\] and $R$ has an element of norm $\ell$.
\end{proposition}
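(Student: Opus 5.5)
The plan is to use the moduli interpretation of $T_\ell$ as a correspondence on $\Curve{D}{N}$. A point of $\Curve{D}{N}$ is a pair $(A,\iota)$ with $\iota\colon \Order = \Order_0(N) \hookrightarrow \End(A)$, and $T_\ell(A,\iota)$ is the sum of the quotients $(A/C,\iota_C)$. Here $C$ runs over the $\ell+1$ subgroups of $A[\ell]$ of order $\ell^2$ that are stable under $\Order$. For $\ell \nmid DN$ we have $\Order\otimes \Z_\ell \simeq M_2(\Z_\ell)$, so such $C$ correspond to lines in $\F_\ell^2$, which gives the count $\ell+1$. A fixed point of $T_\ell$ means a point $x$ on the graph $X_{T_\ell} \subset X^2$ meeting the diagonal, i.e. $x$ appears among its own Hecke translates: there is such a $C$ and an isomorphism of QM surfaces $(A/C,\iota_C) \simeq (A,\iota)$.

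First I would show that a fixed point gives an order $R$. Composing the projection $A \to A/C$ with such an isomorphism gives an isogeny $\alpha\colon A\to A$ of degree $\ell^2$. It commutes with $\iota(\Order)$, since both maps are $\Order$-equivariant. Hence $\alpha$ lies in the centralizer $\End_{\Order}(A)$, which is an order in an imaginary quadratic field $K$. That forces $A$ to be CM, and $\alpha$ has reduced norm $\ell$, since its degree as an endomorphism of the abelian surface is $\nrd(\alpha)^2 = \ell^2$. Set $R = \End_{\Order}(A)$. Then $R$ contains an element of norm $\ell$, and the commuting actions of $R$ and $\Order$ give the stated embedding $R \hookrightarrow \Order_0(N)$, compatibly with $\End(A)$. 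The natural choice is to identify $R$ with an optimal embedding into $\Order$, using the standard fact that CM points on $\Curve{D}{N}$ correspond to optimal embeddings as in Proposition~\ref{prop:localembed}.

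Conversely, suppose $R\hookrightarrow \Order \hookrightarrow \End(A)$ and $\alpha\in R$ has norm $\ell$. Then $\ker\alpha$ has order $\ell^2$ and is $\Order$-stable, because $\alpha$ commutes with $\Order$. Also $A/\ker\alpha \simeq A$ via $\alpha$, compatibly with $\iota$, so $(A,\iota)$ is a fixed point. Checking that the two constructions are mutually inverse then gives the bijection.

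I expect the main obstacle to be bookkeeping rather than existence. The bijection is meant at the level of sets, so I need to decide how multiplicities are handled when several $C$ work, or when $\pm\alpha$ and $\alpha, \bar\alpha$ give the same point. I also need to be careful about the identification of $\End_{\Order}(A)$ with an imaginary quadratic order that embeds into $\Order$ itself and not merely into $B$. The key fact here is that the commutant of $\Order$ in $\End^0(A) \simeq M_2(K)$ is $K$, and for the second point one uses Eichler's theory of optimal embeddings. Finally, I would check that elliptic points cause no issue.
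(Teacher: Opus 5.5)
Your proposal is correct and follows the same route as the paper. In both, a fixed point of $T_\ell$ yields an $\Order$-equivariant self-isogeny of degree $\ell^2$, i.e.\ an element of norm $\ell$ in the CM order $\End_{\Order}(A)$. The paper's proof stops at this forward direction, so your converse via $\ker\alpha$ and your identification of $\End_{\Order}(A)$ with an optimally embedded order in $\Order$ only add detail the paper omits. That identification follows most directly from $H_1(A,\Z)\simeq\Order$ and right multiplication, rather than from Eichler's counting formula.
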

\begin{proof}
Suppose $(A, \iota)$ is a fixed point of $T_{\ell}$. Then there exists a $(1, \ell)$-isogeny $\varphi : A \to A$ such that $\iota \circ \varphi = \varphi \circ \iota$. In particular, $A$ has an endomorphism of norm $\ell$.  
\end{proof}

\begin{proposition}
\label{prop:mult}
    Let $A$ be an abelian surface with QM by $\Order$ such that $R_n $, a quadratic order of discriminant $n$ and conductor $f$, embeds into $\calO$. Let $\ell \nmid DN$ be a prime. Then there are $h_{\ell}$ horizontal $(1,\ell)$-isogenies ($R_n \to R_n$), $a_{\ell}$ ascending $(1,\ell)$-isogenies ($R_n \to R_{n / \ell^2}$), and $d_{\ell}$ descending $(1,\ell)$ isogenies ($R_n \to R_{n \ell^2})$, where
    $$
    h_{\ell} = \begin{cases}
        0 & \ell \mid f \\
        1 + \left(\frac{n}{\ell}\right) & \ell \nmid f
    \end{cases},\quad
    a_{\ell} = \begin{cases}
        1 & \ell \mid f \\
        0 & \ell \nmid f
    \end{cases}, \quad 
    d_{\ell} = \begin{cases}
        \ell & \ell \mid f \\
        \ell - \left(\frac{n}{\ell}\right) & \ell \nmid f
    \end{cases}
    $$
\end{proposition}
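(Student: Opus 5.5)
The plan is to reduce the count of $(1,\ell)$-isogenies out of $A$ to a local count at $\ell$, and then carry out the familiar isogeny-volcano computation for orders in $K = R_n \otimes \Q$.

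\textbf{Step 1: translate isogenies into lattices.} Since $\ell \nmid DN$, we have $\Order_\ell \simeq M_2(\Z_\ell)$. The $(1,\ell)$-isogenies $A \to A'$ that respect the QM structure correspond to $\Order$-stable subgroups of $A[\ell]$ of order $\ell^2$. Via the idempotent decomposition of $\Order/\ell\Order \simeq M_2(\F_\ell)$, these correspond to lines in a $2$-dimensional $\F_\ell$-space $V = eA[\ell]$, so there are $\ell+1$ of them in total. Note that the formulas are consistent with this total: $h_\ell + a_\ell + d_\ell = \ell + 1$ in both cases.

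\textbf{Step 2: identify the action of $R_n$ on $V$.} The image of $R_n$ in $\End(A)$ commutes with $\Order$ only after choosing the embedding appropriately. The cleanest route is to use the fact that the QM-isogeny class is governed by lattices in $K_\ell = R_n \otimes \Q_\ell$ acting on the $2$-dimensional $\Q_\ell$-vector space $e T_\ell A \otimes \Q$, which is a free rank-one $K_\ell$-module. Under this identification, the lattice $L = eT_\ell A$ has multiplier ring $R_{n,\ell} = \Z_\ell + f\calO_{K,\ell}$. Indeed, the optimality of $R_n \hookrightarrow \End(A)$ at $\ell$ means precisely that $\{x \in K_\ell : xL \subseteq L\} = R_{n,\ell}$.

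\textbf{Step 3: count sublattices by multiplier ring.} The index-$\ell$ sublattices $L' \subset L$ are the lines in $L/\ell L$. The endomorphism ring of $A'$ at $\ell$ is the multiplier ring of $L'$, and away from $\ell$ it is unchanged. This reduces the statement to the standard local computation.

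\emph{Case $\ell \mid f$.} Then $L \simeq R_{n,\ell}$ and $L/\ell L = R/\ell R$, where $R/\ell R \simeq \F_\ell[\epsilon]/(\epsilon^2)$ with $\epsilon$ the image of $f\omega$. Exactly one line is $R$-stable, namely $\epsilon\cdot$, which equals $\ell R' $ for $R' = \Z_\ell + (f/\ell)\calO_K$. That sublattice has multiplier ring $R'$, so it is ascending. The remaining $\ell$ lines are not $R$-stable, and their multiplier ring is $\Z_\ell + \ell R$, so they are descending. This gives $a_\ell = 1$, $h_\ell = 0$, $d_\ell = \ell$.

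\emph{Case $\ell \nmid f$.} Then $R_{n,\ell}$ is maximal and $R/\ell R$ is $\F_{\ell^2}$, $\F_\ell \times \F_\ell$, or $\F_\ell[\epsilon]$ according as $\left(\frac{n}{\ell}\right) = -1, 1, 0$. The $R$-stable lines number $1 + \left(\frac{n}{\ell}\right)$; these are horizontal, being ideals of norm $\ell$. All other lines are descending. No line is ascending, since the order is already maximal at $\ell$.

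\textbf{Step 4: return to global orders.} The global order of $A'$ is $R_{n\ell^2}$, $R_n$, or $R_{n/\ell^2}$ according to its localization, because the orders agree away from $\ell$.

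\textbf{Main obstacle.} The hardest part is Step 2. One must justify that the $(1,\ell)$-isogenies commuting with $\Order$ correspond exactly to lines in $e(T_\ell A/\ell)$, and that optimality of the embedding into $\Order$ translates into the multiplier-ring condition on $eT_\ell A$. To settle this, I would first try using Morita equivalence for $M_2(\Z_\ell)$-modules to reduce to rank-two $\Z_\ell$-lattices, and then argue as in the elliptic-curve (Kohel) case.
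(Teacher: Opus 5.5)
Your argument is correct, but it does not follow the paper's proof, because the paper gives no argument of its own. It cites Kohel and Sutherland for $D=1$ and Saia's Lemmas 4.2--4.3 for maximal $\Order$, and then asserts that since $\ell \nmid DN$ the maximal-order proof ``works verbatim'' for Eichler orders.

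Your route is self-contained. You use only that $\Order_\ell \simeq M_2(\Z_\ell)$ and pass via the idempotent $e$ (Morita) to the rank-two lattice $L = eT_\ell A$ in $K_\ell$. You then run the classical local volcano count on lines in $L/\ell L$. Your case computations are right:
\begin{itemize}
\item the unique $R$-stable line $\F_\ell\epsilon$ gives $\ell R'$;
\item the other $\ell$ lines give multiplier ring $\Z_\ell+\ell R$;
\item when $\ell\nmid f$, the $1+\left(\frac{n}{\ell}\right)$ invertible ideals of norm $\ell$ are the horizontal ones.
\end{itemize}
Your version actually justifies the paper's ``verbatim'' remark, since it shows that $D$ and $N$ never enter once one localizes at $\ell \nmid DN$. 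The citation buys only brevity.

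Two points in Step 2 should be tightened.
\begin{itemize}
\item The CM order here must be the centralizer $\End_{\Order}(A)$, which is $\simeq R_n$ for the CM point attached to the optimal embedding $R_n \hookrightarrow \Order$. It is not the image of $R_n$ inside $\iota(\Order)$, which does not commute with $\Order$; your phrase about ``choosing the embedding appropriately'' should be replaced by this.
\item The identification of the multiplier ring of $eT_\ell A$ with $R_n\otimes\Z_\ell$ needs a reason. It follows from the elementary fact that $\End(A)\otimes\Z_\ell$ is saturated in $\End_{\Z_\ell}(T_\ell A)$ (an endomorphism killing $A[\ell]$ factors through $[\ell]$). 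Combine this with Morita equivalence for endomorphisms commuting with $M_2(\Z_\ell)$.
\end{itemize}
With these two remarks, the ``main obstacle'' you flagged is resolved.
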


\begin{proof}
For $D = 1$, this is \cite[\S 2.9, Lemma 6, Remark 8]{Sutherland} and \cite[Proposition 23]{Kohel}. 
For $D > 1$ and maximal $\Order$, this is \cite[Lemma 4.2, Lemma 4.3]{Saia}. However, since $\ell \nmid DN$, the proof works verbatim for an Eichler order.
\end{proof}

\begin{example}
In the case $\ell = 2$, and $(\ell, DN) = 1$  we have the following.

Write $D_n$ for the CM points corresponding to the order of discriminant $n$. Then 
\begin{equation}
\label{eq:FT2} F_{T_2} =   D_{-4} + D_{-8} + 2D_{-7}
\end{equation}

To compute this, consider the orders with discriminant $\Delta = t^2 - 4 \ell < 0$, where $t \in \Z$ and $\ell = 2$. This gives $\Delta = -4, -7, -8$.
 We look only at the horizontal isogenies, since these are maximal orders, to determine the points on the diagonal. The formula in Proposition \ref{prop:mult} gives the multiplicities.
\end{example}

In order to compute the shadow of a Hecke operator we need to be able to compute $T_\ell(D_{n})$. The following proposition addresses this.

\begin{proposition}
\label{prop:isogenies}
Let $R_n$ be the order of discriminant $n$ and $f$ the conductor of $R_n$. Let $D_n$ be  the divisor of CM points corresponding to the order of discriminant $n$, and let $\reldeg{n}{\ell} = \deg(D_n) / \deg(D_{n\ell^2})$. Then
\[ T_\ell(D_{n}) =  \begin{cases} 
2 D_n + (\ell - 1) \reldeg{n}{\ell} D_{n \ell^2} & \text{ if } \ell \text{ splits in } R_n\\
(\ell + 1) \reldeg{n}{\ell} D_{n \ell^2} & \text{ if } \ell \text{ is inert in } R_n \\
 \reldeg{n/\ell^2}{\ell}^{-1} D_{n/\ell^2} +  \ell \reldeg{n}{\ell} D_{n \ell^2} & \text{ if } \ell | f \\
  D_{n} +  \ell \reldeg{n}{\ell}   D_{n \ell^2} & \text{ if } \ell \text{ is ramified in } R_n, \ell \notdiv f 
\end{cases} \]
\end{proposition}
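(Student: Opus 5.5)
We compute $T_\ell(D_n)$ pointwise and then sum over the points of $D_n$. For a single CM point $x=(A,\iota)$ with $\End$ of the QM structure equal to $R_n$, we have $T_\ell(x)=\sum_{\varphi} (A/\ker\varphi,\iota')$. Here $\varphi$ runs over the $\ell+1$ cyclic $(1,\ell)$-isogenies out of $A$ compatible with $\iota$. These correspond to the $\ell+1$ lines in $\Order/\ell\Order\cong M_2(\F_\ell)$-stable subgroups, i.e.\ to $\P^1(\F_\ell)$, since $\ell\nmid DN$.

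Proposition~\ref{prop:mult} sorts these $\ell+1$ isogenies by the endomorphism order of the target: $h_\ell$ horizontal, $a_\ell$ ascending and $d_\ell$ descending, with $h_\ell+a_\ell+d_\ell=\ell+1$. So the plan is to insert these counts case by case.

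\begin{itemize}
\item If $\ell$ splits and $\ell\nmid f$, then $h_\ell=2$, $d_\ell=\ell-1$ and $a_\ell=0$.
\item If $\ell$ is inert, then $h_\ell=0$ and $d_\ell=\ell+1$.
\item If $\ell$ is ramified with $\ell\nmid f$, then $h_\ell=1$ and $d_\ell=\ell$.
\item If $\ell\mid f$, then $h_\ell=0$, $a_\ell=1$ and $d_\ell=\ell$.
\end{itemize}

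The horizontal part must then be identified with a multiple of $D_n$. In the split case the two horizontal isogenies correspond to the two primes above $\ell$. Summing over the points of $D_n$ (a torsor under $\mathrm{Pic}(R_n)$, up to the finite orbit data from the Atkin--Lehner and local embedding structure), the action of $[\mathfrak l]$ and $[\bar{\mathfrak l}]$ permutes the support of $D_n$. This gives $2D_n$, and similarly $D_n$ in the ramified case.

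The descending and ascending parts require a counting argument. Each point $y$ of $D_{n\ell^2}$ has exactly one ascending isogeny, to a point of $D_n$, because $\ell$ divides the conductor of $R_{n\ell^2}$ and $a_\ell=1$ there. Hence the map $D_{n\ell^2}\to D_n$ sending $y$ to its ascending target is surjective with fibres of size $\reldeg{n}{\ell}^{-1}$. By symmetry under the $\mathrm{Pic}$ actions, each fibre has the same size $\deg D_{n\ell^2}/\deg D_n$.

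Dual isogenies identify descending isogenies from $x\in D_n$ with ascending isogenies into $x$. Summing over $D_n$ therefore gives each point of $D_{n\ell^2}$ a total multiplicity of $d_\ell\cdot\deg D_n/\deg D_{n\ell^2}$. Averaging uniformly, using the transitivity of the class group action on the fibres, turns $d_\ell$ descending isogenies per point into $d_\ell\,\reldeg{n}{\ell}D_{n\ell^2}$. The ascending term in the case $\ell\mid f$ gives $\reldeg{n/\ell^2}{\ell}^{-1}D_{n/\ell^2}$ by the same fibre count.

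The main obstacle is making this uniformity rigorous. We need that $D_n$ and $D_{n\ell^2}$ are single orbits, or unions of orbits of equal size, under a group compatible with $T_\ell$. We also need to handle the unit index $[R_n^\times:R_{n\ell^2}^\times]$, which matters for $n=-3,-4$. Finally, we must interpret the divisor coefficients so that they are integers where needed. I would first prove the statement at the level of optimal embeddings, using Proposition~\ref{prop:localembed}: the local factors away from $\ell$ are unchanged, and at $\ell$ everything is the $D=1$ computation of \cite{Kohel,Sutherland}. The degree ratios then come out as the ratio of class numbers times unit indices.
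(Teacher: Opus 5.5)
Your proposal is correct and follows the paper's route: the paper's proof just reads off the horizontal, ascending and descending counts $h_\ell, a_\ell, d_\ell$ from Proposition~\ref{prop:mult} and sums over the points of $D_n$. Your equidistribution step is exactly what the paper leaves implicit: the descending targets of $x$ lie in the fibre of the ascending map over $x$, and $T_\ell$ commutes with the kernel of $\mathrm{Pic}(R_{n\ell^2})\to\mathrm{Pic}(R_n)$, which fixes $x$ and acts transitively on that fibre. That step also removes the unit-index worry, because the dual-isogeny correspondence is a bijection only up to $[R_n^\times:R_{n\ell^2}^\times]$, whereas you only need the total mass $d_\ell\deg D_n$ and uniformity.
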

\begin{proof}
The operator $T_\ell$ sends a QM abelian surface $A$ in $D_n$ to the sum of the QM abelian surfaces that are $(1,\ell)$-isogenous. Based on the splitting behavior of $\ell$ in $R_n$, we can count the number of ascending,  descending, and  horizontal isogenies, from Proposition \ref{prop:mult}.
\end{proof}

\begin{example}
Consider the genus 5 curve $X = \Curve{15}{ 23}/\langle w_3, w_{23}\rangle$ with $T_2$. (It turns out that curve can be proven to be non-hyperelliptic using Proposition \ref{prop:fixedpointsAL}, but we will use this to illustrate the computations anyway.)

Using Proposition \ref{prop:localembed} to count optimal embeddings, we have that  $F_{T_2} = 2D_{-7}$ (the other CM orders in equation \eqref{eq:FT2} do not occur).
Consider the map  
$X \to X / \langle w_5 \rangle$. The latter is an elliptic curve. Using Proposition \ref{prop:fixedpointsAL} to compute the ramification, we find 
$K_X = D_{-15} + D_{-60} + D_{-115} + D_{-460} + D_{-1380}$ (with degrees $1, 1, 1, 3, 2$).
Using Proposition \ref{prop:isogenies}, we compute that
\begin{align*}
T_2(D_{-15}) &= 2D_{-15} + D_{-60}, &
T_2(D_{-60}) &= D_{-15} + D_{-240}, &
T_2(D_{-115}) = D_{-460},\\
T_2(D_{-460}) &= 3D_{-115} + D_{-1840}, &
T_2(D_{-1380}) &= D_{-1380} + D_{-5520}, &
\end{align*}
with divisors of degrees
\begin{align*}
\deg(D_{-15}) &= 1, & \deg(D_{-60}) &= 1, & \deg(D_{-115}) &= 1, &
\deg(D_{-240}) &= 2, \\
\deg(D_{-1380}) &= 2, &
\deg(D_{-460}) &= 3, & \deg(D_{-5520}) &= 4,  & \deg(D_{-1840}) &= 6.
\end{align*}
Therefore, we obtain
$$
T_2(K_X) = 3D_{-15} + D_{-60} + 3D_{-115} + D_{-240} + D_{-460} + D_{-1380} + D_{-1840} + D_{-5520},
$$
and it follows that we can express the shadow as the sum
\begin{align*}
\Sh(T_2) &= 16D_{-7} + 2K_X - 2T_2(K_X) \\
&= 16D_{-7} - 4D_{-15} -4D_{-115} - 2D_{-240} -2D_{-1840} - 2D_{-5520}.
\end{align*}
The basic problem is to decide if this divisor is non-torsion or not. 
\end{example}

\section{Models of low genus Atkin--Lehner quotients}
\label{sec:models}

In \cite{OanaFreddy} the authors study Atkin--Lehner quotients of Shimura curves of genus $0$, $1$, and $2$. 
They provide models for genus $1$ and $2$ curves in many cases, and often  determine if a rational point exists on these curves. 

In some cases the authors are unable to determine models, for example if they are unable to determine if a rational point exists on the curve in genus 0 and 1 \cite[Tables 1,6,7]{OanaFreddy} or if they are unable to determine if the curve is bielliptic in genus 2 \cite[Table 10]{OanaFreddy}. In the bielliptic case, they are also sometimes unable to decide between several models \cite[Remark 8.5]{OanaFreddy}.

\subsection{Equations of hyperelliptic Shimura curves}
The method of \cite{GY17} can be used to address some of these ambiguities.
Their idea is to use the exceptional isomorphism to $\Orth(2,1)$, and regard the Shimura curve as a Shimura variety for the orthogonal group, therefore amenable to construction of weakly holomorphic modular forms with prescribed divisors. Using Schofer's formula \cite{Schofer09}, one can then evaluate these functions at CM points. When a curve exhibits a double cover to $\P^1$, one can construct forms supported on the ramification divisors and by solving linear and quadratic equations arising from evaluation at CM points and then reconstruct equations for the curve. 

\subsection{Current limitations}
\label{sec:limits}
We have implemented the described algorithm, but ran into two difficulties. The first is a theoretical gap -- the evaluation of Schofer's formula at non-fundamental CM discriminants when the weight of the Borcherds form is not $0$ needs some modification. However, the correction term does not appear in the literature.

The second is in the implementation aspect. The proof relies on computing spaces of weakly holomorphic modular forms of level roughly $DN$ by spanning them with eta quotients. Determining the possible eta quotients is done via integer programming. For large values of $D$ and $N$, determining the possible eta quotients and narrowing this to a basis of eta quotients of sufficient precision around both $0$ and $\infty$ requires a massive investment of resources.

\subsection{Applications}

We give some examples to illustrate how our implementation can nonetheless be applied to missing cases in the aforementioned work. 
These examples are not exhaustive, but  give a flavor of what the code can do.  However, due to the limitations discussed in the previous section, the implementation would need serious improvements before being able to produce models for the majority of the missing cases.

\begin{proposition} \label{prop:rational genus 0}
For $(m,d) \in \{ (2,5), (5,7), (10,14) \}$, we have
\begin{equation}
    \Curve{10}{7}/\langle w_{m}, w_{d}\rangle \simeq \P^1_{\Q}.
\end{equation}
In particular, these genus $0$ curves all admit rational points.
\end{proposition}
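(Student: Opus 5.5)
First I check the genus. By the genus formula of Section~\ref{sec:genus}, together with the fixed point counts of Theorem~\ref{thm:ALfixedpoints}, each quotient $X_{m,d} := \Curve{10}{7}/\langle w_m, w_d\rangle$ has genus $0$. Concretely, $g(10,7)=1+\tfrac{1}{12}\cdot 1\cdot 4\cdot 8 - \nu_4/4-\nu_3/3$, and I compute the numbers $f_m$, $f_d$, $f_{md}$ via Proposition~\ref{prop:localembed} and Ogg's local factors. Since the curve is a conic over $\Q$, it is isomorphic to $\P^1_\Q$ if and only if it has a rational point, or equivalently a rational divisor of odd degree. The proof therefore reduces to exhibiting one rational point, or a rational point of odd degree on the curve. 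The obstruction can only come from the places $2,5,7,\infty$, since the local conditions at $2$ and $5$ (from $D$) and at $7$ (from $N$) are where conics can fail to have points.

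My first attempt is to use CM points. A fixed point of an Atkin--Lehner involution $w_e\in\ALfull{10}{7}$ not in $W=\langle w_m,w_d\rangle$ maps to a point of $X_{m,d}$ whose field of definition is controlled by the class group of $\Z[\sqrt{-e}]$ or $\Z[\tfrac{1+\sqrt{-e}}{2}]$. More directly, the CM points by orders $R$ with $h(R)=1$, for which $R$ embeds optimally and whose field $K$ is such that the elements of $W$ together with complex conjugation identify the conjugate points, descend to rational points. I will list the class number one discriminants, such as $-3,-4,-7,-8,-11,-19,-40\ldots$ (and $-20,-35,-40,-70$ for the relevant $w_e$ fixed points). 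For each, I compute $\nu_p$ at $p=2,5,7$ to see which embed into $\Order$. I then use Shimura reciprocity / the Jordan description of the field of definition: for $D>1$ a CM point by $R$ on $\Curve{D}{N}$ is defined over the ring class field, and its image on the quotient by $w_{e}$ is defined over $\Q$ exactly when the Galois conjugation by $\Gal(H/\Q)$, combined with the action of $w_{D}$-type elements on complex conjugation, lands in $W$. The rule I will use is that complex conjugation acts on CM points of $\Curve{D}{N}$ as $w_{m'}$ for a specific $m'$ determined by the local invariants of $K$ at primes dividing $DN$. Rationality on $X_{m,d}$ then needs $w_{m'}\in W$.

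If this does not immediately produce a rational point, I follow the paper's fallback, the Borcherds product / Schofer method of \cite{GY17} described above. I construct a Hauptmodul $t$ on $X_{m,d}$ as a Borcherds lift with divisor supported on two CM divisors of degree $1$. Its values at CM points computed by Schofer's formula are rational numbers, and this gives an explicit rational parametrization. A single Borcherds form whose divisor is $P-Q$, with $P,Q$ CM divisors of odd degree, already shows that the conic has a rational divisor of odd degree, hence $X_{m,d}\simeq\P^1_\Q$.

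The main obstacle will be the rationality step: correctly identifying the Galois action on CM points in the presence of the level $7$ and the quotient group. With the Borcherds approach, the corresponding difficulty is avoiding non-fundamental discriminants where Schofer's formula needs correction, as noted in Section~\ref{sec:limits}. I will choose fundamental discriminants, for instance $-7$, $-8$, $-20$, $-40$, to stay clear of that difficulty.
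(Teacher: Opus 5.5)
Your proposal never produces a rational point or a rational divisor of odd degree on $X_{m,d}$, and that is the entire content of the statement. The class-number-one route fails here. The only class-number-one discriminants with CM points on $\Curve{10}{7}$ are $-3$ and $-27$. For $-4,-7,-8,-11,-19,\dots$ some $\nu_p$ with $p\in\{2,5,7\}$ vanishes; for example, $2$ splits in $\Q(\sqrt{-7})$ and $7$ is inert in $\Q(\sqrt{-2})$. Note also that $h(-40)=2$, and $-70$ is not a discriminant. Each of the two surviving CM divisors has $8$ points, which $\ALfull{10}{7}$ permutes simply transitively via the local orientations at $2,5,7$. So its image on $X_{m,d}$ has degree $2$. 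Complex conjugation replaces an optimal embedding $\phi$ by $\phi\circ c$, which reverses all three orientations; that is, it acts as $w_{70}$. Since $w_{70}$ lies in none of your three groups, you only get a pair of points conjugate over $\Q(\sqrt{-3})$. The Borcherds fallback is circular. A Hauptmodul on $X_{m,d}$ defined over $\Q$ exists only if $X_{m,d}\simeq\P^1_\Q$. A form with divisor $P-Q$ adds nothing beyond the claim that the CM divisor $P$ has odd degree on $X_{m,d}$, and you never verify that.

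The missing idea is to use the full fixed locus of an Atkin--Lehner involution \emph{outside} $W$. Since $g(\Curve{10}{7})=1$, Riemann--Hurwitz forces $f_e\in\{0,4\}$. Ogg's formula then gives $f_2=f_7=f_{14}=0$ and $f_5=f_{10}=f_{35}=f_{70}=4$. The fixed points of $w_5,w_{10},w_{35},w_{70}$ are the CM points of discriminants $-20,-40,-35,-280$ respectively, so these loci are pairwise disjoint. Take $w_e\notin W$ with $f_e=4$: $e=70$ works in all three cases, or $e=35,10,5$ respectively. No nontrivial element of $W$ fixes a point of $\mathrm{Fix}(w_e)$, so $W$, of order $4$, acts simply transitively on this $4$-element set. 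Since $w_e$ is defined over $\Q$, $\mathrm{Fix}(w_e)$ is Galois-stable. Hence its image in $X_{m,d}$ is a single Galois-invariant point, i.e.\ a rational point. These are exactly the points with $y=0$ in the paper's models, whose quadratics all split over $\Q$: $-\tfrac{1}{64}(4x+1)(27x+5)$, $-(27x+5)(x-1)$ and $-\tfrac{1}{64}x(27x+5)$. The paper obtains them instead by computing the conics explicitly with Borcherds products and Schofer's formula.
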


\begin{proof}
The models for the three curves are (in order)  
   $ y^2 = -27/16x^2 -47/64x-5/64$, 
 $ y^2 = -27x^2 + 22x+ 5$,
and $y^2 = -27/64x^2 - 5/64x $
where $x$ is a Hauptmodul for the star curve $\StarCurve{10}{7}$.
All three conics have rational points.
\end{proof}

\begin{proposition}
The models in Table \ref{modelsgen2} and \ref{modelsbielliptic} are models for the listed genus 2 Shimura curves. 
The only geometrically bielliptic curve in Table \ref{modelsgen2} is $\Curve{14}{29}/\langle w_7, w_{29} \rangle$, which is bielliptic over $\Q$.
\end{proposition}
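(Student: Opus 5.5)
The plan is to construct the models with our implementation of the method of \cite{GY17} and then certify them. For each genus $2$ curve $X = \Curve{D}{N}/W$ in Tables \ref{modelsgen2} and \ref{modelsbielliptic}, we first locate a quotient $X \to X/\langle w \rangle$ of genus $0$. We choose $w$ to be the Atkin--Lehner involution that acts as the hyperelliptic involution; on a genus $2$ curve some such $w$ is forced whenever the quotient has genus $0$, and this can be checked directly from the genus formula. We also need a Hauptmodul $x$ on the genus $0$ curve $X/\langle w\rangle$, or on the star curve beneath it, realised as a Borcherds product. By Theorem~\ref{thm:ALfixedpoints}, the fixed points of $w$ on $X$ are CM points of explicitly known discriminants, and there are $2g+2 = 6$ of them. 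The branch locus of $X \to \P^1$ is therefore the image of these CM divisors. Using Schofer's formula, we evaluate $x$ at each of these CM points. This gives $y^2 = c\prod_i (x - x(P_i))$, where the factors are grouped into rational polynomials according to the Galois orbits of the CM points. The constant $c$ is determined only up to squares. To fix $c$, we evaluate a second Borcherds form $y$ whose divisor is supported on the ramification divisor at one extra CM point, or use the local structure at primes dividing $D$, following the recipe in \cite{GY17}.

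Next, we certify each model independently of the numerics. First, we check that the discriminant of the sextic is supported on primes dividing $DN$ together with $2$ and $3$, as required by good reduction outside $DN$. Second, for many primes $p \nmid DN$ we compare $\#X(\F_{p^v})$ for the model against the trace formula count \eqref{eq:point counts from traces}. Because Schofer's formula yields exact algebraic (in fact rational after symmetrisation) values, the branch points are determined exactly. The point counts then only pin down the quadratic twist, that is, the class of $c$ modulo squares. Matching the counts at enough primes rules out every other twist, since two distinct twists differ in their Frobenius traces at a positive density of primes.

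For the bielliptic statement, we use that a genus $2$ curve $y^2 = f(x)$ is geometrically bielliptic exactly when its reduced automorphism group contains an involution besides the identity. Equivalently, its Igusa invariants satisfy a known polynomial relation, or $f$ is invariant under a Möbius involution of $x$ up to scalar. For each model we compute the geometric automorphism group in \Magma. For all curves except $\Curve{14}{29}/\langle w_7, w_{29} \rangle$, the group is just $\langle \iota \rangle$, so those curves are not geometrically bielliptic. For $\Curve{14}{29}/\langle w_7,w_{29}\rangle$ we exhibit the extra involution explicitly. We expect it to be induced by a remaining Atkin--Lehner involution such as $w_2$, which would make it defined over $\Q$. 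We then check that its quotient is an elliptic curve over $\Q$, which gives biellipticity over $\Q$.

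The main obstacle is the Schofer evaluation step, specifically the limitation noted in \S\ref{sec:limits}: CM points of non-fundamental discriminant arising when the Borcherds form has nonzero weight. My first attempt will be to choose the Hauptmodul and auxiliary forms of weight $0$, and to restrict to curves whose ramification points have fundamental discriminants. Where that fails, I will use the point-count verification to select among the finitely many candidate values.
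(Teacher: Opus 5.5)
Your route matches the paper's proof. The models come from the method of \cite{GY17}, and the bielliptic claim is settled by computing $\Aut_{\Qbar}(X)$ in \Magma{} (the paper uses \cite{AutGroup}). That group is $\Z/2\Z$ for every curve in Table~\ref{modelsgen2} except $\Curve{14}{29}/\langle w_7, w_{29}\rangle$, for which the paper finds $\Aut_{\Q}(X) \simeq (\Z/2\Z)^2$ and writes down both quotient maps. Your point-count certification is an extra layer the paper does not include. If you keep it, matching traces ``at enough primes'' is a finite check only after you confine the class of $c$. Good reduction outside $DN$ puts it in the finite subgroup of $\Q^{\times}/\Q^{\times 2}$ generated by $-1$ and the primes dividing $2DN$. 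You also need that, since the levels here are squarefree, no nontrivial quadratic twist of $J(X)$ is isogenous to $J(X)$; otherwise distinct twists need not have distinct traces.

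One step fails as written: the extra involution on $X = \Curve{14}{29}/\langle w_7, w_{29}\rangle$ cannot be $w_2$. Here $\ALfull{14}{29}/W = \langle w_2\rangle$. The quotient $X/\langle w_2\rangle = \StarCurve{14}{29}$ is the genus $0$ curve whose Hauptmodul is the $x$ of Table~\ref{modelsgen2}. Equivalently, $w_2$ has $(f_2+f_{14}+f_{58}+f_{406})/4 = (4+0+4+16)/4 = 6$ fixed points on $X$. So $w_2$ is the hyperelliptic involution $\iota$, which is exactly why this curve appears in the table of curves \emph{without} bielliptic Atkin--Lehner involutions.

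The bielliptic involutions $\sigma$ and $\sigma\iota$ are therefore not Atkin--Lehner, and their rationality does not come for free. A priori $\Gal(\Qbar/\Q)$ could interchange them, and then $X$ would be geometrically bielliptic but not bielliptic over $\Q$. You have to read rationality off the computation, checking that $\Aut_{\Q}(X) = \Aut_{\Qbar}(X) \simeq (\Z/2\Z)^2$, as the paper does. The two quotients over $\Q$ are then the elliptic curves 406.c2 and 14.a2. This is consistent with Proposition~\ref{prop:decomposition}, which makes $J(X)$ isogenous to a product of elliptic curves of conductors $406$ and $14$.
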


\begin{proof}
    The models were computed using the method of \cite{GY17}, and for each curve $X$, we computed its geometric automorphism group $\Aut_{\Qbar}(X)$ using the implementation in \Magma{} of \cite{AutGroup}. It turns out that $\Aut_{\Qbar}(X) \simeq \Z / 2 \Z$ for all $X$ in Table~\ref{modelsgen2} other than $X = \Curve{14}{29}/\langle w_7, w_{29} \rangle$, for which $\Aut_{\Q}(X) \simeq (\Z / 2 \Z)^2$. Quotienting by a non-hyperelliptic involution, we obtain the elliptic curve \href{https://www.lmfdb.org/EllipticCurve/Q/406/c/2}{406.c2} defined by 
    $$
    E_1 : y^2 + xy = x^3 + x^2 - 2124x - 60592,
    $$
    with $\pi_1 : X \to E_1$ given by 
    {\small
    \begin{equation*}
    (x,y) \mapsto \left( \frac{19208 x^3 - 278173 x^2 + 27167 x - 6130}{343 x^3 - 735 x^2 + 525 x - 125}, \frac{-9604x^3 + 19159 x^2 - 12873 x + 1421 y + 1644}{343 x^3 - 735 x^2 + 525 x - 125} \right)
    \end{equation*}}
    Quotienting by the other non-hyperelliptic involution, we get the elliptic curve \href{https://www.lmfdb.org/EllipticCurve/Q/14/a/2}{14.a2} defined by
    $$E_2 : y^2 + xy + y = x^3 - 171x - 874,$$
    with $\pi_2 : X \to E_2$ given by 
    {\small
    \[(x,y) \mapsto  \left( \frac{1792x^3 - 4385x^2 + 3604x - 996}{49x^2 - 84x + 36},  
    \frac{-768x^3 + 1912x^2 - 256xy - 1376x + 256y + 84}{49x^2 - 84x + 36}\right).\qedhere \]
    }
\end{proof}

\begin{table}
\centering
\small
\renewcommand{\arraystretch}{1.1}
\begin{tabular}{lll}
\hline
\textbf{Curve} & $h(x)$  & $f(x)$   \\
\hline

$\Curve{6}{29}/\langle w_3, w_{29} \rangle$
&
$-x^2-x$ & $-144x^5-117x^4+41x^3+21x^2-6x$

\\

$\Curve{14}{13}/\langle w_2, w_{13} \rangle$
&
$2x^3-3x^2+x$ &$\begin{aligned}[t]-44x^6&-626x^5-3296x^4-8298x^3\\
&-10950x^2-7306x-1950\end{aligned}$

\\

$\Curve{14}{13}/\langle w_7, w_{13} \rangle$
&
$2x^3-3x^2+x$ &$\begin{aligned}[t]-1716x^6&+1816x^5-117x^4-291x^3\\
&+11x^2+18x+2\end{aligned}$

\\

$\Curve{10}{17}/\langle w_{10}, w_{34} \rangle$
&
$x^2-1$ & $-4x^5+11x^4-33x^3+21x^2-15x-44$

\\

$\Curve{14}{29}/\langle w_7, w_{29} \rangle$
&
$-x^2+x-2$ &$1792x^5-6487x^4+9347x^3-6701x^2+2390x-340$

\\

$\Curve{14}{29}/\langle w_7, w_{58} \rangle$
&
$-x-1$&$-224x^5+365x^4-194x^3+47x^2-6x$

\\

$\Curve{6}{37}/\langle w_6, w_{74} \rangle$
&
$-x^2-x$ &$972x^5-2411x^4+2244x^3-929x^2+144x$

\\

$\Curve{6}{41}/\langle w_2, w_3 \rangle$
&
$-x^2-x$ & $\begin{aligned}[t]-2304x^6&-3862x^5-2270x^4-687x^3\\
&-230x^2-76x-10\end{aligned}$

\\

$\Curve{6}{41}/\langle w_3, w_{41} \rangle$
&
$-x^2-x$ &$-288x^5-303x^4-95x^3-27x^2-12x-2$

\\

$\Curve{6}{43}/\langle w_2, w_3 \rangle$
&
$-x^2-1$& $\begin{aligned}[t]-15552x^6&+24614x^5-16682x^4+5999x^3\\
&-1182x^2+119x-5\end{aligned}$

\\

$\Curve{10}{53}/\langle w_5, w_{106} \rangle$
&
$-x^2-x$ &$20x^5-25x^4+35x^3-16x^2+9x+1$

\\

$\Curve{6}{59}/\langle w_2, w_{59} \rangle$
&
$-x^2-x$ &$6x^5+16x^4-23x^3-26x^2+84x-40$

\\

$\Curve{6}{61}/\langle w_2, w_3 \rangle$
&
$x^3+x^2$ &$ -x^6-2x^5+12x^4+8x^3-41x^2+84x-232$
\\

$\Curve{106}{1}/\langle w_2 \rangle$
&
$-2x^3+3x^2-3x$ & $\begin{aligned}[t]-20x^6&+164x^5-540x^4+872x^3\\
&-700x^2+250x-32\end{aligned}$

\\

$\Curve{118}{1}/\langle w_2 \rangle$
&
$-2x^3-x^2$ &$\begin{aligned}[t]-688676x^6&+1519312x^5-1397890x^4+686641x^3\\ &-189920x^2+28048x-1728\end{aligned}$

\\

\hline
\end{tabular}
\caption{Polynomials $h(x)$ and $f(x)$ such that $y^2 +h(x)y =f(x)$ is a model of the curve for genus 2 curves that do not have bielliptic Atkin--Lehner involutions. Each curve is given as a degree $2$ cover of a star curve $\StarCurve{D}{N}$ with hauptmodul $x$.} 
\label{modelsgen2}
\end{table}

\begin{remark}
To make the equations look nicer, the equations  Table \ref{modelsgen2} corresponding to the same star curve  correspond to  different hauptmoduls.
\end{remark}

\begin{table}[ht]
\centering
\small
\renewcommand{\arraystretch}{1.1}
\begin{tabular}{lll}
\hline
\textbf{Curve} & $h(x)$  & $f(x)$   \\
\hline
$X_0(6,	17)/\langle w_3 \rangle$	&$ x^3 +1$& $ -5x^6+3x^5-44x^4-53x^3-44x^2+3x-5$\\

$X_0(6,	17)/\langle w_6 \rangle$	&0 & $-9x^6 + 27x^5 - 94x^4 + 143x^3 - 446x^2 + 379x - 321$\\
\hline
\end{tabular}
\caption{Polynomials $h(x)$ and $f(x)$ such that $y^2 +h(x)y =f(x)$ is a model of the curve for a genus 2 curve that is known to have a bielliptic Atkin--Lehner involution. The coordinate $x$ is a hauptmodul of  $\Curve{6}{17}/\langle w_3, w_{34} \rangle$ and $\Curve{6}{17}/\langle w_6, w_{34} \rangle$ in each case.}
\label{modelsbielliptic}
\end{table}

\newpage
\bibliographystyle{alpha}
\bibliography{arXiv_v1}

\appendix
\counterwithin{equation}{section}

\section{Trace formulas for the modular non-Atkin--Lehner involutions}
\label{appendix}

Recall that when either $4 \mid N$ or $9 \parallel N$, the curve $\Curve{D}{N}$ admits modular involutions which are non-Atkin--Lehner, as in Section \ref{sec:modularnonAL}. In particular, if $4 \parallel N$, then we have $\sigma = S_2 = \mattwo{2}{1}{0}{2}$ , if $8 \mid N$, then we have $\sigma = V_2 = S_2 w_{2^{v_2(N)}} S_2^{-1}$, and if $9 \parallel N$, we have $\sigma = V_3 = S_3 w_9 S_3^{-1}$. In either case, we have $\sigma \Gamma_0(N) \sigma^{-1} = \Gamma_0(N)$, so that $\Gamma_0(N) \sigma = \Gamma_0(N) \sigma \Gamma_0(N)$ is a double coset, giving rise to a double coset operator. In this appendix we derive some formulas, following \cite{PopaII}, that allow for faster trace formula computation for the operators $\sigma w_m$, where $w_m \in \ALfull{D}{N}$ is an Atkin--Lehner involution. 
These formulas allow one to compute the dimensions of the spaces described in Section \ref{sec:modularnonAL}.

Write $\Gamma = \Gamma_0(N)$, and let $\Sigma = \Gamma \sigma$ for some $\sigma$ such that $\sigma \Gamma \sigma^{-1} = \Gamma$.
For $M \in M_2(\Z)$, let $\calS_N(M, \Sigma) \colonequals \{ X \in \Gamma \backslash \Gamma_0(1) : XMX^{-1} \in \Sigma \}$, and $C_N(M, \Sigma) := \# \calS_N(M, \Sigma)$. 
By \cite[Theorem 1 and \S 2.3]{PopaII}, in order to compute $\Tr([\Sigma]; S_2(N))$, we need to be able to quickly evaluate $C_N(M, \Sigma)$, as was done for the Hecke operators by Oesterl{\'e} \cite[Lemma 4.1]{PopaII}. In this appendix, we describe how to do this computation.
This will require describing membership in $\Sigma$ by congruence conditions, which we split to two cases.

\begin{lemma} Assume $4 \parallel N$.
    Let $\Sigma = \Gamma S_2$, where $S_2 = \mattwo{2}{1}{0}{2}$.
    Then 
    $$
    M = \mattwo{a}{b}{c}{d} \in \Sigma \iff 
    \begin{array}{ccc}
        2N \mid c,  & 2 \mid a, & \det(M) = 4  \\
        4 \mid 2b - a,  & (a/2, N) = 1 & 4 \mid 2d -c
    \end{array}
    $$
\end{lemma}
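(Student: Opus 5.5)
We will argue directly from the definition $\Sigma = \Gamma S_2$ with $\Gamma = \Gamma_0(N) \subseteq \SL_2(\Z)$. The key observation is that $M \in \Sigma$ if and only if $M S_2^{-1} \in \Gamma_0(N)$. So the plan is to compute $M S_2^{-1}$ explicitly and translate each defining property of $\Gamma_0(N)$ (integral entries, determinant $1$, lower-left entry divisible by $N$) into a congruence condition on $a,b,c,d$.

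First, since $S_2^{-1} = \tfrac{1}{4}\mattwo{2}{-1}{0}{2}$, we have
$$
M S_2^{-1} = \frac14 \mattwo{2a}{2b-a}{2c}{2d-c}.
$$

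For the forward direction, assume $M S_2^{-1} \in \Gamma_0(N)$.
\begin{itemize}
\item Integrality of the four entries gives $2 \mid a$, $4 \mid 2b-a$, $2 \mid c$ and $4 \mid 2d-c$.
\item The determinant condition $\det(M S_2^{-1}) = 1$ is equivalent to $\det(M) = \det(S_2) = 4$.
\item The level condition $N \mid c/2$ is exactly $2N \mid c$, and this subsumes $2 \mid c$.
\item For $(a/2, N) = 1$: since $\det(M S_2^{-1}) = 1$, the first column $(a/2, c/2)$ of $M S_2^{-1}$ is primitive. Hence $\gcd(a/2, c/2) = 1$, and because $N \mid c/2$ we get $\gcd(a/2, N) = 1$.
\end{itemize}

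For the converse, assume the listed conditions. Then $M S_2^{-1}$ has integer entries, determinant $1$, and lower-left entry $c/2 \equiv 0 \pmod N$. So $M S_2^{-1} \in \Gamma_0(N)$, and therefore $M \in \Gamma S_2 = \Sigma$. In this direction the coprimality condition $(a/2,N)=1$ is not needed; it is a consequence of the others. We list it anyway because it is convenient when counting $C_N(M,\Sigma)$ later.

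There is no real obstacle here. The one point that needs care is noticing that $(a/2, N) = 1$ is redundant rather than an independent constraint, and deducing it from the unimodularity of the first column as above. We will also remark that the hypothesis $4 \parallel N$ is used only to ensure $S_2$ normalizes $\Gamma$, so that $\Sigma$ is a double coset (as recalled before the statement). It plays no role in the membership computation itself.
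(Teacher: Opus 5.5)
Your proposal is correct and follows the paper's proof essentially verbatim: compute $MS_2^{-1} = \frac14\mattwo{2a}{2b-a}{2c}{2d-c}$, then read off the integrality conditions, $\det(M)=4$, and $2N \mid c$, with the converse obtained by reversing these steps. Your derivation of $(a/2,N)=1$ from primitivity of the first column, together with the remark that this condition is redundant for the converse, only makes explicit a step the paper states without comment.
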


\begin{proof}
    Assume first that $M \in \Sigma$, so that $MS_2^{-1}\in \Gamma$.
    Then $\det(M) = \det(MS_2^{-1})\det(S_2) = 4$, and 
    $$
    MS_2^{-1} = \frac{1}{4} \mattwo{2a}{2b-a}{2c}{2d-c} \in \Gamma_0(N).
    $$
    In particular, integrality of the entries yields $2 \mid a$,  $4 \mid 2b - a$ and $4 \mid 2d - c$, and the congruence condition yields $2N \mid c$, and $(a/2,N) = 1$. Conversely, if these all hold then all the entries are integral, $N \mid c/2$ and $\det(MS_2^{-1}) = 1$, showing that $MS_2^{-1} \in \Gamma_0(N)$.
\end{proof}

\begin{lemma} 
\label{lem:congruence conditions for trace}
    Let $p \in \{2,3\}$, and $p \nmid m \mid N$ such that $(m, N/m) = 1$ and $v = v_p(N) \ge 2$. Let $w_m$ be an Atkin--Lehner element of determinant $m$. 
    Let $\Sigma = \Gamma V_p w_m$, where $V_p = S_p w_{p^v} S_p^{-1}$ and $S_p = \mattwo{p}{1}{0}{p}$.
    Then $M = \mattwo{a}{b}{c}{d} \in \Sigma$ if and only if 
    \begin{align}
        \det(M) = mp^v, \quad & N \mid c, \quad m p^v \mid \tr(M) \\
        mp^v \mid a - \frac{c}{p}, \quad & \left(a - \frac{c}{p}, \frac{N}{mp^v} \right) = 1
        , \quad (b, m) = 1, \quad p \nmid b - \frac{d}{p}
    \end{align}
\end{lemma}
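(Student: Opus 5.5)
We follow the proof of the previous lemma. Write $\sigma = V_p w_m$; the idea is to compute $\sigma^{-1}$ explicitly and then express the condition $M\sigma^{-1} \in \Gamma_0(N)$ as integrality, congruence and determinant conditions on the entries of $M$. The determinant condition is immediate. Since $\det V_p = p^v$ and $\det w_m = m$, we have $\det \sigma = mp^v$. Hence $M \in \Sigma$ forces $\det M = mp^v$. Conversely, for $M$ with $\det M = mp^v$, the matrix $M\sigma^{-1}$ has determinant $1$. So it lies in $\Gamma_0(N)$ exactly when it is integral with lower-left entry divisible by $N$.

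First we make $\sigma$ explicit. Choose the Atkin--Lehner element $w_{p^v} = \mattwo{p^v x}{y}{Nz}{p^v t}$ with determinant $p^v$. Choose $w_m = \mattwo{m x'}{y'}{Nz'}{m t'}$, and arrange the choices compatibly. Since $p \nmid m$ and the two primes are coprime, we may pick a single matrix representing $w_{p^v} w_m$ of the form $\mattwo{mp^v \alpha}{\beta}{N\gamma}{mp^v\delta}$ with determinant $mp^v$. Here we use the freedom to choose representatives modulo $\Gamma_0(N)$, and the fact that the lemma concerns the coset $\Gamma\sigma$, which does not depend on the representative. Conjugating by $S_p = \mattwo{p}{1}{0}{p}$ then gives
\[
\sigma = S_p\, w_{p^v m}\, S_p^{-1} = \frac{1}{p^2}\mattwo{p}{1}{0}{p}\mattwo{mp^v\alpha}{\beta}{N\gamma}{mp^v\delta}\mattwo{p}{-1}{0}{p}.
\]
We use $\sigma^{-1} = (\det\sigma)^{-1}\,\mathrm{adj}(\sigma)$, since everything reduces to the single product $M\,\mathrm{adj}(\sigma)/(mp^v)$. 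A more convenient form is
\[
M\sigma^{-1} = \frac{1}{mp^v} M S_p\, \mathrm{adj}(w_{p^v m})\, S_p^{-1}.
\]
Moreover, $\Gamma_0(N)$ is normalised by $S_p$ when $p^2 \mid N$. So the membership question becomes whether $S_p^{-1} M S_p\,\mathrm{adj}(w_{p^v m})/(mp^v)$ lies in $\Gamma_0(N)$. In other words, $M' := S_p^{-1} M S_p$ must lie in $\Gamma_0(N) w_{p^v m}$.

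Membership in the Atkin--Lehner coset $\Gamma_0(N) w_Q$, with $Q = mp^v$ and $(Q, N/Q) = 1$, is classical. It says that $M' = \mattwo{a'}{b'}{c'}{d'}$ satisfies $\det M' = Q$, $Q \mid a'$, $Q \mid d'$, $N \mid c'$ and $(a'/Q, N/Q) = 1$. Depending on the normalisation of $w_Q$, there may also be a unit condition on $b'$ modulo $Q$; this one can derive directly by the same adjugate computation as in the previous lemma. Next compute
\[
S_p^{-1} M S_p = \mattwo{a - c/p}{\,b + (a-d)/p - c/p^2\,}{c}{d + c/p}.
\]
Translating each condition back to the entries of $M$ then gives the stated list. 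The condition $N \mid c' = c$ gives $N \mid c$. The conditions $Q \mid a' = a - c/p$ and $Q \mid d'$ together give $Q \mid a - c/p$ and $Q \mid \tr M$, since $a' + d' = \tr M$. The coprimality $(a'/Q, N/Q) = 1$ becomes the stated gcd condition. Finally, the determinant of the bottom-left block of the congruence conditions, modulo $m$ and modulo $p$ separately, yields $(b,m) = 1$ and $p \nmid b - d/p$. Here one uses $\det = Q$ to rewrite the entry $b'$ modulo $m$ and modulo $p$.

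We expect the main obstacle to be this last translation step. The entry $b'$ involves $c/p^2$, and $p \nmid b - d/p$ is a unit condition at $p$ rather than a divisibility. We will derive it from $(b', \cdot)$-type coprimality forced by $\det M' = Q$ together with $p^v \mid a', d'$ and $p^2 \mid c$ (as $p^2 \mid N$). Reducing $\det M' = a'd' - b'c'$ modulo $p^{v+1}$, we will check that $b'c'/p^v$ must be a $p$-adic unit. The integrality of $c/p$ and $d/p$ implicit in the statement also has to be extracted; we expect it to follow from $N \mid c$ and $p \mid \tr M$. The converse direction will be a reversal of the same computations, exactly as in the previous lemma.
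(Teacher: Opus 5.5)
Your route is the same as the paper's: identify $\Sigma$ with $S_p(\Gamma w_{mp^v})S_p^{-1}$, use the classical description of the Atkin--Lehner coset $\Gamma w_{mp^v}$, and translate through $S_p^{-1}MS_p=\mattwo{a-c/p}{b+(a-d)/p-c/p^2}{c}{d+c/p}$. Your translation steps are fine, including how you get $(b,m)=1$ and $p\nmid b-d/p$ from $\det M'=mp^v$.

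The gap is the identity $\Gamma\sigma=\Gamma S_p\,w_{p^vm}\,S_p^{-1}$. By definition $\sigma=V_pw_m=S_pw_{p^v}S_p^{-1}w_m$. To move $w_m$ inside the conjugation you need $S_p^{-1}w_mS_p\in\Gamma w_m$, i.e.\ that $S_p$ commutes with $w_m$ modulo $\Gamma$. Choosing representatives of $w_{p^v}$ and $w_m$ within their $\Gamma$-cosets cannot give you this. It is a separate input, and the paper's proof begins by asserting exactly it ($S_pw_mS_p^{-1}w_m^{-1}\in\Gamma$).

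That input needs proof, and it is not always true. Take any representative $w_m=\mattwo{mx}{y}{Nz}{mw}$. The upper-right entry of $S_pw_mS_p^{-1}$ is $y+m(w-x)/p-Nz/p^2$, so you need $p\mid w-x$. On the other hand, $\det w_m=m$ gives $mxw\equiv 1\pmod p$, since $p\mid N/m$. Hence the commutation holds for $p=2$, and for $p=3$ with $m\equiv 1\pmod 3$. It fails for every representative when $p=3$ and $m\equiv 2\pmod 3$; this is the $\varepsilon(m)$ of Proposition~\ref{prop:iso9}.

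In that case the statement itself breaks. Take $N=18$, $m=2$, $w_9=\mattwo{9}{4}{18}{9}$ and $w_2=\mattwo{10}{1}{18}{2}$. Then $V_3=\mattwo{15}{2}{18}{3}$ and $V_3w_2=\mattwo{186}{19}{234}{24}\in\Sigma$. Its trace $210$ is not divisible by $mp^v=18$.

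So you must supply the commutation computation and add the hypothesis $m\equiv 1\pmod 3$ when $p=3$. The subsequent proposition treating $\Gamma w_mV_p$ does assume this. After that your argument goes through, with one small correction: $p\mid d$ also uses $mp^v\mid a-c/p$, not only $N\mid c$ and $p\mid\tr M$.
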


\begin{proof}
    First note that $S_p w_m S_p^{-1} w_m^{-1} \in \Gamma$, therefore
    $$
    \Gamma V_p w_m = \Gamma w_m V_p = \Gamma S_p w_m w_{p^v} S_p^{-1} = S_p (\Gamma w_{m p^v})S_p^{-1}.
    $$
    We have
    \begin{equation}
        \Gamma w_{mp^v} = \left \{
            M = \mattwo{a}{b}{c}{d} : \begin{array}{c} \det(M) = mp^v, \quad mp^v \mid \tr(M), \\ 
            \quad m p^v \mid a, \quad (a, \tfrac{N}{mp^v}) = 1, \quad (b, mp^v) = 1 \end{array}
        \right \},
    \end{equation}
    Since 
    $$
    S_p \mattwo{a}{b}{c}{d} S_p^{-1} = \mattwo{a + \frac{c}{p}}{b + \frac{d-a}{p} - \frac{c}{p^2}}{c}{d - \frac{c}{p}},
    $$
    we see that $M = \mattwo{a}{b}{c}{d} \in \Sigma$ if and only if $N \mid c$, $mp^v \mid \tr(M)$, $mp^v \mid a - \frac{c}{p}$, $\left(a - \frac{c}{p}, \frac{N}{mp^v} \right) = 1$ and $\left(b + \frac{a-d}{p} - \frac{c}{p^2}, mp^v \right) = 1$.
    Since $mp^v \mid a - \frac{c}{p}$, it follows that $mp^{v-1} \mid \frac{a}{p} - \frac{c}{p^2}$, and as $v \ge 2$, this last condition is equivalent to $\left(b - \frac{d}{p}, mp^v \right) = 1$. Moreover, $mp^{v} \mid N \mid c$, so $mp^{v-1} \mid \frac{c}{p}$, showing that $mp^{v-1} \mid a$. From $mp^v \mid \tr(M) = a + d$, it follows that also $mp^{v-1} \mid d$, hence this is equivalent to $(b,m) = 1$ and $p \nmid b - \frac{d}{p}$. 
    \end{proof}

Before we proceed, we also need a certain combinatorial counting lemma. 

\begin{lemma}
    Let $N$ be an integer. Let $A, B, C, D, \alpha \in \Z$, and set $u = \gcd(B,C,A-D,N)$.
    Assume that $\alpha^2 - (A + D) \alpha + (AD-BC) \equiv 0 \bmod Nu$.
    Let 
    \begin{equation} \label{eq: linear system mod N}
    S_{\alpha}(N,u) \colonequals \left \{
    (c:d) \in \P^1(\Z / N \Z) : 
    \mattwo{B}{\alpha - A}{D - \alpha}{-C}
    \vectwo{c}{d} \equiv 0 \bmod N
    \right \}.
    \end{equation}
    Then $$ \# S_{\alpha}(N,u) =  \frac{\psi(N)}{\psi(N/u)}, $$
    where $\psi(N) = \# \P^1( \Z / N \Z)$, as in \eqref{eqn:P1}.
\end{lemma}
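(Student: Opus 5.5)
The plan is to reduce to prime powers via the Chinese remainder theorem, and then read off the count from a Smith normal form computation. Write $M_\alpha = \mattwo{B}{\alpha - A}{D - \alpha}{-C}$ and note first that
$$
\det(M_\alpha) = -BC - (\alpha - A)(D - \alpha) = \alpha^2 - (A+D)\alpha + (AD - BC) \equiv 0 \bmod Nu
$$
by hypothesis.

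\emph{Reduction to prime powers.} We have $\P^1(\Z/N\Z) = \prod_{p^e \parallel N} \P^1(\Z/p^e\Z)$, and the congruence $M_\alpha \vectwo{c}{d} \equiv 0 \bmod N$ splits into the corresponding congruences modulo each $p^e$. Moreover, the local invariant $\gcd(B,C,A-D,p^e)$ equals $p^{\min(v_p(u),e)}$, and the hypothesis modulo $Nu$ implies the hypothesis modulo $p^e \cdot p^{v_p(u)}$. Since $\psi$ is multiplicative, it suffices to treat $N = p^e$ and $u = p^f$ with $0 \le f \le e$.

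\emph{All entries of $M_\alpha$ are divisible by $u$.} By definition $u \mid B$, $u \mid C$ and $u \mid D - A$, so it remains to show $u \mid \alpha - A$. We have $(\alpha - A)(\alpha - D) - BC \equiv 0 \bmod Nu$ and $u^2 \mid Nu$, so modulo $u^2$ this gives $(\alpha - A)(\alpha - D) \equiv BC \equiv 0$. Since $\alpha - D \equiv \alpha - A \bmod u$, we get $(\alpha - A)^2 \equiv 0 \bmod u$ at first. To upgrade this to $u \mid \alpha - A$ I would write $\alpha - D = (\alpha - A) + (A - D)$, so that
$$
(\alpha - A)^2 + (\alpha - A)(A - D) \equiv 0 \bmod p^{2f}, \qquad p^f \mid A - D,
$$
and argue by valuations. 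If $v_p(\alpha - A) < f$, then $(\alpha - A)^2$ has strictly smaller valuation than $(\alpha - A)(A - D)$, so the sum has valuation $2v_p(\alpha - A) < 2f$, a contradiction. Hence $u \mid \alpha - A$, and therefore $u \mid D - \alpha$ as well.

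\emph{Counting.} If $f = e$, every point satisfies the congruence and the count is $\psi(p^e) = \psi(p^e)/\psi(1)$, as claimed. Otherwise set $M' = M_\alpha / p^f$. Since $u$ is exactly the gcd of $B$, $C$, $A - D$ and $N$ locally, the matrix $M'$ is primitive modulo $p$, and $\det M' \equiv 0 \bmod p^{e-f}$. The congruence $M_\alpha v \equiv 0 \bmod p^e$ is equivalent to $M' v \equiv 0 \bmod p^{e-f}$. By Smith normal form over $\Z_p$, $M' \sim \diag(1, \delta)$ with $p^{e-f} \mid \delta$. So the kernel of $M'$ modulo $p^{e-f}$ is a free rank-one submodule, spanned by a primitive vector, and its primitive elements form exactly one point of $\P^1(\Z/p^{e-f}\Z)$. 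Thus $S_\alpha(p^e,p^f)$ is the fibre over a single point of the reduction map $\P^1(\Z/p^e\Z) \to \P^1(\Z/p^{e-f}\Z)$. This map is surjective, and all its fibres have size $\psi(p^e)/\psi(p^{e-f})$ (it is equivariant for the transitive action of $\SL_2(\Z/p^e\Z)$, or one can count directly: $p^f$ lifts per point). This gives $\psi(N)/\psi(N/u)$.

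The step I expect to be most delicate is the divisibility $u \mid \alpha - A$ together with the primitivity of $M'$. This is exactly where the hypothesis modulo $Nu$, rather than merely modulo $N$, is used, and the valuation argument above is what I would try first.
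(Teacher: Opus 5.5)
Your proof is correct and follows the paper's argument essentially step by step: reduce to $N = p^e$, $u = p^f$ by CRT; use the congruence modulo $Nu$ and a valuation argument to show $p^f$ divides every entry of $M_\alpha$; divide out and take the Smith normal form of the primitive matrix $M'$ (with $v_p(\det M') \ge e-f$) to get exactly one point of $\P^1(\Z/p^{e-f}\Z)$, which has $p^f = \psi(p^e)/\psi(p^{e-f})$ lifts. The only detail you leave implicit, as the paper also does, concerns primitivity of $M'$ when the minimal valuation $f$ is attained only by $A-D$: since $D-A = (D-\alpha)+(\alpha-A)$, one of the two off-diagonal entries of $M'$ is then a unit.
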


\begin{proof}
    Assume that $N = p^a$, $u = p^b$ with $b \le a$. Let $v$ be the valuation with respect to $p$. 
    Note that
    $$
    p^{a+b} \mid \alpha^2 - (A+D) \alpha + (AD-BC) = -BC - (D-\alpha)(\alpha - A),
    $$
    hence $v(-BC - (D-\alpha)(\alpha - A)) \ge a + b \ge 2b$. 
    Since $v(B), v(C) \ge b$ we have $v(BC) \ge 2b$, so that 
    $$
    v(D - \alpha) + v(\alpha - A) = v((D - \alpha)(\alpha - A)) \ge 2b.
    $$
    Therefore, either $v(D - \alpha) \ge b$ or $v(\alpha - A) \ge b$.
    As $v(D - A) \ge b$ and $D - A = (D - \alpha) + (\alpha - A)$, we must have $v(D - \alpha) \ge b$ and $v(\alpha - A) \ge b$.
    It follows that \eqref{eq: linear system mod N} is equivalent to
    \begin{equation} \label{eq: linear system mod N/u}
    p^{-b}\mattwo{B}{\alpha - A}{D - \alpha}{-C}
    \vectwo{c}{d} \equiv 0 \bmod p^{a-b}
    \end{equation}
    The matrix $M = p^{-b}\mattwo{B}{\alpha - A}{D - \alpha}{-C}$ has $v(\det(M)) \ge a - b$.

    If $a = b$, then any $(c:d)$ is a solution, and we get $\psi(p^a) = p^a(1 + p^{-1})$ solutions.

    Assume $a > b$.
    Let $$M = X \mattwo{m_1}{0}{0}{m_2}Y$$ be the Smith normal form of $M$ with $X,Y \in \SL_2(\Z)$. By choice of $u$, we have $v(m_1) = 0$, so that $v(m_2) = v(\det(M)) \ge a - b$.
    $M$ induces a linear map $\overline{M}$ on $(\Z/ p^{a-b} \Z)^2$,
    and it follows that $\ker \overline{M} \simeq \Z / p^{a-b} \Z$. Thus, there is a unique $(c:d) \in \P^1(\Z / p^{a-b} \Z)$ solving \eqref{eq: linear system mod N/u}, lifting to $p^b$ points $(c:d) \in \P^1(\Z / N \Z)$ for \eqref{eq: linear system mod N}.

    Summing up, we have 
    $$
    \# S(p^a , p^b) = \begin{cases}
        p^a(1+p^{-1}) & a = b \\
        p^b & a > b
    \end{cases}
    = \frac{\psi(p^a)}{\psi(p^{a-b})}.
    $$
    Using CRT we obtain the result.
\end{proof}

The following proposition is then an analogue of \cite[Lemma 4.1]{PopaII} for $\sigma = V_p$.

\begin{proposition} Let $p \in \{2,3\}$, and let $v = v_p(N)$. Assume $v \ge 3$.
    For $M = \mattwo{A}{B}{C}{D}$ with $\tr(M) = t$, $\det(M) = p^v$, $u = (G(M),N)$, where $G(M) = \gcd(C, D-A, B)$, and $p^v \mid t$, let $Q_M(x,y) = Cx^2 + (D-A)xy -By^2$. Then
   \begin{equation}
    C_N(M,  \Gamma V_p) = \frac{\psi(N)}{\psi(N/u)} \# 
    \left \{ x \in \left( \Z / \tfrac{N}{p^{v-1}} \Z \right)^{\times} : 
    p^{v-2} x^2 - \tfrac{t}{p} x + 1 \equiv 0 \bmod \tfrac{N}{p^v} \cdot u\right\}
   \end{equation}
\end{proposition}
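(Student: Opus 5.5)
Recall that $C_N(M,\Gamma V_p)$ counts cosets $X \in \Gamma\backslash\Gamma_0(1)$ with $XMX^{-1}\in \Gamma V_p$. The plan has three steps. First, parametrise these cosets by $\P^1(\Z/N\Z)$. Second, translate membership in $\Gamma V_p$ into congruences using Lemma~\ref{lem:congruence conditions for trace} with $m=1$. Third, count the solutions with the combinatorial counting lemma for $S_\alpha(N,u)$, as Oesterl\'e does for Hecke operators in \cite[Lemma 4.1]{PopaII}.

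\textbf{Coordinates.} Write $X=\mattwo{\ast}{\ast}{c}{d}$, so that its coset corresponds to $(c:d)\in\P^1(\Z/N\Z)$. Conjugation preserves determinant and trace, so $\det(XMX^{-1})=p^v$ and $p^v\mid \tr(XMX^{-1})=t$ hold automatically. The lower left entry of $XMX^{-1}$ is, up to sign, the binary form $Q_M(d,-c)$, i.e.\ $Cd^2-(D-A)cd-Bc^2$ up to sign conventions. The condition $N\mid c'$ from Lemma~\ref{lem:congruence conditions for trace} therefore becomes $Q_M\equiv 0\bmod N$ at the point $(c:d)$.

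\textbf{Key reduction.} The remaining conditions of Lemma~\ref{lem:congruence conditions for trace} are $p^v\mid a'-c'/p$, $\gcd\!\left(a'-c'/p,\,N/p^v\right)=1$ and $p\nmid b'-d'/p$. I will encode them through an auxiliary residue $\alpha$, defined so that $(c,d)$ is an ``eigenvector'' of $M$ modulo $N$ with eigenvalue $\alpha$. In the Oesterl\'e setup, $a'$ equals the eigenvalue $\alpha$ attached to the row vector $(c,d)$, namely $(c,d)M\equiv\alpha(c,d)$. This is exactly the system in \eqref{eq: linear system mod N}. The twist by $S_p$ shifts the eigenvalue: the condition $p^v\mid a'-c'/p$ says that $\alpha$ is determined by the $p$-part. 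Writing $\alpha=p^{v-1}x$ (using $p^{v-1}\mid a'$, as in the proof of Lemma~\ref{lem:congruence conditions for trace}), the characteristic equation $\alpha^2-t\alpha+p^v\equiv 0$ divided by $p^v$ becomes
$$p^{v-2}x^2-\tfrac{t}{p}x+1\equiv 0 \bmod \tfrac{N}{p^v}u.$$
The coprimality condition $\gcd(a'-c'/p,N/p^v)=1$ becomes $x$ being a unit. The condition $p\nmid b'-d'/p$ should be automatic once $v\ge 3$, by the same valuation bookkeeping used in the lemma's proof. I expect this step to be the main obstacle: getting the precise modulus, i.e.\ showing that $x$ lives in $\Z/(N/p^{v-1})\Z$ and that the congruence holds modulo $\tfrac{N}{p^v}u$ rather than $N u$. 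I would first treat $N=p^v$, then the prime-to-$p$ part, and glue the two by CRT.

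\textbf{Counting.} For each admissible $x$, set $\alpha=p^{v-1}x$. The hypothesis of the counting lemma, $\alpha^2-t\alpha+\det M\equiv 0 \bmod Nu$, follows from the displayed congruence after multiplying by $p^v$. The counting lemma then gives exactly $\psi(N)/\psi(N/u)$ points $(c:d)$ for each $x$. Distinct $x$ give disjoint solution sets, because $\alpha$ is recovered from $(c:d)$ as the eigenvalue; when $u>1$ this holds only modulo the appropriate modulus, which is why $x$ is taken modulo $N/p^{v-1}$. Summing over $x$ yields the stated formula.
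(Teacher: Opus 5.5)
Your outline is the paper's proof. You conjugate $M$ by $X=\mattwo{a}{b}{c}{d}$, impose the conditions of Lemma~\ref{lem:congruence conditions for trace} with $m=1$, write the upper-left entry $a'\equiv\alpha\pmod N$ as $\alpha=p^{v-1}x$, divide $\alpha^2-t\alpha+p^v\equiv0\pmod{Nu}$ by $p^v$, and count fibres with the lemma on $S_\alpha(N,u)$. Two small slips:
\begin{itemize}
\item The system \eqref{eq: linear system mod N} says $(c,d)M\equiv(t-\alpha)(c,d)$. So it is the column vector $(-d,c)^{T}$ that has eigenvalue $\alpha$, not the row vector $(c,d)$. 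This is harmless for the quadratic, but it replaces $x$ by $-x$ modulo $p$, which matters below.
\item The $p$-part of ``$x$ is a unit'' does not come from the coprimality condition. It comes from $v_p(c')=v$ together with $p^v\mid a'-c'/p$.
\end{itemize}

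The genuine gap is the converse step, which you assert without checking, as does the paper's own proof. You claim that for every admissible $x$, \emph{all} $\psi(N)/\psi(N/u)$ points of $S_\alpha(N,u)$ lie in $\calS_N(M,\Gamma V_p)$. For such a point one does get $N\mid c'$ and $a'\equiv\alpha\pmod N$. From $c'\,Q_M(-b,a)=a'^2-ta'+p^v\equiv p^v\pmod{p^{v+1}}$ (this is where $v\ge3$ enters) one also gets $v_p(c')=v$ and $p\nmid Q_M(-b,a)$. Hence $p\nmid b'-d'/p$ and $\gcd(a'-c'/p,N/p^v)=1$ are automatic, as you expect. But $p^v\mid a'-c'/p$ is equivalent to $x\equiv c'/p^v\pmod p$, and that is not automatic.

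For $p=2$ both sides of $x\equiv c'/p^v$ are odd, so the condition holds and your argument, once written out, proves the statement. For $p=3$ it is a genuine sign condition, and the statement as written fails. Take $N=27$ and $M=V_3=\mattwo{9}{-4}{27}{-9}$. Then $t=0$, $u=1$, and the right-hand side is $\#(\Z/3\Z)^\times=2$.
\begin{itemize}
\item One has $27\mid Q_M(-d,c)=27d^2+18cd+4c^2$ iff $9\mid c$, leaving the points $(0:1)$, $(9:1)$, $(18:1)$.
\item The point $(9:1)$ lies in $S_{18}(27,1)$ with $x=2$ a unit. Yet $XMX^{-1}=\mattwo{45}{-4}{513}{-45}$, and $45-513/3=-126\not\equiv0\pmod{27}$, so it is not in $\calS_N(M,\Gamma V_3)$.
\item The point $(18:1)$ has $x\equiv0$, so it is not admissible.
\end{itemize}
Only $(0:1)$ survives, so $C_{27}(M,\Gamma V_3)=1$.

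So your proof, like the paper's, establishes only the case $p=2$. For $p=3$ you must track the extra condition $x\equiv c'/3^v\pmod 3$. In this example, the factor $1/(p-1)$ from the paper's next proposition gives the correct count.
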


\begin{proof}
    Let $X = \mattwo{a}{b}{c}{d} \in \Gamma(1)$, then
    $$
     X M X^{-1} = \mattwo{\alpha}{-Q_M(-b,a)}{Q_M(-d,c)}{t-\alpha}, 
    $$
    where $\alpha$ satisfies
    $$
    \alpha^2 - t \alpha + p^v = Q_M(-d, c) Q_M(-b, a).
    $$
    By Lemma~\ref{lem:congruence conditions for trace} above, $XMX^{-1} \in \Gamma V_p$ if and only if 
    $$
    N \mid Q_M(-d,c), \quad p^v \mid \alpha - Q_M(-d,c)/p, \quad  p \nmid Q_M(-b,a), \quad (\alpha - Q_M(-d,c)/p, N/p^v) = 1.
    $$
    Assume $X \in \calS_N(M, \Gamma V_p)$. Then $N \mid Q_M(-d,c)$ and $u \mid Q_M(-b,a)$, hence 
    $$
    \alpha^2 - t \alpha + p^v \equiv 0 \bmod Nu.
    $$
    Since $p^{v-1} \mid Q_M(-d,c)/p$, we must have $p^{v-1} \mid \alpha$, and write $\alpha = p^{v-1} \alpha_0$, 
    giving us a map $\tau : C_N(M, \Sigma) \to S_N(u,t)$ sending $X \mapsto \alpha_0$, where
    $$
    S_N(u,t) = \{ \alpha_0 \in (\Z / (N/p^{v-1}) \Z) : p^{v-2} \alpha_0^2 - \tfrac{t}{p} \alpha_0 + 1 \equiv 0 \bmod \tfrac{N}{p^v} u\}.
    $$
    Moreover, $p^{v-1} \mid \alpha$ implies that $p^{2v-2} \mid \alpha^2 - t\alpha $. 
    If $v \ge 3$, then $2v-2 > v$, so that 
    $$ 
    Q_M(-d, c) Q_M(-b, a) = \alpha^2 - t \alpha + p^v \equiv p^v \bmod p^{v+1}, 
    $$
    showing that $p^{v+1} \nmid Q_M(-d,c)$. Since $p^v \mid \alpha - Q_M(-d,c)/p$, it follows that $p^v \nmid \alpha$, or equivalently $p \nmid \alpha_0$. Moreover, even if $v = 2$ and $p \mid \alpha_0$, we still get $p^4 \mid \alpha^2 - t \alpha$, hence $p \nmid \alpha_0$.
    Since $N \mid Q_M(-d,c)$, $N/p^v \mid Q_M(-d,c)/p$, so that $(\alpha, N/p^v) = 1$, showing that $\alpha_0 \in \left( \Z / \tfrac{N}{p^{v-1}} \Z \right)^{\times}$.
    
    Conversely, given $\alpha_0$, we recover $\alpha$, and every $(c:d) \in S_{\alpha}(N,u)$ gives rise to such $X$. The previous lemma now gives the result.
\end{proof}

The next proposition is the analogue for $\sigma = w_m V_p$ when $w_m$ and $V_p$ commute.

\begin{proposition} Let $p \in \{2,3\}$, $p \nmid m \mid N$ such that $(m, N/m) = 1$ and let $v = v_p(N)$. Assume $v \ge 2$. Moreover, if $p = 3$, assume that $m \equiv 1 \bmod 3$.
    For $M = \mattwo{A}{B}{C}{D}$ with $\tr(M) = t$, $\det(M) = mp^v$, $u = (G(M),N)$, where $G(M) = \gcd(C, D-A, B)$, and $mp^v \mid t$, let $Q_M(x,y) = Cx^2 + (D-A)xy -By^2$. Then
   \begin{equation}
    C_N(M,  \Gamma w_mV_p) = \frac{\psi(N)}{(p-1)\psi(N/u)} \# S_N(u,t)
   \end{equation}
   with
   $$
   S_N(u,t) = \left \{ x \in \left( \Z / \tfrac{N}{mp^{v-1}} \Z \right)^{\times} : 
    mp^{v-2} x^2 - \tfrac{t}{p} x + 1 \equiv 0 \bmod \tfrac{N}{mp^v} \cdot u\right\}.
   $$
\end{proposition}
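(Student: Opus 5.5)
We follow the proof of the preceding proposition (the case $\sigma = V_p$), now using the congruence conditions of Lemma~\ref{lem:congruence conditions for trace} for $\Sigma = \Gamma V_p w_m$. Write $\Sigma = \Gamma w_m V_p$; by the assumption $m \equiv 1 \bmod 3$ when $p=3$, $w_m$ and $V_p$ commute modulo $\Gamma$. For $X = \mattwo{a}{b}{c}{d} \in \Gamma(1)$ we again have
$$
XMX^{-1} = \mattwo{\alpha}{-Q_M(-b,a)}{Q_M(-d,c)}{t-\alpha}, \qquad \alpha^2 - t\alpha + mp^v = Q_M(-d,c)Q_M(-b,a).
$$
By Lemma~\ref{lem:congruence conditions for trace}, $X \in \calS_N(M,\Sigma)$ if and only if the following hold: $N \mid Q_M(-d,c)$; $mp^v \mid \alpha - Q_M(-d,c)/p$; $(\alpha - Q_M(-d,c)/p, N/(mp^v)) = 1$; $(Q_M(-b,a), m) = 1$; and $p \nmid Q_M(-b,a) + (t-\alpha)/p$. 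Since $u \mid Q_M(-b,a)$, we get $\alpha^2 - t\alpha + mp^v \equiv 0 \bmod Nu$.

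Next we extract the divisibility of $\alpha$. From $mp^v \mid N \mid Q_M(-d,c)$ we get $mp^{v-1} \mid Q_M(-d,c)/p$, hence $mp^{v-1} \mid \alpha$. Write $\alpha = mp^{v-1}x$. Substituting and dividing by $mp^v$ gives $mp^{v-2}x^2 - \tfrac{t}{p}x + 1 \equiv 0 \bmod \tfrac{N}{mp^v}u$; this divisibility uses $mp^v \mid t$. The coprimality of $x$ to $N/(mp^{v-1})$ follows as before. For the part of $N/(mp^v)$ away from $p$, it comes from $(\alpha, N/(mp^v)) = 1$. For $p \nmid x$, the $v=2$ subtlety is handled as in the previous proof via $p^{v+1} \nmid Q_M(-d,c)$, or directly from the reduced quadratic congruence (whose constant term is $1$) when $v=2$. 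This defines a map $\tau : \calS_N(M,\Sigma) \to S_N(u,t)$, $X \mapsto x$.

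Conversely, fix $x \in S_N(u,t)$. The integer $\alpha$ is determined modulo $N$ only up to the lift of $x$, and we count the $(c:d) \in S_\alpha(N,u)$ via the combinatorial counting lemma, which gives $\psi(N)/\psi(N/u)$ classes for each admissible $\alpha$. The main obstacle, and the source of the new factor $1/(p-1)$, is the residual condition $p \nmid Q_M(-b,a) + (t-\alpha)/p$ together with the identification of fibres of $\tau$. Among the $(c:d)$ solving the linear system, the value of $Q_M(-b,a)$ modulo $p$ ranges over the nonzero residues equally, with the fibre over each $x$ splitting into $p-1$ classes. Equivalently, the passage from $\Gamma w_{mp^v}$ (where $b$ is a unit mod $p$) to $\Sigma$ through conjugation by $S_p$ identifies $p-1$ lifts. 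I would verify this carefully by working locally at $p$ (CRT separating $p$ from the prime-to-$p$ part), checking that exactly a $1/(p-1)$ proportion of the solutions is counted once per class in $\Gamma\backslash\Gamma(1)$. Assembling with CRT yields $C_N(M,\Gamma w_m V_p) = \frac{\psi(N)}{(p-1)\psi(N/u)}\#S_N(u,t)$.
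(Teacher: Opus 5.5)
There is a genuine gap: the converse step that produces the factor $\frac{1}{p-1}$ is only asserted, and the mechanism you propose for it is wrong.

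Your forward direction (the map $\tau$ into $S_N(u,t)$, including $p \nmid x$) is the paper's. The problem is the converse. If $p \mid u$ then $S_N(u,t)=\emptyset$, so take $p\nmid u$. The counting lemma then gives exactly one local coset in $\P^1(\Z/p^v\Z)$ over each $x$. Hence the residues of $Q_M(-b,a)$ and of $Q_M(-d,c)/(mp^v)$ modulo $p$ are constant along the whole fibre $S_\alpha(N,u)$. So fibres of $\tau$ do not split into $p-1$ classes with $Q_M(-b,a)$ equidistributed; each fibre lies entirely inside or entirely outside $\Sigma$.

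For example, take $N=9$, $m=1$, $M=\mattwo{0}{-1}{9}{0}$. Then $S_9(1,0)=\{1,2\}$. The fibre over $x=2$ is the single coset of $\mattwo{1}{0}{-3}{1}$ (with $\alpha=-3$), and the fibre over $x=1$ is the single coset of $\mattwo{1}{0}{-6}{1}$ (with $\alpha=-6$). Both have $Q_M(-b,a)=1$, and only the first lies in $\Gamma V_3$, so $C_9(M,\Gamma V_3)=1$.

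The factor actually comes from comparing different $x$. The modulus $\tfrac{N}{mp^v}u$ is prime to $p$, so $x \bmod p$ is an unconstrained unit coordinate of $S_N(u,t)$. The condition $mp^v \mid \alpha - Q_M(-d,c)/p$, which you never verify in the converse, is what pins it down. The paper proceeds as follows:
\begin{itemize}
\item It first reads off from $\alpha^2-t\alpha+mp^v=Q_M(-d,c)Q_M(-b,a)$ that $v_p(Q_M(-d,c))=v$ and $p\nmid Q_M(-b,a)$.
\item For $p=2$ (where necessarily $v\ge 3$, since $V_2$ requires $8\mid N$), $\alpha$ and $Q_M(-d,c)/2$ both have $2$-adic valuation exactly $v-1$, so $2^v$ divides their difference. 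Also $4\mid t-\alpha$ makes the last condition automatic, so every coset counts.
\item For $p=3$, $v=2$, pair $x$ with its conjugate $\bar x=\tfrac{t}{3m}-x\equiv -x \pmod 3$, which also lies in $S_N(u,t)$. The two local cosets at $3$ have the same residue of $Q_M(-d,c)/(9m)$ but opposite $x$ modulo $3$. So exactly one of $x,\bar x$ satisfies $x\equiv Q_M(-d,c)/(9m)\pmod 3$.
\item For that one, $m\equiv 1\pmod 3$ together with $\tfrac{Q_M(-d,c)}{9m}\,Q_M(-b,a)\equiv -1\pmod 3$ gives $3\nmid Q_M(-b,a)+\tfrac{t-\alpha}{3}$.
\end{itemize}
That last condition is in fact equivalent to the previous one, so singling it out was not wrong. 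But the halving has to be proved across such pairs of $x$, not inside a fibre, and ``I would verify this carefully'' leaves exactly this step open.
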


\begin{proof}
    Let $X = \mattwo{a}{b}{c}{d} \in \Gamma(1)$, then
    $$
     X M X^{-1} = \mattwo{\alpha}{-Q_M(-b,a)}{Q_M(-d,c)}{t-\alpha}, 
    $$
    where $\alpha$ satisfies
    \begin{equation} \label{eq: min poly alpha}
    \alpha^2 - t \alpha + mp^v = Q_M(-d, c) Q_M(-b, a).
    \end{equation}
    By Lemma~\ref{lem:congruence conditions for trace} above, $XMX^{-1} \in \Gamma w_m V_p$ if and only if 
    \begin{align}
    N \mid Q_M(-d,c), \quad  mp^v \mid \alpha - \frac{Q_M(-d,c)}{p}, \quad & p \nmid Q_M(-b,a) + \frac{t - \alpha}{p}, \\
    \left (\alpha - \frac{Q_M(-d,c)}{p}, \frac{N}{mp^v} \right) = 1, \quad & (Q_M(-b,a), m) = 1.
    \end{align}
    Assume $X \in \calS_N(M, \Gamma W_{m} V_p)$. Then $N \mid Q_M(-d,c)$ and $u \mid Q_M(-b,a)$, hence 
    $$
    \alpha^2 - t \alpha + mp^v \equiv 0 \bmod Nu.
    $$
    Since $mp^{v-1} \mid Q_M(-d,c)/p$, we must have $mp^{v-1} \mid \alpha$, and write $\alpha = mp^{v-1} \alpha_0$, 
    giving us a map $\tau : C_N(M, \Sigma) \to S_N(u,t)$ sending $X \mapsto \alpha_0$, where
    $$
    S_N(u,t) = \{ \alpha_0 \in (\Z / (\tfrac{N}{mp^{v-1}}) \Z) : mp^{v-2} \alpha_0^2 - \tfrac{t}{p} \alpha_0 + 1 \equiv 0 \bmod \tfrac{N}{mp^v} u\}.
    $$
    Moreover, $mp^{v-1} \mid \alpha$ implies that $mp^{2v-2} \mid \alpha^2 - t\alpha $. 
    If $v \ge 3$, then $2v-2 > v$, so that 
    $$ 
    Q_M(-d, c) Q_M(-b, a) = \alpha^2 - t \alpha + mp^v \equiv mp^v \bmod mp^{v+1}, 
    $$
    showing that $p^{v+1} \nmid Q_M(-d,c)$. Since $p^v \mid \alpha - Q_M(-d,c)/p$, it follows that $p^v \nmid \alpha$, or equivalently $p \nmid \alpha_0$. Moreover, even if $v = 2$ and $p \mid \alpha_0$, we still get $p^4 \mid \alpha^2 - t \alpha$, hence $p \nmid \alpha_0$.
    Furthermore, as $(\alpha_0 - Q_M(-d,c) / p, N/m) = 1$, and $N/m \mid Q_M(-d,c)/p$, it follows that $(\alpha_0, N/m) = 1$, hence $(\alpha_0, \tfrac{N}{mp^{v-1}}) = 1$. 
    
    Conversely, given $\alpha_0$, we recover $\alpha$. 
    Note that if $(u, mp^v) \ne 1$, then $S_N(u,t) = \emptyset$ in all cases, so we may also assume that $(u, mp^v) = 1$.
    
    From \eqref{eq: min poly alpha}, by taking the valuation at $p$ on both sides, we get $v = v_p(Q_M(-d,c)) + v_p(Q_M(-b,a))$, and since $v \le v_p(Q_M(-d,c))$, it follows that $p \nmid Q_M(-b,a)$ and $p^{v+1} \nmid Q_M(-d,c)$.  

    If $p = 2$ (and then $v \ge 3$), then this shows $mp^v \mid \alpha - \tfrac{Q_M(-d,c)}{p}$. 
    Since $v \ge 3$, $p^2 \mid t - \alpha$, and $p \nmid Q_M(-b,a)$, we also get $p \nmid Q_M(-b,a) - \tfrac{t - \alpha}{p}$.
    Therefore, in this case, every $(c:d) \in S_{\alpha}(N,u)$ gives rise to $X \in \calS_N(\Gamma, \Sigma)$. The previous lemma now gives the result.

    If $p = 3$ and $v = 2$, then every $\alpha_0 \in S_N(u,t)$ has a conjugate $\overline{\alpha}_0 \in S_N(u,t)$ satisfying $3 \mid \tfrac{t}{p} = \alpha_0 + \overline{\alpha}_0$. Therefore, exactly one of $\alpha_0, \overline{\alpha}_0$ satisfies $p \mid \alpha_0 - \tfrac{Q_M(-d,c)}{mp^v}$, hence $mp^v \mid \alpha - \frac{Q_M(-d,c)}{p}$. 

    Finally, since $m \equiv 1 \bmod 3$, dividing \eqref{eq: min poly alpha} by $3m$ and considering it modulo $3$, using that $3 \mid \alpha_0 - Q_M(-d,c)/3m$, we get that $\alpha_0^2 + 1 \equiv \alpha_0 Q_M(-b,a) \bmod 3$, hence $Q_M(-b,a) \equiv -\alpha_0 \bmod 3$, so that $3 \nmid Q_M(-b,a) + \alpha / 3$.
    Thus, in this case, we get a single solution for every conjugate pair.
\end{proof}

\end{document}